\newtheorem{theorem}{Theorem}[section]
\newtheorem{claim}[theorem]{Claim}
\newtheorem{proposition}[theorem]{Proposition}
\newtheorem{corollary}[theorem]{Corollary}
\newtheorem{lemma}[theorem]{Lemma}
\theoremstyle{definition}
\newtheorem{remark}[theorem]{Remark}
\newtheorem{definition}[theorem]{Definition}
\newtheorem*{definition*}{Definition}
\newcommand{\IN}{\mathbb N}
\newcommand{\F}{\mathcal F}
\newcommand{\U}{\mathcal U}
\newcommand{\C}{\mathcal C}
\newcommand{\E}{\mathcal E}
\newcommand{\e}{\varepsilon}
\newcommand{\w}{\omega}
\newcommand{\Fil}{\mathrm{\varphi}}
\newcommand{\Ra}{\Rightarrow}
\newcommand{\Haus}{\mathsf{T_{\!2}S}}
\newcommand{\Zero}{\mathsf{T_{\!z}S}}
\newcommand{\Tau}{\mathcal T}
\newcommand{\korin}[2]{\!\sqrt[#1]{\!#2}}
\newcommand{\TS}{\mathcal T_{\mathcal D}}
\newcommand*{\defeq}{\stackrel{\mathsmaller{\mathsf{def}}}{=}}
\newcommand{\Lambdae}{\Lambda^{\!e}}
\title{Injectively and absolutely $\mathsf{T_{\!1}S}$-closed semigroups}
\author{Taras Banakh}
\address{T.Banakh: Ivan Franko National University of Lviv (Ukraine) and Jan Kochanowski University in Kielce (Poland)}
\email{t.o.banakh@gmail.com}
\subjclass[2020]{22A15, 20M18, 54B30, 54D35, 54H11, 54H12}
\keywords{commutative semigroup, semilattice, group,  $\C$-closed semigroup, injectively $\C$-closed semigroup, absolutely $\C$-closed semigroup}
\begin{document}
\begin{abstract}  A semigroup $X$ is  {\em absolutely} (resp. {\em injectively}) $\mathsf{T_{\!1}S}$-{\em closed} if for any (injective) homomorphism $h:X\to Y$ to a $T_1$ topological semigroup $Y$, the image $h[X]$ is closed in $Y$. We prove that a commutative semigroup $X$ is injectively $\mathsf{T_{\!1}S}$-closed  if and only if $X$ is bounded, nonsingular and Clifford-finite. Using this characterization, we prove that (1) every injectively $\mathsf{T_{\!1}S}$-closed semigroup has injectively $\mathsf{T_{\!1}S}$-closed center, and (2) every absolutely $\mathsf{T_{\!1}S}$-closed semigroup has finite center. As a by-product of the proof we elaborate the technique of topologization of semigroups by remote bases.
\end{abstract}
\maketitle

\section{Introduction and Main Results}

In many cases,  completeness properties of various objects of General Topology or  Topological Algebra can be characterized externally as closedness in ambient objects. For example, a metric space $X$ is complete if and only if $X$ is closed in any metric space containing $X$ as a subspace. A uniform space $X$ is complete if and only if $X$ is closed in any uniform space containing $X$ as a uniform subspace. A topological group $G$ is Ra\u\i kov complete  if and only if it is closed in any topological group containing $G$ as a subgroup.

On the other hand, for topological semigroups there are no reasonable notions of (inner) completeness. Nonetheless we can define many completeness properties of semigroups via their closedness in ambient topological semigroups.

A {\em topological semigroup} is a topological space $X$ endowed
with a continuous associative binary operation $X\times X\to
X$, $(x,y)\mapsto xy$.

\begin{definition*} Let $\C$ be a class of topological semigroups.
A topological
semigroup $X$ is called
\begin{itemize}
\item {\em $\C$-closed} if for any isomorphic topological
embedding $h:X\to Y$ to a topological semigroup $Y\in\C$
the image $h[X]$ is closed in $Y$;
\item {\em injectively $\C$-closed} if for any injective continuous homomorphism $h:X\to Y$ to a topological semigroup $Y\in\C$ the image $h[X]$ is closed in $Y$;
\item {\em absolutely $\C$-closed} if for any continuous homomorphism $h:X\to Y$ to a topological semigroup $Y\in\C$ the image $h[X]$ is closed in $Y$.
\end{itemize}
\end{definition*}

For any topological semigroup we have the implications:
$$\mbox{absolutely $\C$-closed $\Ra$ injectively $\C$-closed $\Ra$ $\C$-closed}.$$

\begin{definition*} A semigroup $X$ is defined to be ({\em injectively, absolutely}) {\em $\C$-closed\/} if so is $X$ endowed with the discrete topology.
\end{definition*}

We will be interested in the (absolute, injective) $\C$-closedness for the classes:
\begin{itemize}
\item $\mathsf{T_{\!1}S}$ of topological semigroups satisfying the separation axiom $T_1$;
\item $\Haus$ of Hausdorff topological semigroups;
\item $\Zero$ of Tychonoff zero-dimensional topological
semigroups.
\end{itemize}
A topological space satisfies the separation axiom $T_1$ if all its finite subsets are closed.
A topological space is {\em zero-dimensional} if it has a base of
the topology consisting of {\em clopen} (=~closed-and-open) sets.

Since $\Zero\subseteq\Haus\subseteq\mathsf{T_{\!1}S}$, for every semigroup we have the implications:
$$
\xymatrix{
\mbox{absolutely $\mathsf{T_{\!1}S}$-closed}\ar@{=>}[r]\ar@{=>}[d]&\mbox{absolutely $\mathsf{T_{\!2}S}$-closed}\ar@{=>}[r]\ar@{=>}[d]&\mbox{absolutely $\mathsf{T_{\!z}S}$-closed}\ar@{=>}[d]\\
\mbox{injectively $\mathsf{T_{\!1}S}$-closed}\ar@{=>}[r]\ar@{=>}[d]&\mbox{injectively $\mathsf{T_{\!2}S}$-closed}\ar@{=>}[r]\ar@{=>}[d]&\mbox{injectively $\mathsf{T_{\!z}S}$-closed}\ar@{=>}[d]\\
\mbox{$\mathsf{T_{\!1}S}$-closed}\ar@{=>}[r]&\mbox{$\mathsf{T_{\!2}S}$-closed}\ar@{=>}[r]&\mbox{$\mathsf{T_{\!z}S}$-closed.}
}
$$

$\C$-Closed topological groups for various classes $\C$ were investigated by many authors
~\cite{AC,AC1,Ban,DU,G,L}. In particular, the closedness of commutative topological groups in the class of Hausdorff topological semigroups was investigated
in~\cite{Z1,Z2}; $\mathcal{C}$-closed topological semilattices were investigated
in~\cite{BBm, BBh, BBR, GutikPagonRepovs2010, GutikRepovs2008, Stepp69, Stepp75}.
For more information about complete topological semilattices and pospaces, see the  survey~\cite{BBc}.
This paper is a continuation of the papers \cite{CCUS}, \cite{BB}, \cite{BB2}, \cite{GCCS}, \cite{ACS},  providing inner characterizations of various closedness properties of (discrete topological) semigroups. In order to formulate such inner characterizations, let us recall some properties of semigroups.


A semigroup $X$ is called
\begin{itemize}
\item {\em unipotent} if $X$ has a unique idempotent;
\item {\em chain-finite} if any infinite set $I\subseteq X$ contains elements $x,y\in I$ such that $xy\notin\{x,y\}$;
\item {\em singular} if there exists an infinite set $A\subseteq X$ such that $AA$ is a singleton;
\item {\em periodic} if for every $x\in X$ there exists $n\in\IN$ such that $x^n$ is an idempotent;
\item {\em bounded} if there exists $n\in\IN$ such that for every $x\in X$ the $n$-th power $x^n$ is an idempotent;
\item {\em group-finite} if every subgroup of $X$ is finite;
\item {\em group-bounded} if every subgroup of $X$ is bounded.
\end{itemize}
We recall that an element $x$ of a semigroup is an {\em idempotent} if $xx=x$.

The following theorem (proved in \cite{BB}) characterizes $\C$-closed commutative semigroups.

\begin{theorem}[Banakh--Bardyla]\label{t:C-closed} Let $\C$ be a class of topological semigroups such that $\mathsf{T_{\!z}S}\subseteq\C\subseteq \mathsf{T_{\!1}S}$. A commutative semigroup $X$ is $\C$-closed if and only if $X$ is chain-finite, nonsingular,  periodic, and group-bounded.
\end{theorem}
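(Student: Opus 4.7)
The proof splits into necessity and sufficiency. Since $\Zero\subseteq\C\subseteq\mathsf{T_{\!1}S}$, it suffices to construct counterexamples in $\Zero$ (necessity) and to prove closedness in any $\mathsf{T_{\!1}S}$-ambient (sufficiency). I would prove necessity by four separate contrapositives, each producing, when one of the hypotheses fails, a zero-dimensional Hausdorff topological semigroup $Y\supseteq X$ in which $X$ is not closed.

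Each of the four constructions is a one-point extension $Y=X\cup\{\infty\}$. If chain-finiteness fails, commutativity forces the witnessing infinite $I\subseteq X$ to consist of idempotents pairwise comparable in the semilattice order $x\leq y\iff xy=x$; I would adjoin a fresh idempotent $\infty$ as an ultrafilter limit of $I$ along a free ultrafilter $\U$ on $I$, defining $\infty\cdot z$ as the $\U$-limit of $(iz)_{i\in I}$ after refining $\U$ so that these limits stabilise simultaneously for all $z\in X$. If $X$ is singular with $AA=\{e\}$ for infinite $A$, I would take $\infty$ as a cluster point of $A$ (neighborhoods $\{\infty\}\cup(A\setminus F)$ for finite $F$) and set $\infty\cdot z\defeq ez$; joint continuity is automatic because $A\cdot A\subseteq\{e\}$. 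If $X$ is not periodic, some element generates an infinite copy of $(\IN,+)$ inside $X$, and I would use the Bohr compactification of $\IZ$ together with a retract-style extension to embed $X$ non-closedly into a compact topological semigroup. If $X$ is not group-bounded, some unbounded subgroup embeds non-closedly into a compact abelian topological group (the circle $\IT$ suffices for infinite-order elements, a suitable profinite group for unbounded torsion), and this embedding extends to $X$.

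For sufficiency, assume $X$ satisfies all four properties and embeds into $Y\in\mathsf{T_{\!1}S}$ with a putative limit point $y\in\overline{X}\setminus X$; fix a net $(x_\alpha)$ in $X$ converging to $y$. Periodicity attaches to each $x\in X$ an idempotent power $x^{n(x)}$ in the set $E(X)$ of idempotents, and group-boundedness uniformises the group exponents over each maximal subgroup $H_e$. Chain-finiteness then forces the net of idempotents $(x_\alpha^{n(x_\alpha)})$ to stabilise along a cofinal subnet (otherwise one extracts an infinite chain in $E(X)$), and nonsingularity rules out infinite collapsing fibres of the multiplication. From the remaining nontrivial asymptotic behaviour of $(x_\alpha)$ one extracts either an infinite cyclic subsemigroup or an unbounded subgroup inside $X$, contradicting periodicity or group-boundedness; $T_1$-ness of $Y$ then forces $y\in X$.

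The principal obstacle in the necessity direction is ensuring joint (not merely separate) continuity of the extended multiplication on $Y$; natural one-point extensions of discrete semigroups are generically only right-topological, and the ultrafilter refinements and use of compact-group targets are what secure joint continuity. In the sufficiency direction, the delicate point is coherently combining the four hypotheses: the limit $y$ can accumulate contributions from several archimedean components of $X$ simultaneously, so a stratification over the semilattice $E(X)$ of idempotents, followed by a case analysis on where the net concentrates, appears to be the cleanest route.
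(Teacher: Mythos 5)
First, for orientation: the paper does not prove this theorem at all --- it is imported verbatim from \cite{BB} --- so there is no in-paper argument to compare with; I can only judge your outline against the known proof. Your global shape is the natural one (necessity via counterexamples in $\Zero$, sufficiency against arbitrary $T_1$ ambients), and your observation that a witness to non-chain-finiteness in a commutative semigroup is an infinite chain of idempotents is correct. But each of the sub-arguments has a genuine gap, and in every case the gap is exactly the hard point of the real proof. In the necessity direction the problem is uniform: a one-point extension $Y=X\cup\{\infty\}$ forces you to define $\infty\cdot z$ for \emph{every} $z\in X$ and to verify joint continuity at $(\infty,z)$, i.e.\ that the net $(az)_{a\in A}$ eventually enters every neighborhood of $\infty\cdot z$. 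In the singular case this is not ``automatic'': $AA\subseteq\{c\}$ controls only the pair $(\infty,\infty)$, whereas for a fixed $z\in X$ the net $(az)_{a\in A}$ need not stabilize, and since every point of $X$ is isolated and $Y$ is not compact, there is no limit for it to converge to. The same objection defeats ``refining $\U$ so that the limits stabilise simultaneously for all $z\in X$'' (uncountably many constraints, no compact target for $\U$-limits) and the Bohr/profinite constructions for the periodic and group-bounded cases (the adjoined limit points must be multiplied by elements of $X$ lying outside the cyclic part or the subgroup, and those products are undefined). This is precisely why the actual topologization arguments abandon naive one-point extensions in favour of topologies built from shifted bases --- the $\Lambdae(x;U)=\{x\}\cup\big(\tfrac{x}{e}\cdot U\big)$ machinery of Sections~\ref{s:Lambda}--\ref{s:topology} of the present paper, whose whole purpose is to propagate a neighborhood filter at one idempotent to a jointly continuous semigroup topology on all of $X$ --- or adjoin an entire family of filters with a case-split operation.

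The sufficiency direction as written is not yet a proof. In a $T_1$ (not Hausdorff, not regular) topological semigroup, limits of nets are not unique and convergence gives almost no leverage; the step ``from the remaining nontrivial asymptotic behaviour of $(x_\alpha)$ one extracts an infinite cyclic subsemigroup or an unbounded subgroup'' is not something one can carry out, and ``$T_1$-ness of $Y$ then forces $y\in X$'' does not follow from anything you have established. The known route is structural rather than net-theoretic: periodicity makes the map $\pi$ sending $x$ to the unique idempotent in $\korin{\infty}{\,x}\cap E(X)$ a homomorphism onto the chain-finite semilattice $E(X)$ (Lemma~\ref{l:pi-homo}), and one reduces closedness of $X$ to closedness of the semilattice image together with closedness of the unipotent fibres $\pi^{-1}(e)$, which are handled by the unipotent case (cf.\ Theorems~\ref{t:unipotent-C} and \ref{t:unipotent}); this is the same stratification used in the proof of $(1)\Rightarrow(2)$ in Section~\ref{s:main-iT1}. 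Your instinct to stratify over $E(X)$ is the right one, but the actual mechanism is this fibration argument, not a case analysis on where a net concentrates.
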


For unipotent semigroups, Theorem~\ref{t:C-closed} was simplified in \cite{CCUS} as follows.

\begin{theorem}[Banakh--Vovk]\label{t:unipotent-C}  Let $\C$ be a class of topological semigroups such that $\mathsf{T_{\!z}S}\subseteq\C\subseteq \mathsf{T_{\!1}S}$. A unipotent commutative semigroup $X$ is $\C$-closed if and only if $X$ is bounded and nonsingular.
\end{theorem}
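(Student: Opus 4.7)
\medskip

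\noindent\textbf{Proof proposal.}
My plan is to deduce Theorem~\ref{t:unipotent-C} from Theorem~\ref{t:C-closed} by establishing the purely algebraic equivalence, valid for any unipotent commutative semigroup $X$ with unique idempotent $e$:
\[
X\text{ is bounded and nonsingular}\ \Longleftrightarrow\ X\text{ is chain-finite, nonsingular, periodic, and group-bounded.}
\]
Since nonsingularity appears on both sides, the work is in converting between boundedness and the triple (chain-finite, periodic, group-bounded).

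The forward direction $(\Rightarrow)$ is mostly routine. Periodicity is immediate from boundedness. For group-boundedness, any subgroup of $X$ has its identity element among the idempotents of $X$, which by unipotence forces this identity to equal $e$, so the global bound $x^n=e$ caps every subgroup's exponent by $n$. For chain-finiteness, suppose some infinite $I\subseteq X\setminus\{e\}$ satisfies $xy\in\{x,y\}$ for all distinct $x,y\in I$. The relation $x\leq y\Longleftrightarrow xy=y$ then totally orders $I$, producing an infinite monotone sequence, say $x_1<x_2<\cdots$. From $x_1x_j=x_j$ for $j\geq 2$, iteration gives $x_1^kx_j=x_j$ for all $k$, and setting $k=n$ (where $x_1^n=e$) yields $ex_j=x_j$, so $x_j\in eX$ (the group of units) for $j\geq 2$. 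The group identity $x_2x_3=x_3$ then forces $x_2=e$, contradicting $e\notin I$; the decreasing case is symmetric.

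The backward direction $(\Leftarrow)$ is the substantive part: one must derive boundedness from unipotence, periodicity, group-boundedness, and nonsingularity. Writing $H:=eX$ for the group of units, group-boundedness makes $H$ of finite exponent $n_H$, so the cycle period of every cyclic subsemigroup $\langle x\rangle$ divides $n_H$. What remains is to bound the tail length $r_x:=\min\{k\geq 1:x^k\in H\}$. Assuming toward a contradiction that $r_{x_k}\to\infty$ along some sequence $(x_k)$, set $a_k:=x_k^{r_{x_k}-1}$; then $a_k\notin H$ but $a_k^2=x_k^{2(r_{x_k}-1)}\in H$ as soon as $r_{x_k}\geq 2$. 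The plan is to extract from $\{a_k\}$, possibly after passing to a subsequence or to suitable powers, an infinite set $A$ with $AA$ a singleton, contradicting nonsingularity.

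The main obstacle lies in controlling the cross-products $a_ka_j$ for $k\neq j$, which are not determined by the abstract hypotheses alone. Nonsingularity is genuinely necessary in this direction: the semigroup $X_0:=\{0\}\sqcup\bigsqcup_{n\geq 1}\{a_n,a_n^2,\dots,a_n^n\}$ with $a_n^{n+1}=0$ and $a_na_m=0$ for $n\neq m$ is unipotent, commutative, periodic, group-bounded, and chain-finite, yet unbounded, and it fails nonsingularity precisely through the set $\{a_n^n:n\geq 2\}$, which multiplies to $\{0\}$. I expect the actual construction to require the archimedean/ideal-filtration structure of unipotent commutative periodic semigroups in order to extract the singular set from a generic sequence of tail elements.
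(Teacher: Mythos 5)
Your reduction of the theorem to Theorem~\ref{t:C-closed} is a sound strategy, and the forward implication is handled correctly and completely: boundedness gives periodicity and group-boundedness at once (every subgroup's identity must be the unique idempotent $e$), and your order-theoretic argument for chain-finiteness works (in fact periodicity alone suffices there, since any infinite ``chain'' $I$ has all but its least element inside $eX=H_e$, where cancellation kills it). Your example $X_0$ correctly shows that nonsingularity cannot be dropped from the backward implication. The problem is that the backward implication itself --- that a unipotent commutative, periodic, group-bounded, nonsingular semigroup is bounded --- is never proved; you state a plan and then explicitly defer the construction. Since this implication is exactly the content of ``$\C$-closed $\Rightarrow$ bounded'', the substantive half of the theorem remains open in your write-up.

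The gap is not merely one of exposition: the plan as stated does not close. From a single $x$ with large tail length $r=r(x)$ one does get large finite sets with singleton square --- e.g.\ $\{x^i:\ r/2\le i<r,\ i\equiv c \pmod N\}$, whose pairwise products all equal one element of $H_e$ because the cycle part of $x^\IN$ has period dividing the exponent $N$ of $H_e$ --- but passing to an infinite union over a sequence $(x_k)$ with $r(x_k)\to\infty$ leaves the cross-products $x_j^ix_k^{i'}$ completely uncontrolled, exactly as you observe. The missing idea is visible in Lemma~\ref{l:powers} of the present paper: there the elements of the prospective singular set are not powers of unrelated elements but have the form $a_kz_k^{m_k}$ where all the representations $a_kz_k^{n_k}=c$ name one fixed element $c$ with $c^2e=c^2$; the identity $a_iz_i^{m_i}a_kz_k^{m_k}=c^2$ is then \emph{forced}, provided the exponents $m_k$ are chosen in sliding windows $[l_k-k,l_k]$ past the thresholds $\lambda_i$, and nonsingularity is used \emph{twice}: once to bound the auxiliary sets $A_i=\{a_iz_i^{m_i}z:z\in Z(X)\cap\frac ee\}$ (so that the windows are long enough to make the choices) and once at the end to contradict $AA=\{c^2\}$. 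An argument of that type --- anchoring all the tail elements to a common product and inductively selecting exponents --- is what your proof needs and does not supply; without it, or some equivalent device, the ``only if'' direction of the theorem is unproven.
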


Theorem~\ref{t:C-closed} implies that each subsemigroup of a $\C$-closed commutative semigroup is $\C$-closed. On the other hand, quotient semigroups of $\C$-closed commutative semigroups are not necessarily $\C$-closed, see Example 1.8 in \cite{BB}. This motivated the authors of \cite{BB} to introduce the notions of ideally and projectively $\C$-closed semigroups. 

Let us recall that a {\em congruence} on a semigroup $X$ is an equivalence relation $\approx$ on $X$ such that for any elements $x\approx y$ of $X$ and any $a\in X$ we have $ax\approx ay$ and $xa\approx ya$. For any congruence $\approx$ on a semigroup $X$, the quotient set $X/_\approx$ has a unique semigroup structure such that the quotient map $X\to X/_\approx$ is a semigroup homomorphism. The semigroup $X/_\approx$ is called the {\em quotient semigroup} of $X$ by the congruence $\approx$~.
A subset $I$ of a semigroup $X$ is called an {\em ideal} in $X$ if $IX\cup XI\subseteq  I$. Every ideal $I\subseteq X$ determines the congruence $(I\times I)\cup \{(x,y)\in X\times X:x=y\}$ on $X\times X$. The quotient semigroup of $X$ by this congruence is denoted by $X/I$ and called the {\em quotient semigroup} of $X$ by the ideal $I$. If $I=\emptyset$, then the quotient semigroup $X/\emptyset$ can be identified with the semigroup $X$.

\begin{definition*}A semigroup $X$ is called
\begin{itemize}
\item {\em projectively $\C$-closed} if for any congruence $\approx$ on $X$ the quotient semigroup $X/_{\approx}$ is $\C$-closed;
\item {\em ideally $\C$-closed} if for any ideal $I\subseteq X$ the quotient semigroup $X/I$ is $\C$-closed.
\end{itemize}
\end{definition*}

It is easy to see that for every semigroup the following implications hold:
$$\mbox{absolutely $\C$-closed $\Ra$ projectively $\C$-closed $\Ra$ ideally $\C$-closed $\Ra$ $\C$-closed.}$$
It is easy to check that a semigroup $X$ is absolutely $\C$-closed if and only if for any congruence $\approx$ on $X$ the semigroup $X/_\approx$ is injectively $\C$-closed.

For a semigroup $X$, let $E(X)\defeq\{x\in X:xx=x\}$ be the set of idempotents of $X$. For an idempotent $e$ of a semigroup $X$, let $H_e$ be the maximal subgroup of $X$ that contains $e$. The union $H(X)=\bigcup_{e\in E(X)}H_e$ of all subgroups of $X$ is called the {\em Clifford part} of $S$.
A semigroup $X$ is called
\begin{itemize}
\item {\em Clifford}  if $X=H(X)$;
\item {\em Clifford+finite} if $X\setminus H(X)$ is finite;
\item {\em Clifford-finite} if the Clifford part $H(X)$ is finite.
\item {\em Clifford-singular} if there exists an infinite set $A\subseteq X\setminus H(X)$ such that $AA\subseteq H(X)$.
\end{itemize}

Ideally and projectively $\C$-closed commutative semigroups were characterized in  \cite{BB} as follows.

\begin{theorem}[Banakh--Bardyla]\label{t:mainP} Let $\C$ be a class of topological semigroups such that $\mathsf{T_{\!z}S}\subseteq\C\subseteq \mathsf{T_{\!1}S}$. For a commutative semigroup $X$ the following conditions are equivalent:
\begin{enumerate}
\item $X$ is projectively $\C$-closed;
\item $X$ is ideally $\C$-closed;
\item the semigroup $X$ is chain-finite, group-bounded and Clifford+finite.
\end{enumerate}
\end{theorem}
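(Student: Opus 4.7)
The plan is to prove the cycle $(1)\Ra(2)\Ra(3)\Ra(1)$. The implication $(1)\Ra(2)$ is immediate, since every ideal $I\subseteq X$ induces a Rees congruence whose quotient is exactly $X/I$.

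For $(2)\Ra(3)$, applying the hypothesis to $I=\emptyset$ shows that $X$ itself is $\C$-closed, so Theorem~\ref{t:C-closed} yields chain-finiteness, nonsingularity, periodicity, and group-boundedness of $X$. It remains to derive Clifford+finiteness. I would argue by contradiction: assuming $X\setminus H(X)$ is infinite, I would construct an ideal $I\subseteq X$ so that $X/I$ violates some condition of Theorem~\ref{t:C-closed}. Concretely, using periodicity of $X$, one should extract an infinite subset $A\subseteq X\setminus H(X)$ whose pairwise products $A\cdot A$ all fall into a single ``target'' region of $H(X)$; choosing $I$ to absorb that target region turns the image of $A$ into an infinite singular set in $X/I$, which contradicts ideal $\C$-closedness. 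The delicate point is selecting $A$ while respecting chain-finiteness of $X$, which heavily constrains the multiplicative structure on infinite subsets of $X\setminus H(X)$.

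For $(3)\Ra(1)$, fix a congruence $\approx$ and let $f\colon X\to Y=X/{\approx}$ be the quotient map; I would verify that $Y$ satisfies the four conditions of Theorem~\ref{t:C-closed}. Three are comparatively easy. First, $Y$ is Clifford+finite, since $f(H(X))\subseteq H(Y)$ implies $Y\setminus H(Y)\subseteq f(X\setminus H(X))$. Second, $Y$ is nonsingular: an infinite $A\subseteq Y$ with $AA=\{c\}$ would have $A\cap H(Y)$ infinite, and the relation $a^2=c$ forces every such $a$ to lie in the unique subgroup of $Y$ containing $c$, contradicting cancellation in a group. Third, $Y$ is periodic, because iterating any $y\in Y$ enters $H(Y)$ within at most $|Y\setminus H(Y)|+1$ steps, after which group-boundedness of $Y$ supplies an idempotent power. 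The deeper conditions are chain-finiteness and group-boundedness of $Y$. For chain-finiteness, I would reduce an infinite ``chain'' (infinite $I\subseteq Y$ with $xy\in\{x,y\}$ for all $x,y\in I$) to an infinite chain in $E(Y)$, lift it to idempotents $\{e_i\}\subseteq E(X)$ via periodicity, and apply a Ramsey-type trichotomy on the pairwise products $e_ie_j\in\{e_i,e_j\}$ to distill an infinite chain in $E(X)$, contradicting chain-finiteness of $X$. For group-boundedness of each $H_{\bar e}\subseteq Y$, I would lift $\bar x\in H_{\bar e}$ to $x\in X$, bound the transient index of $\langle x\rangle$ uniformly by $|X\setminus H(X)|+1$, and then use the bound of the relevant maximal subgroup of $X$ to produce a common exponent; some care is required to handle the dependence of that subgroup on $x$.

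The principal obstacle is $(2)\Ra(3)$: manufacturing an ideal whose Rees quotient exhibits singularity (or some other Theorem~\ref{t:C-closed} obstruction) demands a combinatorial argument that simultaneously exploits periodicity, chain-finiteness, and the assumed infinity of $X\setminus H(X)$. A secondary difficulty in $(3)\Ra(1)$ is the group-boundedness step, which needs extra work when the set $E(X)\cap f^{-1}(\bar e)$ is infinite and the exponents of the corresponding maximal subgroups of $X$ are non-uniform.
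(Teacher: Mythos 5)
First, a point of reference: this paper does not prove Theorem~\ref{t:mainP} at all --- it is imported from \cite{BB} --- so there is no internal proof to compare yours against; your proposal has to stand on its own. Its architecture $(1)\Ra(2)\Ra(3)\Ra(1)$ is the right one, and several pieces are sound: $(1)\Ra(2)$ via Rees congruences, the reduction of $(2)\Ra(3)$ to Clifford+finiteness by applying Theorem~\ref{t:C-closed} to $X=X/\emptyset$, and, in $(3)\Ra(1)$, the verifications that quotients are Clifford+finite, nonsingular (the cancellation argument works because $AA=\{c\}$ contains all products $ab$, not just squares) and periodic. But the three places you yourself flag as delicate are exactly where the content of the theorem lives, and your sketches there do not close.

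Concretely. (a) In $(2)\Ra(3)$ you must exhibit an ideal $I$ and an infinite $A\subseteq X\setminus I$ with $AA\subseteq I$. The natural ideal is $I=X^1H_e\cup(X\setminus\pi^{-1}[{\uparrow}e])$, which by Lemma~\ref{l:C-ideal} satisfies $I\cap\korin{\infty}{e}=H_e$; but since chain-finiteness, periodicity and group-boundedness all pass automatically to Rees quotients, the \emph{only} condition of Theorem~\ref{t:C-closed} that can newly fail is nonsingularity, so the whole burden is to manufacture an infinite $A\subseteq X\setminus H(X)$ with $AA\subseteq H(X)$ from the mere infinitude of $X\setminus H(X)$ together with the four properties of $X$. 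That combinatorial step (essentially Lemma~7.5 of \cite{BB}, which the present paper quotes in Section~\ref{s:main-aT1}) is entirely missing from your sketch. (b) Your Ramsey trichotomy for chain-finiteness of $Y=X/_\approx$ does not finish: the homogeneous case in which $e_ie_j\notin\{e_i,e_j\}$ for all pairs is perfectly consistent with chain-finiteness of $E(X)$ and yields no contradiction, even though the images form a chain. What does work is a telescoping argument: replace the lifts $e_i$ by the partial products $e_1e_2\cdots e_i$ (for a descending chain in $E(Y)$), or by the stabilized tails $e_ie_{i+1}\cdots e_{n}$, $n\gg i$ (for an ascending one); these form a genuine chain in $E(X)$ with pairwise distinct images. (c) For group-boundedness of $H_{\bar e}\subseteq Y$ the missing idea is that $E(X)\cap f^{-1}(\bar e)$ is a subsemilattice of the chain-finite semilattice $E(X)$ and hence has a least element $m$; multiplying a lift $x$ of $\bar x$ by $m$ pushes a fixed power of $x$ into the single bounded group $H_m$, and the exponent of $H_m$ then bounds all of $H_{\bar e}$ uniformly. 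Without (a)--(c) supplied, the proposal is a plan rather than a proof.
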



\begin{definition*}
Let $\C$ be a class of topological semigroups.
A semigroup $X$ is called
\begin{itemize}
\item {\em $\C$-discrete} (or else {\em $\C$-nontopologizable}) if for any injective homomorphism $h:X\to Y$ to a topological semigroup $Y\in\C$ the image $h[X]$ is a discrete subspace of $Y$;
\item {\em $\C$-topologizable} if $X$ is not $\C$-discrete;
\item {\em projectively $\C$-discrete} if for every homomorphism $h:X\to Y$ to a topological semigroup $Y\in\C$ the image $h[X]$ is a discrete subspace of $Y$.
\end{itemize}
\end{definition*}

The study of topologizable and nontopologizable semigroups is a classical topic in Topological Algebra that traces its history back to Markov's problem \cite{Markov} of topologizability of infinite groups, which was resolved in \cite{Shelah}, \cite{Hesse} and \cite{Ol} by constructing examples of nontopologizable infinite groups. For some other results on topologizability of semigroups, see \cite{BGP,BM,BPS,DS2,vD,DI,GLS,Gutik,KOO,Kotov,Taimanov}. 


For a semigroup $X$ let
$$Z(X)\defeq\{z\in X:\forall x\in X\;\;(xz=zx)\}$$be the {\em center} of $X$.
The first statement of following theorem is proved in Lemmas 5.1, 5.3, 5.4 of \cite{BB} and the second and third statements are taken from Theorem~1.7 in \cite{GCCS}.

\begin{theorem}[Banakh--Bardyla]\label{t:center} Let $X$ be a semigroup.
\begin{enumerate}
\item If $X$ is $\mathsf{T_{\!z}S}$-closed, then the semigroup $Z(X)$ is chain-finite, periodic and nonsingular.
\item  If $X$ is injectively $\mathsf{T_{\!z}S}$-closed or $\mathsf{T_{\!z}S}$-discrete, then $Z(X)$ is group-finite.
\item  If $X$ is ideally $\mathsf{T_{\!z}S}$-closed, then $Z(X)$ is group-bounded.
\end{enumerate}
\end{theorem}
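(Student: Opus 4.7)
The plan is to prove each of the three statements by contraposition: assuming $Z(X)$ fails the listed property, construct a zero-dimensional topological semigroup $Y\supseteq X$ (or a suitable quotient/embedding) witnessing that $X$ fails the corresponding closedness/discreteness property. Since all three conditions involve $\Zero$, the key technical engine is the ``remote bases'' topologization technique advertised in the abstract. The central point that makes things go through is that a subset $A\subseteq Z(X)$ commutes with everything in $X$, so topologies generated by using $A$ as a local base at a new limit point $*$ are automatically compatible with left and right multiplications in $X$.

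For statement (1), I would split into the three properties. If $Z(X)$ is not chain-finite, take an infinite $I\subseteq Z(X)$ with $xy\in\{x,y\}$ for all $x,y\in I$; a Ramsey argument upgrades $I$ to an infinite ``chain-like'' subsemigroup, and I adjoin a new point $*\notin X$ with neighborhood base $\{I\setminus F:F\text{ finite}\}\cup\{*\}$. I would define $*\cdot x = x\cdot * = *$ (or a fixed idempotent, depending on the chain type) and verify, using that $I\subseteq Z(X)$, that the resulting topology on $X\cup\{*\}$ is a zero-dimensional topological semigroup in which $X$ fails to be closed. If $Z(X)$ contains a non-periodic element $z$, then $\langle z\rangle\subseteq Z(X)$ is infinite monogenic and we attach a limit $*$ the same way. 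If $Z(X)$ is singular, say $A\subseteq Z(X)$ infinite with $AA=\{c\}$, we attach $*$ with $*\cdot x = x\cdot *=c$ for all $x$, which is continuous precisely because of centrality.

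For statement (2), if $Z(X)$ contains an infinite subgroup $H$, then $H$ is abelian (being in the center) and admits a nondiscrete zero-dimensional group topology $\tau$ (e.g., via a nontrivial character into a profinite group). I would extend $\tau$ to a zero-dimensional topology on $X$ by taking as neighborhood base at each $x\in X$ the sets $xU$ with $U\in\tau$ a neighborhood of the identity of $H$; again centrality of $H$ guarantees continuity of multiplication. This yields a nondiscrete injective homomorphism from the discrete $X$ into a zero-dimensional topological semigroup, contradicting $\Zero$-discreteness; a parallel construction using a completion/compactification of $H$ gives an injective embedding with non-closed image, contradicting injective $\Zero$-closedness.

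For statement (3), if $Z(X)$ contains an unbounded subgroup $H$, I would pick a homomorphism $H\to G$ onto an infinite cyclic or quasicyclic group $G$ and extend it to a congruence $\approx$ on $X$ by declaring $x\approx y$ iff $x=y$ or $x,y\in H$ and they are identified by the chosen quotient; the quotient $X/_{\approx}$ contains $G$ as a central subgroup, and one then produces a nonclosed ideal quotient of $X/_\approx$ using the standard Zelenyuk-type topologization of the infinite abelian group $G$. The main obstacle I expect is verifying that the remote-base topologies are genuinely zero-dimensional topological semigroups (Tychonoff property, joint continuity of multiplication at the new limit points), which is exactly where the centrality hypothesis $A\subseteq Z(X)$ is used in an essential way.
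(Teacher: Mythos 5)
First, a point of reference: the paper does not actually prove Theorem~\ref{t:center}. It is imported wholesale, with part (1) credited to Lemmas 5.1, 5.3 and 5.4 of \cite{BB} and parts (2) and (3) to Theorem 1.7 of \cite{GCCS}, so there is no in-paper argument to match your proposal against and it must be judged on its own. Your overall framework --- argue contrapositively and topologize, exploiting centrality so that neighbourhood products slide past fixed elements --- is the right one, but each of your three constructions breaks at a concrete step.

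In (1), for the singularity case you adjoin a single limit point $*$ to an infinite $A\subseteq Z(X)$ with $AA=\{c\}$ and set $*x=x*=c$. Continuity of multiplication at $(*,x)$ then forces $ax=c$ for all but finitely many $a\in A$, because $c$ is isolated and every neighbourhood of $*$ contains a cofinite subset of $A$; but all you actually know is $(Ax)(Ax)=\{cx^2\}$, and $Ax$ may be infinite. The same defect --- what is $*\cdot x$ for an arbitrary $x\in X$, and why should $a\mapsto ax$ converge in a one-point extension? --- kills the chain-finiteness and periodicity cases as well: for an infinite central chain $(e_n)$ or an infinite monogenic $z^{\IN}$ there is no reason for $e_nx$ or $z^nx$ to stabilize or to escape to $*$. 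This is exactly why the real constructions adjoin a whole family of limit filters $x\F$, one per relevant translate (compare the machinery of Sections 3--5 of the paper). In (2), the translate topology with base $\{xU\}$ need not be Hausdorff, since left multiplication by $x$ need not be injective on $H$, so $xU$ and $yV$ can meet for $x\ne y$ no matter how small $U,V$ are, while membership in $\Zero$ requires a Tychonoff zero-dimensional topology. In (3), the relation you define ($x\approx y$ iff $x=y$ or $x,y\in H$ are identified by a chosen quotient of $H$) is not a congruence --- $x\approx y$ does not yield $ax\approx ay$ once $ax,ay\notin H$ --- and even if it were, ideal $\Zero$-closedness only controls quotients by ideals, and a central subgroup is not an ideal; moreover an unbounded abelian group such as $\bigoplus_p\IZ/p\IZ$ surjects onto neither an infinite cyclic nor a quasicyclic group, so your initial reduction already fails.
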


Injectively $\C$-closed commutative unipotent semigroups were characterzed in \cite{CCUS} as follows.

\begin{theorem}[Banakh--Vovk]\label{t:unipotent} Let $\C$ be a class of topological semigroups such that $\mathsf{T_{\!z}S}\subseteq\C\subseteq \mathsf{T_{\!1}S}$.
 For a commutative unipotent semigroup $X$ the following conditions are equivalent:
\begin{enumerate}
\item $X$ is injectively $\C$-closed;
\item $X$ is $\C$-closed and group-finite;
\item $X$ is bounded, nonsingular and group-finite.
\end{enumerate}
\end{theorem}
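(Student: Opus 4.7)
The implication $(1)\Rightarrow(2)$ is straightforward: injectively $\C$-closed trivially implies $\C$-closed (by specialising to isomorphic topological embeddings), while the inclusion $\Zero\subseteq\C$ shows that $X$ is injectively $\Zero$-closed, so Theorem~\ref{t:center}(2) forces the centre $Z(X)$ to be group-finite; commutativity of $X$ gives $Z(X)=X$. The equivalence $(2)\Leftrightarrow(3)$ is then immediate from Theorem~\ref{t:unipotent-C}: for a commutative unipotent semigroup, being $\C$-closed is equivalent to being bounded and nonsingular, so adjoining ``group-finite'' to both sides yields the equivalence.

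For the main direction $(3)\Rightarrow(1)$, fix $X$ satisfying (3) with unique idempotent $e$, finite maximal subgroup $H:=H_e$, and exponent $n$ (so $x^n=e$ for every $x\in X$). Let $h:X\to Y$ be an injective homomorphism with $Y\in\C\subseteq\mathsf{T_{\!1}S}$, and take $y\in\overline{h[X]}$; we aim to show $y\in h[X]$. The $T_1$ property makes $\{h(e)\}$ closed, hence so is $\{z\in Y:z^n=h(e)\}$; combined with $h[X]^n=\{h(e)\}$ this yields $y^n=h(e)$. In any bounded unipotent semigroup one checks that $Xe=H$: for $x\in X$, the element $xe$ satisfies $(xe)e=xe$ and $(xe)^n=e$, so $xe$ is a unit of the monoid $eX$ and thus lies in $H_e$. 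Consequently $h[X]\cdot h(e)=h[H]$, a finite (hence closed) subset of the $T_1$ space $Y$. Continuity of right-multiplication by $h(e)$ then gives $y\cdot h(e)\in\overline{h[H]}=h[H]$, so $y\cdot h(e)=h(g)$ for a uniquely determined $g\in H$.

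Now pick a net $(x_\alpha)$ of distinct elements in $X$ with $h(x_\alpha)\to y$. Then $h(x_\alpha e)=h(x_\alpha)h(e)\to h(g)$, and since the finite set $h[H]$ is discrete as a subspace of the $T_1$ space $Y$, we have $x_\alpha e=g$ eventually. After truncating the net, every $x_\alpha$ lies in the fibre $X_g:=\pi^{-1}(g)$ of the homomorphic retraction $\pi:X\to H$, $\pi(x)=xe$. If $y\in h[X]$ we are done, so assume $y\notin h[X]$; this forces the $x_\alpha$ to be infinitely many distinct elements of $X_g$.

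The main obstacle is to convert this topological information into a combinatorial contradiction with nonsingularity. The strategy is to analyse iterated products: the $h$-images of $x_\alpha x_\beta$ converge to $y^2$, and $y^2\cdot h(e)=h(g^2)\in h[H]$, so by the same pigeonhole step the product $x_\alpha x_\beta$ eventually lies in the single fibre $X_{g^2}$. Iterating this analysis along the products $x_{\alpha_1}\cdots x_{\alpha_k}$ for $k\le n$, and exploiting the bound ($x^n=e$ caps the nilpotent depth) together with the finiteness of $H$ (which gives only finitely many possible residues at each level), a careful pigeonhole extracts an infinite subset $A\subseteq\{x_\alpha\}$ whose product set $AA$ collapses to a single element of $X$, contradicting nonsingularity. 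Making this pigeonhole precise---in particular, controlling the net arguments so that the algebraic collapse actually occurs on an infinite index set---is where the real work lies; the surrounding reductions are cleanup.
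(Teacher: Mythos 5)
This theorem is quoted in the paper from \cite{CCUS} and is not reproved here, so there is no internal proof to compare against; I can only assess your argument on its own terms. Your treatment of $(1)\Rightarrow(2)$ and $(2)\Leftrightarrow(3)$ is correct and is essentially the only reasonable route (Theorem~\ref{t:center}(2) plus $Z(X)=X$, and Theorem~\ref{t:unipotent-C} respectively). Your opening reductions for $(3)\Rightarrow(1)$ are also sound: $y^n=h(e)$ because $\{z\in Y:z^n=h(e)\}$ is closed; $Xe=H_e$ because $xe$ is a unit of the monoid $Xe$; hence $y\cdot h(e)\in h[H_e]$, and since $h[H_e]$ is finite in a $T_1$ space you may pass to a net of distinct points $x_\alpha$ lying in a single fibre $X_g=\{x:xe=g\}$.

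The genuine gap is that the entire remaining content of $(3)\Rightarrow(1)$ — producing an infinite set $A$ with $AA$ a singleton — is left as a declared ``strategy,'' and you yourself flag that making the pigeonhole precise ``is where the real work lies.'' That work is the theorem; everything before it is routine. Moreover, the sketch as stated does not obviously converge to a proof. The one step where you successfully extracted exact equalities from convergence ($x_\alpha e=g$ eventually) worked only because the values $h(x_\alpha e)$ ranged over the \emph{finite} set $h[H_e]$, which is closed and discrete in a $T_1$ space. For the products $x_\alpha x_\beta$ the situation is different: the observation that they lie in $X_{g^2}$ is purely algebraic and carries no topological information, and $X_{g^2}$ is in general infinite, so knowing $h(x_\alpha x_\beta)\to y^2$ in a merely $T_1$ (non-Hausdorff, non-unique-limit) space does not let you conclude that the products stabilize or take few values. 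No finiteness is available at that level, so ``the same pigeonhole step'' does not apply, and it is precisely this obstruction that forces the actual proof in \cite{CCUS} to be a genuinely delicate construction of a singular set rather than an iteration of the fibre argument. As written, the proposal does not prove the implication $(3)\Rightarrow(1)$.
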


The principal results of this paper are the following two theorems describing the center of an injectively (and absolutely) $\mathsf{T_{\!1}S}$-closed semigroup and also characterizing injectively (and absolutely) $\mathsf{T_{\!1}S}$-closed commutative semigroups.

\begin{theorem}\label{t:main-iT1} For a semigroup $X$ consider the following conditions:
\begin{enumerate}
\item $X$ is commutative, bounded, nonsingular and Clifford-finite;
\item $X$ is injectively $\mathsf{T_{\!1}S}$-closed;
\item $X$ is $\mathsf{T_{\!1}S}$-closed and $\mathsf{T_{\!1}S}$-discrete;
\item $X$ is $\mathsf{T_{\!z}S}$-closed and $\mathsf{T_{\!z}S}$-discrete;
\item $X$ is $\mathsf{T_{\!z}S}$-closed and $Z(X)$ is Clifford-finite;
\item $Z(X)$ is bounded, nonsingular and Clifford-finite;
\item $Z(X)$ is injectively $\mathsf{T_{\!1}S}$-closed.
\end{enumerate}
Then $(1)\Ra(2)\Leftrightarrow(3)\Ra(4)\Ra(5)\Ra(6)\Leftrightarrow(7)$.\\ If $X$ is commutative, then the conditions $(1)$--$(7)$ are equivalent.
\end{theorem}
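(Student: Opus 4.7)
The plan is to establish the cycle $(1)\Ra(2)\Leftrightarrow(3)\Ra(4)\Ra(5)\Ra(6)\Leftrightarrow(7)$, after which the equivalence of all seven conditions in the commutative case follows from $X=Z(X)$. For $(1)\Ra(2)$, I reduce to the unipotent case handled by Theorem~\ref{t:unipotent}. Since $X$ is commutative and bounded, there is a single $n\in\IN$ with $x^n$ idempotent for every $x$; commutativity then makes $x\mapsto x^n$ a homomorphism to $E(X)$, whose fibres give the archimedean decomposition $X=\bigsqcup_{e\in E(X)}X_e$ with $X_e=\{x\in X:x^n=e\}$. Clifford-finiteness forces $E(X)\subseteq H(X)$ to be finite, so this is a finite decomposition. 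Each $X_e$ is unipotent, commutative, bounded (with the same $n$), nonsingular as a subsemigroup, and group-finite because $H_e\subseteq H(X)$ is finite. Theorem~\ref{t:unipotent} then declares each $X_e$ injectively $\mathsf{T_{\!1}S}$-closed. For an arbitrary injective homomorphism $h:X\to Y\in\mathsf{T_{\!1}S}$, each restriction $h|_{X_e}$ is injective so $h[X_e]$ is closed in $Y$; thus $h[X]=\bigcup_{e\in E(X)}h[X_e]$ is a finite union of closed sets, hence closed.

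The easy direction $(3)\Ra(2)$ follows because $\mathsf{T_{\!1}S}$-discreteness forces $h[X]$ to be discrete (so $h$ becomes an isomorphic topological embedding of the discrete $X$), whereupon $\mathsf{T_{\!1}S}$-closedness yields closedness of $h[X]$. The reverse $(2)\Ra(3)$ is more delicate: the $\mathsf{T_{\!1}S}$-closed half is trivial, but to force $\mathsf{T_{\!1}S}$-discreteness I start with an injective homomorphism $h:X\to Y\in\mathsf{T_{\!1}S}$ whose image has a non-isolated point $y_0=h(x_0)$, and construct a larger $T_1$ topological semigroup $\tilde Y$ by duplicating $y_0$ into a ``ghost'' copy that still limits onto $y_0$; the induced injective homomorphism $\tilde h:X\to\tilde Y$ misses the ghost and its image is therefore not closed, contradicting $(2)$. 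This construction is the first technical crux, and it is where the remote-base topologization technique advertised in the abstract enters. The implication $(3)\Ra(4)$ is automatic from $\mathsf{T_{\!z}S}\subseteq\mathsf{T_{\!1}S}$, under which both closedness and discreteness pass to their $\mathsf{T_{\!z}S}$-analogues.

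For $(4)\Ra(5)$, Theorem~\ref{t:center}(1) supplies that $Z(X)$ is chain-finite, periodic, and nonsingular, while Theorem~\ref{t:center}(2) supplies group-finiteness. The remaining step, finiteness of $E(Z(X))$ (equivalent to Clifford-finiteness given group-finiteness), is the second technical crux: chain-finiteness forces any infinite set of central idempotents to contain an infinite antichain, which one then fashions via a suitable remote-base filter into a non-discrete $\mathsf{T_{\!z}S}$-topology on $X$, contradicting $\mathsf{T_{\!z}S}$-discreteness. For $(5)\Ra(6)$, Clifford-finiteness of $Z(X)$ renders it group-bounded (its subgroups sit inside the finite $H(Z(X))$), and chain-finite plus periodic plus nonsingular plus group-bounded makes $Z(X)$ itself $\mathsf{T_{\!z}S}$-closed by Theorem~\ref{t:C-closed}. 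Each archimedean component of $Z(X)$ is a subsemigroup, hence again $\mathsf{T_{\!z}S}$-closed by Theorem~\ref{t:C-closed}, and it is unipotent, commutative, and nonsingular, so Theorem~\ref{t:unipotent-C} makes it bounded. Since $E(Z(X))$ is finite, the maximum of the component bounds serves as a global bound on $Z(X)$, yielding boundedness; nonsingularity and Clifford-finiteness are already in place.

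The equivalence $(6)\Leftrightarrow(7)$ arises by applying the already-proved portion of the theorem to the commutative semigroup $Z(X)$ in place of $X$, using $Z(Z(X))=Z(X)$: $(1)\Ra(2)$ for $Z(X)$ gives $(6)\Ra(7)$, and the cycle $(2)\Ra(3)\Ra(4)\Ra(5)\Ra(6)$ for $Z(X)$ gives $(7)\Ra(6)$. When $X$ is itself commutative, $X=Z(X)$ identifies $(1)$ with $(6)$, closing the equivalence of all seven conditions. The two places I expect real work are $(2)\Ra(3)$ and the idempotent-finiteness step in $(4)\Ra(5)$; both ask me to construct a witnessing topology on an ambient semigroup, and both will rely on the remote-base topologization machinery developed in the body of the paper.
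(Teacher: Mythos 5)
Your proposal follows essentially the same route as the paper: the same decomposition of $X$ into unipotent fibres of $\pi$ for $(1)\Ra(2)$ (reducing to Theorem~\ref{t:unipotent}), the same appeal to Theorem~\ref{t:center} plus the topologizability result (Theorem~\ref{t:topologizable}) to kill an infinite chain-finite $EZ(X)$ in $(4)\Ra(5)$, the same use of Theorems~\ref{t:C-closed} and \ref{t:unipotent-C} on the unipotent components of $Z(X)$ for $(5)\Ra(6)$, and the same bootstrap through $Z(Z(X))=Z(X)$ for $(6)\Leftrightarrow(7)$; the only divergence is that the paper simply cites Proposition~3.2 of \cite{BB2} for $(2)\Leftrightarrow(3)$, whereas you sketch the (correct) ghost-point duplication argument yourself. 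Two minor slips worth fixing: the global exponent bounding $Z(X)$ should be a common multiple of the component exponents rather than their maximum (if $x^{n}=e$ then $x^{kn}=e$, but $x^{n+1}$ need not be idempotent), and the remote-base machinery is needed only in the $(4)\Ra(5)$ step, not in the duplication construction for $(2)\Ra(3)$.
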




\begin{theorem}\label{t:main-aT1} For a  semigroup $X$, consider the following conditions:
\begin{enumerate}
\item $X$ is finite;
\item $X$ is absolutely $\mathsf{T_{\!1}S}$-closed;
\item $X$ is projectively $\mathsf{T_{\!1}S}$-closed and projectively $\mathsf{T_{\!1}S}$-discrete;
\item $X$ is projectively $\mathsf{T_{\!1}S}$-closed and injectively $\mathsf{T_{\!1}S}$-closed;
\item $X$ is projectively $\mathsf{T_{\!1}S}$-closed and $\mathsf{T_{\!1}S}$-discrete;
\item $X$ is ideally $\mathsf{T_{\!z}S}$-closed and $\mathsf{T_{\!z}S}$-discrete;
\item $X$ is ideally $\mathsf{T_{\!z}S}$-closed and the semigroup $Z(X)$ is Clifford-finite;
\item $Z(X)$ is finite;
\item $Z(X)$ is absolutely $\mathsf{T_{\!1}S}$-closed.
\end{enumerate}
Then $(1)\Ra(2)\Leftrightarrow(3)\Ra(4)\Leftrightarrow(5)\Ra(6)\Ra(7)\Ra(8)\Leftrightarrow(9)$.\\ If $X$ is commutative, then the conditions $(1)$--$(9)$ are equivalent.
\end{theorem}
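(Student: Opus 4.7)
The plan is to traverse the chain of implications in the order stated, reducing each link wherever possible to the cited Theorems~\ref{t:main-iT1}, \ref{t:mainP} and~\ref{t:center}, and to handle the commutative case uniformly at the end by observing that $Z(X)=X$ collapses several of the conditions. The \emph{routine links} come first. The implication $(1)\Ra(2)$ is immediate since the image of a finite discrete semigroup under any continuous homomorphism into a $T_1$-space is finite, hence closed. The equivalences $(2)\Leftrightarrow(3)$ and $(4)\Leftrightarrow(5)$, together with $(3)\Ra(4)$, all follow from the observation (made just before the theorem statement) that $X$ is absolutely $\C$-closed iff every quotient $X/_{\approx}$ is injectively $\C$-closed, combined with the equivalence $(2)\Leftrightarrow(3)$ of Theorem~\ref{t:main-iT1}. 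The implication $(5)\Ra(6)$ is routine from $\mathsf{T_{\!z}S}\subseteq\mathsf{T_{\!1}S}$ and the general fact that projective-closedness implies ideal-closedness. Finally, $(8)\Ra(9)$ is clear.

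The \emph{core implications} $(6)\Ra(7)$ and $(7)\Ra(8)$ form the heart of the proof. From ideal $\mathsf{T_{\!z}S}$-closedness of $X$, together with Theorem~\ref{t:center}(1) and~(3), one already knows that $Z(X)$ is chain-finite, periodic, nonsingular and group-bounded. For $(6)\Ra(7)$, adding $\mathsf{T_{\!z}S}$-discreteness gives, via Theorem~\ref{t:center}(2), that $Z(X)$ is group-finite; hence every maximal subgroup $H_e\subseteq Z(X)$ is finite, and Clifford-finiteness of $Z(X)$ reduces to showing that the idempotent set $E(Z(X))$ is finite. If it were infinite, one would extract from the chain-finite commutative semilattice $E(Z(X))$ a suitable infinite subsemilattice and then use the paper's topologization-by-remote-bases technique (announced in the abstract) to endow $X$ with a non-discrete Tychonoff zero-dimensional semigroup topology, contradicting $\mathsf{T_{\!z}S}$-discreteness of $X$. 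For $(7)\Ra(8)$, we already have $|H(Z(X))|<\infty$, so it remains to show that $|Z(X)\setminus H(Z(X))|<\infty$; if this complement were infinite, an analogous remote-basis construction would produce a Rees ideal $I\subseteq X$ for which $X/I$ admits a $\mathsf{T_{\!z}S}$ topology under which the image of $X\setminus I$ fails to be closed, contradicting ideal $\mathsf{T_{\!z}S}$-closedness of $X$.

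Finally, the \emph{commutative case} follows automatically: when $X$ is commutative, $Z(X)=X$, so condition~$(8)$ reads ``$X$ is finite'' $=(1)$ and the forward chain $(1)\Ra\cdots\Ra(8)$ closes into a cycle of equivalences, yielding $(1)\Leftrightarrow\cdots\Leftrightarrow(9)$. The remaining general implication $(9)\Ra(8)$ is then obtained by applying the already-established implication $(2)\Ra(8)$ to the commutative semigroup $Z(X)$, noting that $Z(Z(X))=Z(X)$: absolute $\mathsf{T_{\!1}S}$-closedness of $Z(X)$ forces $Z(X)$ itself to be finite. The \emph{main obstacle} is concentrated in $(6)\Ra(7)$ and $(7)\Ra(8)$: in each case one must convert an infinite ``bad configuration'' inside $Z(X)$ into a non-discrete $\mathsf{T_{\!z}S}$ semigroup topology on $X$ (respectively, on a Rees quotient of $X$); this is precisely what the remote-basis topologization machinery developed in the body of the paper is designed to deliver, and is not supplied by Theorems~\ref{t:main-iT1}, \ref{t:mainP} or~\ref{t:center} alone.
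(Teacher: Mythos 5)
Your overall architecture coincides with the paper's: the routine links are handled the same way (your derivation of $(2)\Leftrightarrow(3)$ from the quotient characterization of absolute closedness plus Theorem~\ref{t:main-iT1}$(2)\Leftrightarrow(3)$ is a legitimate substitute for the paper's citation of \cite{BB2}), your $(6)\Ra(7)$ is exactly the paper's route (it is the implication $(4)\Ra(5)$ of Theorem~\ref{t:main-iT1}, whose proof combines Theorem~\ref{t:center} with Theorem~\ref{t:topologizable} applied to a chain-finite infinite $EZ(X)$), and your treatment of $(8)\Leftrightarrow(9)$ and of the commutative case matches the paper.

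The genuine gap is in $(7)\Ra(8)$. Having reduced the problem to showing that $Z(X)\setminus H(Z(X))$ is finite, you assert that if it were infinite, an ``analogous remote-basis construction'' would produce a Rees ideal $I\subseteq X$ for which $X/I$ carries a $\mathsf{T_{\!z}S}$ topology violating closedness. No such construction is given, and the remote-basis machinery of Sections~\ref{s:Lambda}--\ref{s:topology} does not deliver it: that machinery produces a non-discrete topology only under the hypothesis that $EZ(X)$ is chain-finite \emph{and infinite} (Proposition~\ref{p:nonisolated}, Theorem~\ref{t:topologizable}), whereas at this stage $EZ(X)$ is already known to be finite and the infinitude is concentrated in a single fibre $\pi_Z^{-1}(e)\setminus H_e$ over a fixed idempotent $e$. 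The paper closes this step by an entirely different device: since $X$ is ideally $\mathsf{T_{\!z}S}$-closed and $H_e\cap Z(X)$ is finite, Lemma~7.5 of \cite{BB} yields directly that $\pi_Z^{-1}(e)\setminus H_e$ is finite for each $e\in EZ(X)$, whence $Z(X)=\bigcup_{e\in EZ(X)}\pi_Z^{-1}(e)$ is finite. Without that external lemma, or an actual substitute construction handling an infinite non-Clifford part over finitely many idempotents, your chain breaks at this link.
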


\begin{remark} The equivalences $(2)\Leftrightarrow(3)$ in Theorems~\ref{t:main-iT1} and \ref{t:main-aT1} were proved in Propositions 3.2 and 3.3 of \cite{BB2}. For viable semigroups the implication $(2)\Ra(8)$ of Theorem~\ref{t:main-aT1} was proved in Theorem 1.14 of \cite{ACS}.
\end{remark}

Theorems~\ref{t:main-iT1} and \ref{t:main-aT1} imply that the injective (and absolute) $\mathsf{T_{\!1}S}$-closedness is preserved by subsemigroups of commutative semigroups.

\begin{corollary}\label{c:her} Any subsemgroup of an injectively (and absolutely) $\mathsf{T_{\!1}S}$-closed commutative semigroup is injectively (and absolutely) $\C$-closed.
\end{corollary}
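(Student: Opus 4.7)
The plan is to read the corollary directly off the inner characterizations established in Theorems~\ref{t:main-iT1} and~\ref{t:main-aT1}. For the injective case, the equivalence $(1)\Leftrightarrow(2)$ of Theorem~\ref{t:main-iT1} (valid for any commutative semigroup) reduces injective $\mathsf{T_{\!1}S}$-closedness to the conjunction of three structural properties: bounded, nonsingular, and Clifford-finite. Given a subsemigroup $Y$ of such a commutative $X$, I only need to observe that each of these three properties is inherited by $Y$, and then apply the same equivalence in the reverse direction (noting that commutativity is trivially inherited).

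The first two inheritances are immediate. A common exponent $n\in\IN$ making $x^n$ idempotent for all $x\in X$ works in particular for every $y\in Y$, so $Y$ is bounded. Likewise, any infinite $A\subseteq Y$ with $AA$ a singleton would already witness the singularity of $X$, so nonsingularity of $X$ descends to $Y$. Clifford-finiteness requires one small observation, namely $H(Y)\subseteq H(X)$: if $y$ lies in the maximal group $H_e^Y$ of $Y$ at an idempotent $e\in E(Y)\subseteq E(X)$, then $y$ has an inverse relative to $e$ inside $Y\subseteq X$, which places $y$ in the maximal subgroup $H_e^X$ of $X$ at $e$. Hence $H(Y)\subseteq H(X)$, so $H(Y)$ is finite.

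For the absolute case, Theorem~\ref{t:main-aT1} shows that absolute $\mathsf{T_{\!1}S}$-closedness of a commutative semigroup is equivalent to finiteness (via the chain $(1)\Ra(2)\Ra\cdots\Ra(8)$, together with $Z(X)=X$ in the commutative setting). Finiteness is trivially inherited by subsemigroups, so this case is even more transparent than the injective one.

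No real obstacle arises: all the substantive work has already been done in proving Theorems~\ref{t:main-iT1} and~\ref{t:main-aT1}, and the corollary is a direct structural consequence. The only point needing a line of argument is the inclusion $H(Y)\subseteq H(X)$, which is straightforward from the definition of the maximal subgroup at an idempotent.
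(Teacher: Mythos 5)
Your proposal is correct and is essentially the argument the paper intends: the corollary is stated as a direct consequence of the characterizations in Theorems~\ref{t:main-iT1} and \ref{t:main-aT1}, and you correctly verify that boundedness, nonsingularity and Clifford-finiteness (via $H(Y)\subseteq H(X)$, which follows since the maximal subgroup of $Y$ at an idempotent $e$ is a subgroup of $X$ with identity $e$ and hence lies in $H_e^X$) pass to subsemigroups, while the absolute case reduces to inheritance of finiteness.
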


\begin{remark} Corollary~\ref{c:her} is specific for commutative semigroups and does not generalize to noncommutative groups: by Theorem 1.10 in \cite{BB2},  every countable bounded group $G$ without elements of order 2 is a subgroup of an absolutely $\mathsf{T_{\!1}S}$-closed countable simple bounded group $X$. If the group $G$ has infinite center, then $G$ is not injectively $\mathsf{T_{\!1}S}$-closed by Theorem~\ref{t:main-iT1}. On the other hand, $G$ is a subgroup of the absolutely $\mathsf{T_{\!1}S}$-closed group $X$. This example also shows that the equivalences $(1)\Leftrightarrow(2)$ in Theorems~\ref{t:main-iT1} and \ref{t:main-aT1} do not hold for non-commutative groups.
\end{remark}

Theorems~\ref{t:main-iT1} and \ref{t:main-aT1} will be proved in Sections~\ref{s:main-iT1} and \ref{s:main-aT1}. The main instrument in the proof of Theorem~\ref{t:main-iT1} is Theorem~\ref{t:topologizable} on $\mathsf{T_{\!z}S}$-topologizability of semigroups $X$ whose central semilattice $EZ(X)=E(X)\cap Z(X)$ is chain-finite and infinite. This topologizability theorem is proved using semigroup topologies, generated by remote bases. The corresponding technique is elaborated in Sections~\ref{s:Lambda}--\ref{s:topology}. The obtained topologizability results have an independent value and are essentially used in the paper \cite{ICVS}, containing the following characterization of injectively $\C$-closed commutative semigroups.

\begin{theorem}[Banakh] Let $\C$ be a class of topological semigroups such that $\mathsf{T_{\!z}S}\subseteq\C\subseteq\mathsf{T_{\!2}S}$. A commutative semigroup $X$ is injectively $\C$-closed if and only if $X$ is chain-finite, group-finite, bounded, nonsingular and not Clifford-singular.
\end{theorem}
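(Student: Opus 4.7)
My plan follows the pattern set by Theorem~\ref{t:main-iT1}: the weaker class $\C\subseteq\mathsf{T_{\!2}S}$ allows the characterizing condition to be weakened from ``Clifford-finite'' to the triad ``chain-finite, group-finite, not Clifford-singular''. I attack each direction separately.

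\textbf{Necessity.} Suppose $X$ is injectively $\C$-closed. Because $\mathsf{T_{\!z}S}\subseteq\C$, $X$ is also $\C$-closed and injectively $\mathsf{T_{\!z}S}$-closed. Theorem~\ref{t:C-closed} then yields chain-finiteness, nonsingularity, periodicity and group-boundedness, while Theorem~\ref{t:center}(2) (with $Z(X)=X$) yields group-finiteness. For boundedness I would use the remote-basis topologization machinery of Sections~\ref{s:Lambda}--\ref{s:topology} to rule out an infinite $E(X)$: were $E(X)$ infinite, it would coincide with the chain-finite central semilattice $EZ(X)$, and Theorem~\ref{t:topologizable} would topologize $X$ in a way incompatible with $\mathsf{T_{\!z}S}$-closedness; hence $E(X)$ is finite, each maximal subgroup $H_e$ is finite, and the exponents of the $H_e$ combine to a uniform one. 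To obtain ``not Clifford-singular'' I argue by contrapositive: given an infinite $A\subseteq X\setminus H(X)$ with $AA\subseteq H(X)$, I use the remote-basis framework again to construct a zero-dimensional Hausdorff semigroup topology on $X\cup\{\infty\}$ making $A$ converge to $\infty$. The inclusion $AA\subseteq H(X)$ keeps all products of $A$-elements inside a fixed finite set, so continuity at $(\infty,\infty)$, $(x,\infty)$ and $(\infty,x)$ can be arranged simultaneously; the embedding $X\hookrightarrow X\cup\{\infty\}$ then contradicts injective $\C$-closedness.

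\textbf{Sufficiency.} Given the five properties, take an injective continuous homomorphism $h:X\to Y$ with $Y\in\C\subseteq\mathsf{T_{\!2}S}$ and suppose for contradiction that some $y\in\overline{h[X]}\setminus h[X]$ exists. Choose a net $(x_\alpha)$ with $h(x_\alpha)\to y$ and pass to a subnet whose members all lie in $H(X)$ or all in $X\setminus H(X)$. The first case is dispatched by boundedness, group-finiteness and finiteness of $E(X)$: the $x_\alpha$ lie in finitely many finite subgroups, so a constant subnet gives $y\in h[X]$, a contradiction. In the second case, $A:=\{x_\alpha\}$ is infinite in $X\setminus H(X)$; ``not Clifford-singular'' supplies some $x_\alpha x_\beta\notin H(X)$, nonsingularity forbids the products from being eventually constant, and Hausdorffness of $Y$ together with continuity of multiplication (so that $h(x_\alpha x_\beta)\to y^2$) and chain-finiteness force an infinite antichain of products in $X\setminus H(X)$, contradicting chain-finiteness.

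\textbf{Main obstacle.} The crux is the Hausdorff topologization witnessing failure of injective $\C$-closedness when $X$ is Clifford-singular. Unlike the $T_1$ case underlying Theorem~\ref{t:main-iT1}, Hausdorffness forces distinct points of $X$ to have disjoint open neighborhoods while $\infty$ accumulates precisely along $A$; threading this needle while keeping multiplication continuous at the awkward points $(\infty,x)$ and $(\infty,\infty)$ requires the full strength of the remote-basis technology from Sections~\ref{s:Lambda}--\ref{s:topology}, and is precisely why this theorem is deferred to the companion paper~\cite{ICVS}.
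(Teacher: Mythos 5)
This theorem is only quoted in the present paper (it is imported from the companion preprint \cite{ICVS}), so there is no proof here to compare against; judging your proposal on its own terms, it contains a decisive logical error in the necessity direction. You argue that if $E(X)$ were infinite then Theorem~\ref{t:topologizable} would make $X$ $\mathsf{T_{\!z}S}$-topologizable, ``incompatible with $\mathsf{T_{\!z}S}$-closedness,'' and conclude that $E(X)$ is finite. But topologizability contradicts $\C$-\emph{discreteness}, not $\C$-closedness: a non-discrete image can perfectly well be closed. The equivalence ``injectively closed $\Leftrightarrow$ closed $+$ discrete'' is special to the class $\mathsf{T_{\!1}S}$ (Proposition 3.2 of \cite{BB2}, used for Theorem~\ref{t:main-iT1}); it fails for $\mathsf{T_{\!2}S}$, and that failure is exactly why the $\mathsf{T_{\!2}S}$-characterization replaces ``Clifford-finite'' by the weaker triple of conditions. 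Concretely, your conclusion contradicts the very theorem you are proving: an infinite antichain with an adjoined zero is a chain-finite, group-finite, bounded, nonsingular, non-Clifford-singular semilattice, hence injectively $\C$-closed by the statement, yet it has infinite $E(X)$ and is $\mathsf{T_{\!z}S}$-topologizable by Theorem~\ref{t:topologizable}. This sinks your derivation of boundedness, and it also undermines the sufficiency argument, where you again invoke ``finiteness of $E(X)$'' to dispatch the case of a net inside $H(X)$, and where you claim $AA\subseteq H(X)$ confines products to a ``fixed finite set'' ($H(X)$ need not be finite here).

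Beyond that, the two genuinely hard steps are only gestured at. For ``not Clifford-singular'' you must build a Hausdorff zero-dimensional semigroup topology on $X\cup\{\infty\}$ (or on a semigroup of filters) in which an infinite $A\subseteq X\setminus H(X)$ with $AA\subseteq H(X)$ accumulates at $\infty$, and verifying associativity and joint continuity at $(\infty,x)$, $(x,\infty)$, $(\infty,\infty)$ is where all the work lies; the remote-base machinery of Sections~\ref{s:Lambda}--\ref{s:topology} produces topologies on $X$ itself and does not hand you this extension. Likewise the sufficiency direction needs an actual argument (the appeal to ``an infinite antichain of products contradicting chain-finiteness'' is not a proof: chain-finiteness forbids infinite chains, not infinite antichains). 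As written, the proposal establishes only the parts that follow directly from Theorems~\ref{t:C-closed} and \ref{t:center}, and the rest is either incorrect or deferred.
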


\section{Preliminaries}\label{s:prelim}

We denote by $\w$ the set of finite ordinals, by $\IN\defeq\w\setminus\{0\}$ the set of positive integer numbers. 
For a set $X$ we denote by $[X]^{<\w}$  the family of all finite subsets of $X$.

A {\em poset} is a set $X$ endowed with a partial order $\le$. For an element $a$ of a poset $X$, let  
$${\downarrow}a\defeq\{x\in X :x\le a\}\quad\mbox{and}\quad{\uparrow}a\defeq\{x\in X:a\le x\}$$
be the {\em lower} and {\em upper sets} of $a$ in $X$, respectively. 

For a subset $A$ of a poset $X$, let 
$${\downarrow}A\defeq\bigcup_{a\in A}{\downarrow}a\quad\mbox{and}\quad
{\uparrow}A\defeq\bigcup_{a\in A}{\uparrow}a
$$be the {\em lower} and {\em upper sets} of $A$ in the poset $X$.

For two elements $x,y$ of a poset $X$ we write $x<y$ if $x\le y$ and $x\ne y$.

A subset $A$ of a poset $X$ is called a {\em chain} if for any $x,y\in A$ either $x\le y$ or $y\le x$.

A poset $X$ is called
\begin{itemize}
\item {\em chain-finite} if each chain in $X$ is finite;
\item {\em well-founded} if every nonempty set $A\subseteq X$ contains an element $a\in A$ such that $A\cap{\downarrow}a=\{a\}$.
\end{itemize}
It is easy to see that each chain-finite poset is well-founded.

Let $X$ be a semigroup and $E(X)\defeq\{x\in X:xx=x\}$ be the set of idempotents of $X$. We shall consider $E(X)$ as a poset endowed with the {\em natural partial order} $\le$ defined by $x\le y$ iff $xy=yx=x$. Observe that for a (commutative) semigroup $X$ the poset $E(X)$ is chain-finite if (and only if) the semigroup $X$ is chain-finite.

For any infinite set $X$ endowed with the left zero multiplication $xy=x$, the poset $E(X)=X$ is chain-finite but the semigroup $X$ is not chain-finite.



An element $z$ of a semigroup $X$ is called {\em central} if $z\in Z(X)\defeq\{z\in X:\forall x\in X\;\;(zx=xz)\}$. The intersection $$EZ(X)\defeq E(X)\cap Z(X)=E(Z(X))$$is called the {\em central semilattice} of $X$.

For an element $a$ of a semigroup $X$, the set
$$H_a\defeq\{x\in X:(xX^1=aX^1)\;\wedge\;(X^1x=X^1a)\}$$
is called the {\em $\mathcal H$-class} of $a$.
Here $X^1\defeq X\cup\{1\}$ where $1$ is an element such that $1x=x=x1$ for all $x\in X^1$. By Corollary 2.2.6 \cite{Howie}, for every idempotent $e\in E(X)$ its $\mathcal H$-class $H_e$ coincides with the maximal subgroup of $X$, containing the idempotent $e$.
The union
$$H(X)\defeq \bigcup_{e\in E(X)}H_e$$is called the {\em Clifford part} of $X$.
The Clifford part is not necessarily a subsemigroup of $X$.

On the other hand,
the {\em central Clifford part}
$$H_Z(X)\defeq \bigcup_{e\in EZ(X)}H_e$$is a subsemigroup of $X$.

\begin{lemma}\label{l:HZ} For every semigroup $X$ the central Clifford part $H_Z(X)$ is a subsemigroup of $X$.
\end{lemma}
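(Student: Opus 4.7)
The plan is to show directly that for $a,b \in H_Z(X)$ the product $ab$ again lies in some $H_g$ with $g \in EZ(X)$, where $g$ will be built out of the idempotents associated to $a$ and $b$. Write $a \in H_e$ and $b \in H_f$ with $e,f \in EZ(X)$, and let $a^{-1} \in H_e$, $b^{-1} \in H_f$ denote the group inverses, so that $ae=ea=a$, $aa^{-1}=a^{-1}a=e$, $bf=fb=b$, $bb^{-1}=b^{-1}b=f$. Set $g \defeq ef$. Since $e,f \in Z(X)$ the element $g$ is central, and $gg = efef = eeff = ef = g$, so $g \in EZ(X)$.

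Next I would verify that $g$ acts as a two-sided identity on $ab$. Using only that $e$ and $f$ are central, I would compute
\[
(ab)g \;=\; abef \;=\; a(be)f \;=\; a(eb)f \;=\; (ae)(bf) \;=\; ab,
\]
and symmetrically $g(ab) = efab = e(fa)b = e(af)b = (ea)(fb) = ab$.

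Finally, to place $ab$ inside the maximal subgroup $H_g$, I would exhibit a two-sided inverse relative to $g$. The natural candidate is $b^{-1}a^{-1}$, and centrality of $e$ and $f$ again collapses the computation:
\[
(ab)(b^{-1}a^{-1}) \;=\; a(bb^{-1})a^{-1} \;=\; afa^{-1} \;=\; f(aa^{-1}) \;=\; fe \;=\; g,
\]
and likewise $(b^{-1}a^{-1})(ab) = b^{-1}(a^{-1}a)b = b^{-1}eb = e(b^{-1}b) = ef = g$. Combined with $g(ab)=(ab)g=ab$, the element $ab$ lies in the maximal subgroup of $X$ at $g$, i.e.\ $ab \in H_g \subseteq H_Z(X)$.

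There is essentially no obstacle here; the only point requiring care is the systematic use of centrality to commute $e$ past $b$ (and $f$ past $a$) in the two computations above, which is exactly what fails for a general idempotent and explains why the full Clifford part $H(X)$ need not be a subsemigroup, while the central one $H_Z(X)$ is.
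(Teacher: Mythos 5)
Your proof is correct. All the computations check out: centrality of $e$ and $f$ gives $g=ef=fe\in EZ(X)$, and your identities $(ab)g=g(ab)=ab$ and $(ab)(b^{-1}a^{-1})=(b^{-1}a^{-1})(ab)=g$ are valid. The final step --- concluding $ab\in H_g$ from these --- rests on the standard fact that $H_g$ is the group of units of the local monoid $gXg$, equivalently the maximal subgroup of $X$ with identity $g$; this is exactly the content of Corollary 2.2.6 of Howie, which the paper invokes when setting up $H_e$, so nothing is missing. The route is genuinely (if mildly) different from the paper's: the paper certifies $xy\in H_{fe}$ by verifying the defining ideal equalities of the $\mathcal H$-class directly, computing $xyX^1=xfX^1=fxX^1=feX^1$ and $X^1xy=X^1fe$, and never touches inverses; you instead exhibit the explicit inverse $b^{-1}a^{-1}$ and check that $g$ is a two-sided identity for $ab$. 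The paper's argument is shorter and works straight from the definition of $H_a$ used in Section~\ref{s:prelim}; yours is more elementary in the sense that it avoids Green's-relations bookkeeping, and it yields the extra information $(ab)^{-1}=b^{-1}a^{-1}$ for free, which is essentially the conclusion of Lemma~\ref{l:inverse} specialized to the case where the relevant idempotents are central rather than the elements themselves commuting. Your closing remark correctly identifies centrality of the idempotents as the precise point where the argument would break for the full Clifford part $H(X)$.
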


\begin{proof} Given any $x,y\in H_Z(X)$, find central idempotents $e,f\in EZ(X)$ such that $x\in H_e$ and $y\in H_f$. Since the idempotents $e,f$ are central, the product $fe$ is a central idempotent in $X$. Observe that
$$xyX^1=xfX^1=fxX^1=feX^1\quad\mbox{and}\quad X^1xy=X^1ey=X^1ye=X^1fe,$$which means that $xy\in H_{fe}\subseteq H_Z(X)$.
\end{proof}

For any element $x\in H(X)$, there exists a unique element $x^{-1}\in H(X)$ such that $$xx^{-1}x=x,\quad x^{-1}xx^{-1}=x^{-1},\quad\mbox{and}\quad xx^{-1}=x^{-1}x.$$

\begin{lemma}\label{l:ZH} Let $X$ be a semigroup. If for some $e\in E(X)$ the intersection $H_e\cap Z(X)$ is not empty, then $Z(X)\cap H_e$ is a subgroup of  $H_e$ and $e\in Z(X)$.
\end{lemma}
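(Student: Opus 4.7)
The plan is to first establish that $e\in Z(X)$, and then verify the subgroup axioms for $Z(X)\cap H_e$ inside the group $H_e$.

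For the first step, I pick $z\in H_e\cap Z(X)$ and let $z^{-1}\in H_e$ be its group inverse, so $zz^{-1}=z^{-1}z=e$ and $ez=ze=z$, $ez^{-1}=z^{-1}e=z^{-1}$. For any $x\in X$, the centrality of $z$ rewrites $e$ in two ways:
$$ex=z^{-1}zx=z^{-1}xz\qquad\text{and}\qquad xe=xzz^{-1}=zxz^{-1}.$$
Multiplying the first equation on the right by $e$ and using $ze=z$ yields $exe=z^{-1}xz=ex$; multiplying the second on the left by $e$ and using $ez=z$ yields $exe=zxz^{-1}=xe$. Hence $ex=exe=xe$ for every $x\in X$, so $e\in Z(X)$.

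With $e$ central, the subgroup verification becomes routine. The set $Z(X)\cap H_e$ contains $e$ and is closed under multiplication, since a product of two central elements is central and $H_e$ is a subgroup of $X$. For inverses, fix $a\in Z(X)\cap H_e$ and $x\in X$; multiplying the identity $ax=xa$ on the left by $a^{-1}$ and then on the right by $a^{-1}$ gives $a^{-1}xe=exa^{-1}$. Since $e\in Z(X)$ (from the first step) and $e$ is the identity of $H_e$, both sides collapse: $a^{-1}xe=a^{-1}(ex)=(a^{-1}e)x=a^{-1}x$ and $exa^{-1}=(xe)a^{-1}=x(ea^{-1})=xa^{-1}$. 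Thus $a^{-1}x=xa^{-1}$, so $a^{-1}\in Z(X)$.

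The only substantive step is the first one: centrality of $e$ is not immediate from centrality of the single element $z$, and is forced by sandwiching $ex$ and $xe$ with $e$ on either side to recognise both as the same conjugate of $x$. Once $e$ is known to be central, everything else follows by elementary manipulation using that $e$ is the identity of $H_e$ and that the center is closed under products.
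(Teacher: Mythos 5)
Your proof is correct and follows essentially the same route as the paper's: first force $e\in Z(X)$ by writing $ex$ and $xe$ as conjugates of $x$ by the central element $z$ and sandwiching with $e$ to get $ex=exe=xe$, then use the centrality of $e$ to transfer centrality from $a$ to $a^{-1}$. The algebraic manipulations differ only cosmetically from those in the paper.
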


\begin{proof} It is clear that $Z(X)\cap H_e$ is a subsemigroup of $H_e$. It remains to prove that $z^{-1}\in Z(X)\cap H_e$ for any $z\in Z(X)\cap H_e$. Given any $z\in Z(X)\cap H_e$ and $x\in X$, we have $zx=xz$ and hence $exz=ezx=zx=xz$ and $zx=xz=xze=zxe$. Multiplying the equalities $zx=zxe$ and $exz=xz$ by $z^{-1}$, we obtain $ex=z^{-1}zx=z^{-1}zxe=exe=exzz^{-1}=xzz^{-1}=xe$ and hence $e\in Z(X)$.

Multiplying the equality $xz=zx$ by $z^{-1}$ from the left, we obtain $z^{-1}xz=z^{-1}zx=ex=xe=xz^{-1}z$. The equality $z^{-1}xz=xz^{-1}z$ implies
$$z^{-1}x=z^{-1}ex=z^{-1}xe=z^{-1}xzz^{-1}=xz^{-1}zz^{-1}=xz^{-1},$$which means that $z^{-1}\in Z(X)$.
\end{proof}

\begin{lemma}\label{l:inverse} Let $X$ be a semigroup and $x,y\in H(X)$. If $xy=yx$, then $xy\in H(X)$ and $(xy)^{-1}=x^{-1}y^{-1}=y^{-1}x^{-1}$.
\end{lemma}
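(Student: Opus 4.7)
The plan is to show that $xy$ lies in the maximal subgroup $H_{ef}$ of $X$, where $e,f\in E(X)$ are the unique idempotents with $x\in H_e$ and $y\in H_f$, and that $y^{-1}x^{-1}$ (which will coincide with $x^{-1}y^{-1}$) serves as its inverse. The key reduction is to prove that the six elements $e,f,x^{\pm1},y^{\pm1}$ pairwise commute; once this is in hand, $ef=fe$ is automatically an idempotent and the group relations for the candidate pair $(xy,\,y^{-1}x^{-1})$ follow by routine bracketing.

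The first step is to prove $ey=ye$ using only $xy=yx$ and the absorption identities $ex=xe=x$. Left-multiplying $xy=yx$ by $x^{-1}$ gives $ey=x^{-1}yx$; right-multiplying this by $e$ and using $xe=x$ yields $eye=x^{-1}yxe=x^{-1}yx=ey$. Dually, right-multiplying $xy=yx$ by $x^{-1}$ gives $ye=xyx^{-1}$, and left-multiplying by $e$ with $ex=x$ yields $eye=ye$. Hence $ey=eye=ye$. Swapping the roles of $x$ and $y$ gives $fx=xf$.

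Next I would show $xy^{-1}=y^{-1}x$ by squeezing $xy=yx$ between two copies of $y^{-1}$: this yields $y^{-1}xf=fxy^{-1}$, where the left-hand side simplifies via $xf=fx$ and $y^{-1}f=y^{-1}$ to $y^{-1}x$, and the right-hand side simplifies via $xf=fx$ and $fy^{-1}=y^{-1}$ to $xy^{-1}$. The same argument with $x$ and $y$ interchanged gives $x^{-1}y=yx^{-1}$, and applying it once more to the relation $xy^{-1}=y^{-1}x$ (with $y^{-1}$ playing the role of $y$) gives $x^{-1}y^{-1}=y^{-1}x^{-1}$. Since $e,f,x^{\pm1},y^{\pm1}$ now commute pairwise, $ef=fe$, and $(ef)^2=e(fe)f=e(ef)f=eeff=ef$, so $ef$ is an idempotent.

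With the commutations secured, I verify the group relations: $(xy)(y^{-1}x^{-1})=xfx^{-1}=fxx^{-1}=fe$ using $xf=fx$, and $(y^{-1}x^{-1})(xy)=y^{-1}ey=y^{-1}ye=fe$ using $ey=ye$; meanwhile $(fe)(xy)=f(exy)=fxy=xfy=xy$ and symmetrically $(fe)(y^{-1}x^{-1})=y^{-1}x^{-1}$. Thus $xy\in H_{ef}$ with $(xy)^{-1}=y^{-1}x^{-1}=x^{-1}y^{-1}$, completing the lemma. The principal obstacle is the very first step: unlike in a group or a Clifford semigroup, the idempotent $e$ is not a two-sided identity on all of $X$, so $ey=ye$ must be extracted purely from $xy=yx$; the trick is to identify both $ey$ and $ye$ with the sandwich $eye$ by exploiting $xe=x=ex$ from two sides.
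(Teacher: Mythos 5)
Your proof is correct and follows essentially the same strategy as the paper's: establish the commutations $ey=ye$, $xf=fx$, $ef=fe$ and their analogues for $x^{-1},y^{-1}$, then verify that $xy$ and $y^{-1}x^{-1}$ are mutually inverse over the idempotent $u=ef$. The only cosmetic difference is that you certify $xy\in H_{ef}$ via the group-of-units characterization of the maximal subgroup at $ef$, whereas the paper checks the $\mathcal H$-class identities $xyX^1=(ef)X^1$ and $X^1xy=X^1(ef)$ directly; both are standard and equivalent.
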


\begin{proof} Consider the idempotents $e=xx^{-1}=x^{-1}x$ and $f=yy^{-1}=y^{-1}y$ and observe that
\begin{multline*}ey=x^{-1}xy=x^{-1}yx=x^{-1}yxx^{-1}x=x^{-1}yxe=x^{-1}xye=eye\\=eyxx^{-1}=exyx^{-1}=xyx^{-1}=yxx^{-1}=ye.
\end{multline*}
By analogy we can prove that $xf=fx$. Next, observe that
$$ef=eyy^{-1}=yey^{-1}=fyey^{-1}=feyy^{-1}=fef=
y^{-1}yef=y^{-1}eyf=y^{-1}ey=y^{-1}ye=fe.$$
Then for the idempotent $u=ef=fe$ we have
$xyX^1=xfX^{1}=fxX^1=feX^1=uX^1$ and $X^1xy=X^1ey=X^1ye=X^1fe=X^1u$,
 which means that $xy\in H_u\subseteq H(X)$.
Observe that $$x^{-1}f=x^{-1}ef=x^{-1}fe=x^{-1}fxx^{-1}=x^{-1}xfx^{-1}=efx^{-1}=fex^{-1}=fx^{-1}.$$
By analogy we can prove that $y^{-1}e=ey^{-1}$.
Then $x^{-1}y^{-1}X^1=x^{-1}fX^1=fx^{-1}X^1=feX^1=uX^1$ and $X^1x^{-1}y^{-1}=X^1ey^{-1}=X^1y^{-1}e=X^1fe=X^1u$, which means that $x^{-1}y^{-1}\in H_u$. By analogy we can prove that $y^{-1}x^{-1}\in H_u$. It follows from $xyy^{-1}x^{-1}=xfx^{-1}=fxx^{-1}=fe=u$ that $y^{-1}x^{-1}=(xy)^{-1}$. Also
$xyx^{-1}y^{-1}=yxx^{-1}y^{-1}=yey^{-1}=eyy^{-1}=ef=u$ implies that $x^{-1}y^{-1}=(xy)^{-1}=y^{-1}x^{-1}$.
\end{proof}

For a subset $A$ of a semigroup $X$ and a positive integer number $n$, let
$$\korin{n}{A}\defeq\{x\in X:x^n\in A\}\quad\mbox{and}\quad\korin{\infty}{A}\defeq\bigcup_{n\in\IN}\korin{n}{A}=\{x\in X:A\cap x^\IN\ne\emptyset\},$$ where $$x^\IN=\{x^k:k\in\IN\}$$is the {\em monogenic semigroup} generated by $x$.

 For a point $a\in X$, the set $\korin{\infty}{\{a\}}$ will be denoted by $\korin{\infty}{\,a}$. The sets $\korin{\infty}{E(X)}$ and $\korin{\infty}{H(X)}$ are called the {\em periodic part} and {\em eventually Clifford part} of $X$, respectively.

A semigroup $X$ is called {\em eventually Clifford} if $X=\korin{\infty}{H(X)}$.
It is clear that each periodic semigroup is eventually Clifford (but not vice versa).

The following lemma is proved in \cite[3.1]{BB}.

\begin{lemma}\label{l:C-ideal} For any idempotent $e$ of a semigroup we have $(\!\korin{\infty}{H_e}\cdot H_{e})\cup(H_{e}\cdot \korin{\infty}{H_e}\,)\subseteq H_{e}.$
\end{lemma}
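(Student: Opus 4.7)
The plan is to reduce the lemma to a single key fact: that $a^{n+1}\in H_e$ whenever $a^n\in H_e$ and $n\geq 2$. Once this fact is in hand, the full statement follows by a short manipulation.

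For the setup and reduction, fix $a\in\korin{\infty}{H_e}$ and $h\in H_e$, and pick $n\in\IN$ with $a^n\in H_e$. If $n=1$, then $a\in H_e$ and the conclusion is immediate from $H_e$ being a subgroup, so assume $n\geq 2$. Let $b=(a^n)^{-1}\in H_e$ be the group-inverse of $a^n$, so that $a^nb=ba^n=e$. Since $e$ is the identity of $H_e\ni h$, we have $eh=h=he$, and hence
\[
ah=a(eh)=(ae)h,\qquad ha=(he)a=h(ea),
\]
while a direct rewriting gives $ae=a\cdot a^nb=a^{n+1}b$ and $ea=ba^n\cdot a=ba^{n+1}$. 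Thus if $a^{n+1}\in H_e$, then both $ae$ and $ea$ are products of two elements of the subgroup $H_e$ and therefore lie in $H_e$; multiplying further by $h\in H_e$ keeps us inside $H_e$, and so $ah,ha\in H_e$ as required.

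The key step is therefore to prove $a^{n+1}\,\mathcal H\,e$, i.e. that $a^{n+1}X^1=eX^1$ and $X^1 a^{n+1}=X^1 e$. For the $\mathcal R$-equivalence I would argue as follows: the identity $a^n=ea^n$ (which holds because $a^n\in H_e$) gives $a^{n+1}=(ea^n)a=e\cdot a^{n+1}\in eX$, so $a^{n+1}\in eX^1$; conversely, since $a^{2n}=(a^n)^2$ lies in the group $H_e$ with inverse $b^2$, one has $a^{2n}b^2=e$, hence
\[
a^{n+1}\cdot(a^{n-1}b^2)=a^{2n}b^2=e,
\]
placing $e$ in $a^{n+1}X\subseteq a^{n+1}X^1$. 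The assumption $n\geq 2$ is used here precisely to ensure that $a^{n-1}$ is a genuine element of $X$. The $\mathcal L$-equivalence $X^1 a^{n+1}=X^1 e$ is verified by the symmetric computations $a^{n+1}=a\cdot(a^ne)=a^{n+1}e\in Xe$ and $(b^2a^{n-1})\cdot a^{n+1}=b^2a^{2n}=e$.

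The main obstacle, and essentially the only thing that requires care, is bookkeeping: the semigroup is not assumed commutative, so every rearrangement must proceed by associativity together with the defining identities $ea^n=a^n=a^ne$ (from $a^n\in H_e$) and $a^nb=ba^n=e$, without ever commuting $a$ past $b$ or past $e$.
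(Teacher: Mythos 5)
Your argument is correct. The paper itself does not prove this lemma (it is quoted from \cite[3.1]{BB}), so there is no in-text proof to compare against; but your reduction is sound and, in fact, your key step --- that $a^{n}\in H_e$ with $n\ge 2$ forces $a^{n+1}\in H_e$ --- is exactly the content of the paper's own Lemma~\ref{l:pi-well-defined}, proved there by the same two-sided coset computation ($a^{n+1}X^1\subseteq eX^1$ via $a^n=ea^n$, and $eX^1\subseteq a^{n+1}X^1$ via $e=a^{2n}(a^{2n})^{-1}=a^{n+1}\cdot a^{n-1}b^2$, where $n\ge 2$ guarantees $a^{n-1}\in X$). Your subsequent bookkeeping is also fine: $ae=a^{n+1}b$ and $ea=ba^{n+1}$ lie in the subgroup $H_e$ once $a^{n+1}\in H_e$, and then $ah=(ae)h$ and $ha=h(ea)$ land in $H_e$ because $H_e$ is closed under multiplication; the degenerate case $n=1$ is handled separately. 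So you have a complete, self-contained proof that in substance matches the machinery the paper already uses.
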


\begin{lemma}\label{l:pi-well-defined} Let $x$ be an element of a semigroup $X$ such that $x^n\in H_e$ for some $n\in\IN$ and $e\in E(X)$. Then $x^m\in H_e$ for all $m\ge n$.
\end{lemma}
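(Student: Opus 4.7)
The plan is to induct on $m \ge n$, using Lemma~\ref{l:C-ideal} as the key ingredient. The base case $m=n$ is the hypothesis. For the inductive step, I will observe that the hypothesis $x^n \in H_e$ places $x$ itself inside the eventually-Clifford set $\korin{\infty}{H_e}$, because $x^n \in H_e \cap x^{\IN}$ witnesses $x \in \korin{\infty}{H_e}$ by the very definition $\korin{\infty}{A}=\{y\in X : A\cap y^{\IN}\neq\emptyset\}$.

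Now assume inductively that $x^m \in H_e$ for some $m \ge n$. Then I can write $x^{m+1} = x \cdot x^m$, where $x \in \korin{\infty}{H_e}$ and $x^m \in H_e$. Applying Lemma~\ref{l:C-ideal}, which gives $\korin{\infty}{H_e} \cdot H_e \subseteq H_e$, I conclude that $x^{m+1} \in H_e$. This closes the induction and yields $x^m \in H_e$ for every $m \ge n$.

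There is really no obstacle here: the statement is essentially a direct corollary of Lemma~\ref{l:C-ideal} once one notices that the hypothesis automatically puts $x$ in $\korin{\infty}{H_e}$. The only mild subtlety is recognizing that one does not need to know anything about the orbit $x^{\IN}$ beyond the single membership $x^n \in H_e$ in order to activate Lemma~\ref{l:C-ideal}.
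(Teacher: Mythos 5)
Your proof is correct, and it takes a genuinely different (and shorter) route than the paper. You reduce everything to the quoted Lemma~\ref{l:C-ideal}: the single membership $x^n\in H_e$ certifies $x\in\korin{\infty}{H_e}$, and then the inclusion $\korin{\infty}{H_e}\cdot H_e\subseteq H_e$ lets you push $x^m\in H_e$ to $x^{m+1}=x\cdot x^m\in H_e$ by induction. The paper instead gives a self-contained argument: it takes a minimal counterexample $m\ge n$ with $x^m\notin H_e$, uses $x^{m-1}\in H_e$ to compute the principal ideals directly, showing $x^mX^1=eX^1$ and $X^1x^m=X^1e$ via the identity $eX^1=x^{2(m-1)}(x^{2(m-1)})^{-1}X^1\subseteq x^mx^{m-2}X^1$, and concludes $x^m\in H_e$ by the characterization of $\mathcal H$-classes. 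Your approach buys brevity at the cost of depending on a result imported from \cite{BB}; the paper's approach buys independence from that citation (Lemma~\ref{l:C-ideal} is stated without proof here) at the cost of a more technical computation. Since Lemma~\ref{l:C-ideal} precedes the statement in the text and is explicitly available, your use of it is legitimate, and your induction is airtight.
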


\begin{proof} To derive a contradiction, assume that $x^m\notin H_e$ for some $m\ge n$. We can assume that $m$ is the smallest number such that $m\ge n$ and  $x^m\notin H_e$. It follows from $x^n\in H_e$ and $x^m\notin H_e$ that $m>n>1$ and hence $m-2\in\IN$. The minimality of $m$ ensures that $x^{m-1}\in H_e$. Observe that $x^{m}X^1\subseteq x^{m-1}X^1=ex^{m-1}X^1\subseteq eX^1$ and $$eX^1=x^{2(m-1)}(x^{2(m-1)})^{-1}X^1\subseteq x^{2(m-1)}X^1=x^{m}x^{m-2}X^1\subseteq x^{m}X^1.$$Therefore, $x^mX^1=eX^1$. By analogy one can prove that $X^1x^{m}=X^1e$. Therefore, $x^{m}\in H_e$, which contradicts the choice of $m$.
\end{proof}

For a semigroup $X$, let $\pi:\korin{\infty}{H(X)}\to E(X)$ be the map assigning to each $x\in X$ a unique idempotent $\pi(x)$ such that $x^\IN\cap H_{\pi(x)}\ne\emptyset$. Lemma~\ref{l:pi-well-defined} ensures that the map $\pi$ is well-defined.

\begin{lemma}\label{l:pi-homo} If $X$ is a commutative semigroup, then $\korin{\infty}{H(X)}$ is a subsemigroup of $X$ and $\pi:\korin{\infty}{H(X)}\to E(X)$ is a homomorphism.
\end{lemma}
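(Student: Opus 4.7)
The plan is to verify both assertions by a direct application of Lemmas \ref{l:inverse} and \ref{l:pi-well-defined}, exploiting commutativity at the key step.

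First, to show that $\korin{\infty}{H(X)}$ is a subsemigroup, take any $x,y\in\korin{\infty}{H(X)}$ and choose $n,m\in\IN$ and idempotents $e,f\in E(X)$ with $x^n\in H_e$ and $y^m\in H_f$. By Lemma \ref{l:pi-well-defined}, $x^{nm}\in H_e$ and $y^{nm}\in H_f$. Since $X$ is commutative, $x^{nm}$ and $y^{nm}$ commute, so Lemma \ref{l:inverse} gives $x^{nm}y^{nm}\in H_u$ for the idempotent $u=ef=fe$. Again by commutativity, $(xy)^{nm}=x^{nm}y^{nm}\in H_u\subseteq H(X)$, so $xy\in\korin{\infty}{H(X)}$.

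For the homomorphism property, observe that the computation above shows $(xy)^{nm}\in H_{ef}$, while the definition of $\pi$ gives $\pi(x)=e$ and $\pi(y)=f$, hence $\pi(x)\pi(y)=ef$. By Lemma \ref{l:pi-well-defined}, the idempotent assigned by $\pi$ to an element is uniquely determined by any single power landing in a maximal subgroup, so $\pi(xy)=ef=\pi(x)\pi(y)$. This yields the desired identity $\pi(xy)=\pi(x)\pi(y)$.

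There is no real obstacle here: the only slightly subtle point is that $\pi$ is well-defined and insensitive to which power witnesses membership, which is exactly what Lemma \ref{l:pi-well-defined} provides, and that the $H$-class of a product of commuting Clifford elements is controlled by the product of the associated idempotents, which is exactly what Lemma \ref{l:inverse} provides. Commutativity of $X$ enters only to ensure that $x^{nm}$ and $y^{nm}$ commute (so Lemma \ref{l:inverse} applies) and that $(xy)^{nm}$ factors as $x^{nm}y^{nm}$.
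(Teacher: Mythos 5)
Your proof is correct and follows essentially the same route as the paper: raise both elements to a common power (the paper picks a single $n$ working for both via Lemma~\ref{l:pi-well-defined}, you use $nm$), then invoke Lemma~\ref{l:inverse} together with commutativity to place $x^{k}y^{k}=(xy)^{k}$ in $H_{\pi(x)\pi(y)}$. The only cosmetic point is that the containment $x^{k}y^{k}\in H_{ef}$ is established in the \emph{proof} of Lemma~\ref{l:inverse} rather than in its statement (which only asserts $xy\in H(X)$), which is why the paper cites ``(the proof of)'' that lemma.
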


\begin{proof} Given any $x,y\in \korin{\infty}{H(X)}$, find $n\in\IN$ such that $x^n\in H_{\pi(x)}$ and $y^n\in H_{\pi(y)}$. By (the proof of) Lemma~\ref{l:inverse}, $(xy)^n=x^ny^n\in H_{\pi(x)\pi(y)}\subseteq H(X)$ and hence $xy\in\korin{\infty}{H(X)}$, and $\pi(xy)=\pi(x)\pi(y)$, which means that $\korin{\infty}{H(X)}$ is a subsemigroup of $X$ and $\pi$ is a homomorphism.
\end{proof}

\section{Shifting sets in semigroups}\label{s:Lambda}

In this section we describe the operation of shifting subsets in a semigroup, which allows to transport subsets of a semigroup from one place to another.

Let $X$ be a semigroup. Given two elements $e,b\in X$, consider the set
$$\tfrac{b}e\defeq\{x\in X:xe=b\}$$which can be thought as the set of all left shifts that  move $e$ to $b$. If the set $\frac be$ is not empty, then $\frac be\cdot e=\{b\}$ and for any subset $U\subseteq X$ containing $e$, the set $\frac{b}e\cdot U$ contains $b$.

The assignment $$U\mapsto \Lambdae(b;U)\defeq\{b\}\cup\big(\tfrac{b}e\cdot U\big)$$ will be referred to as the {\em  $e$-to-$b$ shift} of $U$. 

Let us describe some properties of the $e$-to-$b$ shifts.

\begin{lemma}\label{l:Lamb} Let $e$ be an idempotent of a semigroup $X$. For any elements $a,b\in X$ and subsets $U,V,W\subseteq X$ the following statements hold:
\begin{enumerate}
\item If $V\subseteq W$, then $\Lambdae(b;V)\subseteq \Lambdae(b;W)$.
\item $\Lambdae(b;\frac ee)\subseteq\tfrac{be}{e}$.
\item If $b\ne be$, then $\Lambdae(b;\frac ee)=\{b\}$.
\item If $a\in\Lambdae(b;\frac ee)\setminus\{b\}$, then $\Lambdae(a;\frac ee)=\{a\}$.
\item If $a\ne b$ and $\Lambdae(a;\frac ee)\cap\Lambdae(b;\frac ee)\ne \emptyset$, then either $\Lambdae(a;\frac ee)=\{a\}$ or $\Lambdae(b;\frac ee)=\{b\}$.
\item If $V\subseteq W$, then $a\cdot\Lambdae(b;V)\subseteq \Lambdae(ab;W)$.
\item If $Ub\subseteq bW$ and $be=eb$, then $\Lambdae(a;U)\cdot b\subseteq\Lambdae(ab;W)$;
\item If $e\in Z(X)$, $V\subseteq W$, $Ub\subseteq bW$ and $\forall y\in \frac{b}e\;(UyV\subseteq yW)$, then
 $\Lambdae(a;U)\cdot\Lambdae(b;V)\subseteq \Lambdae(ab;W)$.
\end{enumerate}
\end{lemma}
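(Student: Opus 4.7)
The plan is to verify the eight clauses in order, with earlier parts feeding later ones. At the outset I would isolate one small but pivotal observation: if $\tfrac{b}e\ne\emptyset$ and $y\in\tfrac{b}e$, then $b=ye=(ye)e=be$, so the mere nonemptiness of $\tfrac{b}e$ already forces $be=b$. This single fact drives clauses (2)--(5) and also the crucial computation in (8).

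Clause (1) is immediate from the definition. For (2)--(5) I would split on whether $\tfrac{b}e$ is empty: if so, $\Lambdae(b;\tfrac ee)=\{b\}$ and $b\in\tfrac{be}e$ trivially; otherwise $be=b$, and for $y\in\tfrac{b}e$, $x\in\tfrac ee$ one has $(yx)e=ye=b$, yielding (2). Clause (3) is then the contrapositive of the opening observation. For (4), any $a\in\Lambdae(b;\tfrac ee)\setminus\{b\}$ factors as $yx$ with $ae=ye=b\ne a$, so (3) applies at $a$. Clause (5) is a short three-case analysis leaning on (4): if $c$ in the intersection differs from both $a$ and $b$, computing $ce$ from each side gives $a=ce=b$, a contradiction; the cases $c=a$ and $c=b$ fall to (4). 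Clauses (6) and (7) are routine shift computations: for (6), $y\in\tfrac{b}e$ yields $(ay)e=a(ye)=ab$; for (7), $be=eb$ yields $(yb)e=y(eb)=(ye)b=ab$ for $y\in\tfrac{a}e$.

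The only step I expect to demand real care, and what I would flag as the main obstacle, is clause (8). After expanding
\[\Lambdae(a;U)\cdot\Lambdae(b;V)=\{ab\}\cup a\cdot\Lambdae(b;V)\cup \Lambdae(a;U)\cdot b\cup \tfrac{a}e\cdot U\cdot\tfrac{b}e\cdot V,\]
the first three summands lie in $\Lambdae(ab;W)$ by direct inclusion, (6), and (7) respectively. For the fourth, fix $y_1\in\tfrac{a}e$, $u\in U$, $y_2\in\tfrac{b}e$, $v\in V$; the hypothesis $Uy_2V\subseteq y_2W$ delivers $w\in W$ with $uy_2v=y_2w$, whence $y_1uy_2v=y_1y_2w$. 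The crux is to check $y_1y_2\in\tfrac{ab}e$. Centrality of $e$ gives $(y_1y_2)e=y_1(y_2e)=y_1b$ on the one hand, and $ab=(y_1e)b=y_1(eb)=y_1(be)$ on the other; these expressions coincide precisely because the existence of $y_2\in\tfrac{b}e$ already forces $be=b$ via the opening observation. Hence $(y_1y_2)e=ab$, so $\tfrac{a}e\cdot U\cdot\tfrac{b}e\cdot V\subseteq\tfrac{ab}e\cdot W\subseteq\Lambdae(ab;W)$. The subtle point I want to highlight is that the apparent mismatch between $y_1b$ and $ab$ is resolved not by any property of $y_1$, but by applying the opening observation to $y_2$ to collapse $be$ to $b$; without this collapse, the fourth summand would escape $\Lambdae(ab;W)$.
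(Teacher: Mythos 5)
Your proposal is correct and follows essentially the same route as the paper: each clause is verified directly from the definition of $\Lambdae$, with (3)--(5) reduced to the observation that $\tfrac{b}{e}\ne\emptyset$ forces $be=b$, and (8) assembled from (6), (7) and the computation $y_1y_2\in\tfrac{ab}{e}$ via centrality of $e$. The only cosmetic difference is that the paper treats (6)--(8) in one unified four-case analysis of a product $xy$ with $x\in\Lambdae(a;U)$, $y\in\Lambdae(b;V)$, whereas you prove (6) and (7) first and cite them inside (8); the content is identical.
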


\begin{proof} 1. If $V\subseteq W$, then $\Lambdae(b;V)=\{b\}\cup\big(\frac be\cdot V\big)\subseteq\{b\}\cup\big(\frac be\cdot W\big)=\Lambdae(b;W)$.
\smallskip

2.  Fix any $x\in\Lambdae(b;\frac ee)$.  If $x=b$, then $xe=be$ and hence $x\in\frac{be}e$.  If $x\ne b$, then $x=us$ for some $u\in\frac be$ and $s\in \frac ee$. Then $xe=use=ue=uee=be$ and again $x\in\frac{be}e$.
\smallskip

3. If $b\ne be$, then $\frac be=\emptyset$ and hence $\Lambdae(b;V)=\{b\}$.
\smallskip

4. If $a\in\Lambda(b,\frac ee)\setminus\{b\}\subseteq\frac be\cdot\frac ee$, then $a=b'v$ for some $b'\in\frac be$ and $v\in\frac ee$. Then $ae=b've=b'e=b\ne a$ and $\Lambda(a;\frac ee)=\{a\}$ by Lemma~\ref{l:Lamb}(3).
\smallskip

5. Assume that $a\ne b$ and $\Lambdae(a;\frac ee)\cap\Lambdae(b;\frac ee)\ne \emptyset$. If $a\ne ae$ or $b\ne be$, then $\Lambdae(a;\frac ee)=\{a\}$ or $\Lambdae(b;\frac ee)=\{b\}$ by Lemma~\ref{l:Lamb}(3). So, we assume that $a=ae$ and $b=be$. Take any element $x\in\Lambdae(a;\frac ee)\cap\Lambdae(b;\frac ee)$ and observe that $x\in \frac{ae}e\cap\frac{be}e$, by Lemma~\ref{l:Lamb}(2). Then $a=ae=xe=be=b$, which contradicts the choice of $a,b$.
\smallskip

6--8. Take any elements $x\in\Lambdae(a;U)$ and $y\in\Lambdae(b;V)$.

If $x=a$ and $y=b$, then $xy=ab\in \Lambdae(ab;W)$.

If $V\subseteq W$, $x=a$ and $y\ne b$, then $y=b'v$ for some $b'\in\frac be$ and $v\in V\subseteq W$. It follows from $b'\in\frac be$ that $b'e=b$ and $ab'e=ab$ and finally $ab'\in\frac{ab}e$. Now we see that $xy=ab'v\in \tfrac{ab}e\cdot V\subseteq\Lambdae(ab,V)\subseteq \Lambdae(ab;W)$
and hence $a\cdot\Lambdae(b;V)\subseteq\Lambdae(ab;W)$.

If $Ub\subseteq bW$, $x\ne a$ and $y=b$, then $x=a'u$ for some $a'\in\frac ae$ and $u\in U$. It follows from $ub\in Ub\subseteq bW$ that $ub=bw$ for some $w\in W$. If $be=eb$, then $a'be=a'eb=ab$ and hence $a'b\in\frac{ab}e$. Then
$xy=a'ub=a'bw\in\tfrac{ab}e\cdot W\subseteq \Lambdae(ab;W)$
and hence $\Lambdae(a;U)\cdot b\subseteq\Lambdae(ab;W)$.

Finally assume that $x\ne a$, $y\ne b$, and $UcV\subseteq cW$ for every $c\in \frac be$. In this case $x=a'u$ and $y=b'v$ for some $a'\in \frac ae$, $b'\in\frac be$, $u\in U$ and $v\in V$. If $e\in Z(X)$, then $a'b'e=(a'e)(b'e)=ab$, which implies $a'b'\in\frac {ab}e$ and finally $xy=a'ub'v\in a'b'W\subseteq\tfrac{ab}e\cdot W\subseteq \Lambdae(ab;W).$
\end{proof}

\section{Topologies generated by remote bases on semigroups}

In this section we introduce the notion of a remote base on a semigroup $X$ and prove that it generates a $T_0$ semigroup topology on $X$. Also we provide a condition ensuring that this topology is zero-dimensional. We recall that a topological space $X$ satisfies the separation axiom $T_0$ (or else $X$ is a {\em $T_0$-space}) if for any distinct points $x,y\in X$ there exists an open set $U\subseteq X$ such that $U\cap\{x,y\}$ is a singleton.

\begin{definition}\label{d:remote}  Let $X$ be a semigroup and $e$ be a central idempotent in $X$. An {\em $e$-remote base} on $X$ is a function $\Phi=(\Phi_x)_{x\in X}$ assigning to each $x\in X$ a family $\Phi_x$ of subsets of $X$ satisfying the following conditions:
\begin{enumerate}
\item $\forall x\in X\;\forall A,B\in\Phi_x\;\exists C\in\Phi_x\;\;\big(C\subseteq A\cap B\subseteq\frac ee\big)$;
\item $\forall x,y\in X\;\forall W\in\Phi_{xy}\;\;\exists U\in\Phi_x\;\exists V\in\Phi_y\;\;\big(V\subseteq W\;\wedge\; Uy\subseteq yW\;\wedge\;\forall b\in\frac ye \;(UbV\subseteq bW)\big)$.
\end{enumerate}
\smallskip

Given an $e$-remote base $\Phi=(\Phi_x)_{x\in X}$, let  $\Tau_\Phi$ be the topology on $X$, consisting of all sets $W\subseteq X$ such that for every $x\in W$ there exist a set $U\in\Phi_x$ such that $x\in\Lambdae(x;U)\subseteq W$. The topology $\Tau_\Phi$ will be referred to as {\em the topology generated by the $e$-remote base $\Phi$}.
\end{definition}

Lemma~\ref{l:Lamb}(1,4) implies the following  lemma.

\begin{lemma}\label{l:base-e} Let $e$ be a central idempotent in a semigroup $X$ and $\Phi$ be an $e$-remote base. For every $x\in X$ the family $$\mathcal B_x\defeq\big\{\Lambdae(x;V):V\in\Phi_x\big\}$$ is a neighborhood base of the topology $\Tau_\Phi$ at $x$.
\end{lemma}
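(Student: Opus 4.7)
\medskip

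\noindent\textbf{Proof plan for Lemma~\ref{l:base-e}.} To say that $\mathcal B_x$ is a neighborhood base at $x$ amounts to two assertions: (a) every set $\Lambdae(x;V)$ with $V\in\Phi_x$ is a $\Tau_\Phi$-neighborhood of $x$; and (b) every $\Tau_\Phi$-open set $W\ni x$ contains some $\Lambdae(x;V)$ with $V\in\Phi_x$. Assertion (b) is immediate from the definition of $\Tau_\Phi$: the openness of $W$ at $x$ is literally the existence of such a $V$. Thus the real content is assertion (a), which I will establish by showing the stronger statement that $\Lambdae(x;V)$ is itself $\Tau_\Phi$-open (it trivially contains $x$ by definition of the shift).

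Fix $V\in\Phi_x$ and take an arbitrary point $a\in\Lambdae(x;V)$; I need to produce $U\in\Phi_a$ with $\Lambdae(a;U)\subseteq\Lambdae(x;V)$. I would split into two cases. If $a=x$, simply take $U:=V\in\Phi_x=\Phi_a$; then $\Lambdae(a;U)=\Lambdae(x;V)$ and we are done. Suppose now $a\ne x$. By condition~(1) of Definition~\ref{d:remote} every member of $\Phi_x$ is a subset of $\tfrac ee$, so in particular $V\subseteq\tfrac ee$; by Lemma~\ref{l:Lamb}(1) this gives
\[
a\in\Lambdae(x;V)\subseteq\Lambdae\bigl(x;\tfrac ee\bigr),
\]
so $a\in\Lambdae(x;\tfrac ee)\setminus\{x\}$. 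Lemma~\ref{l:Lamb}(4) now yields $\Lambdae(a;\tfrac ee)=\{a\}$.

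Finally, choose any $U\in\Phi_a$ (which exists since $\Phi_a$ must be nonempty, as witnessed by applying condition~(1) with $A=B$ to any element of $\Phi_a$ and, should one wish to be completely formal, noting that the empty $\Phi_a$ would make $\Tau_\Phi$-openness at $a$ vacuous and can be excluded at the outset by the implicit convention that each $\Phi_x$ is nonempty). By condition~(1), $U\subseteq\tfrac ee$; hence by Lemma~\ref{l:Lamb}(1),
\[
\Lambdae(a;U)\subseteq\Lambdae\bigl(a;\tfrac ee\bigr)=\{a\}\subseteq\Lambdae(x;V),
\]
as required. This completes the verification of openness and hence of (a). The only subtle point in the whole argument is recognizing that condition~(1) of the remote base is exactly what is needed to invoke Lemma~\ref{l:Lamb}(4), forcing the ``off-centre'' points of $\Lambdae(x;V)$ to have singleton basic neighborhoods; everything else is bookkeeping around the definitions.
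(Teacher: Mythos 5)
Your proof is correct and is exactly the argument the paper intends: the paper gives no written proof but states that Lemma~\ref{l:base-e} follows from Lemma~\ref{l:Lamb}(1,4), and your use of condition~(1) of Definition~\ref{d:remote} to get $V\subseteq\frac ee$, followed by Lemma~\ref{l:Lamb}(1) and (4) to show that off-centre points of $\Lambdae(x;V)$ have singleton basic neighborhoods, is precisely that route. The only cosmetic blemish is the slightly circular parenthetical about nonemptiness of $\Phi_a$ (which is indeed an implicit standing assumption, since otherwise $X$ itself would fail to be $\Tau_\Phi$-open); everything else is fine.
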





Now we define a condition on an $e$-remote base $\Phi$ implying the  zero-dimensionality of the topology $\Tau_\Phi$.

\begin{definition}\label{d:regular} Let $X$ be a semigroup and $e$ be a central idempotent in $X$. An $e$-remote base $\Phi$  is defined to be {\em regular} if for any element $b\in X$ with $b\ne be$, there exists a set $V\in \Phi_{be}$ such that $b\notin \frac{be}{e}\cdot V$.
\end{definition}

\begin{theorem}\label{t:TS} Let $X$ be a  semigroup, $e$ be a central idempotent in $X$ and $\Phi=(\Phi_x)_{x\in X}$ be an $e$-remote base. Then
\begin{enumerate}
\item $(X,\Tau_{\Phi})$ is a topological semigroup;
\item $(X,\Tau_{\Phi})$ is a $T_0$ topological space with discrete subspace of non-isolated points.
\item If the $e$-remote base $\Phi$ is regular, then for every point $b\in X$, any subset $B\subseteq \Lambdae(b;\frac ee)$ containing $b$ is closed in the topology $\Tau_{\Phi}$.
\item If the $e$-remote base $\Phi$ is regular, then every topology $\tau$ on $X$ with $\tau_\Phi\subseteq\tau$ is Hausdorff and zero-dimensional.
\end{enumerate}
\end{theorem}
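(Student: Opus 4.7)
The plan is to prove the four claims in order, using Lemma~\ref{l:base-e} throughout to identify the basic neighborhoods $\Lambdae(x;U)$ and exploiting the arithmetic of $\Lambdae$ collected in Lemma~\ref{l:Lamb}. For (1), continuity of multiplication at $(x,y)\in X\times X$ follows by combining Definition~\ref{d:remote}(2) with Lemma~\ref{l:Lamb}(8): given a basic neighborhood $\Lambdae(xy;W)$ of $xy$ with $W\in\Phi_{xy}$, Definition~\ref{d:remote}(2) produces $U\in\Phi_x$ and $V\in\Phi_y$ satisfying $V\subseteq W$, $Uy\subseteq yW$ and $UbV\subseteq bW$ for every $b\in\tfrac{y}{e}$; these are precisely the hypotheses of Lemma~\ref{l:Lamb}(8) applied with $a=x$, $b=y$, yielding $\Lambdae(x;U)\cdot\Lambdae(y;V)\subseteq\Lambdae(xy;W)$.

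For (2), I first observe that every $U\in\Phi_x$ satisfies $U\subseteq\tfrac ee$ (take $A=B=U$ in Definition~\ref{d:remote}(1)), so $\Lambdae(x;U)\subseteq\Lambdae(x;\tfrac ee)$. By Lemma~\ref{l:Lamb}(4), every $y\in\Lambdae(x;U)\setminus\{x\}$ satisfies $\Lambdae(y;\tfrac ee)=\{y\}$, which collapses every $\Lambdae(y;U')$ to $\{y\}$ and hence isolates $y$. This simultaneously yields $T_0$ (if distinct $x,y$ are both non-isolated, neither can lie in a basic neighborhood of the other) and discreteness of the non-isolated subspace (for non-isolated $x$, $\Lambdae(x;U)$ meets this subspace only in $\{x\}$).

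For (3), I show $X\setminus B$ is open. Pick $a\notin B$; in particular $a\ne b$. If $a$ is isolated (which includes the case $a\in\Lambdae(b;\tfrac ee)\setminus\{b\}$ by Lemma~\ref{l:Lamb}(4)), the singleton $\{a\}$ works; so assume $a$ is non-isolated, whence $a=ae$. I split according to whether $\Lambdae(a;\tfrac ee)\cap\Lambdae(b;\tfrac ee)$ is empty: when empty, any $\Lambdae(a;U)\subseteq\Lambdae(a;\tfrac ee)$ is already disjoint from $B\subseteq\Lambdae(b;\tfrac ee)$; when non-empty, Lemma~\ref{l:Lamb}(5) (the alternative $\Lambdae(a;\tfrac ee)=\{a\}$ being excluded by non-isolation of $a$) forces $\Lambdae(b;\tfrac ee)=\{b\}$, so $B=\{b\}$, and unwinding $b\in\Lambdae(a;\tfrac ee)\setminus\{a\}$ via $(zu)e=z(ue)=ze$ gives $a=be$. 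This is precisely where regularity is invoked: applied to $b\ne be=a$ it delivers $V\in\Phi_{be}=\Phi_a$ with $b\notin\tfrac{a}{e}V$, so $\Lambdae(a;V)$ is a neighborhood of $a$ avoiding $b$.

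For (4), the Hausdorff property for any topology $\tau\supseteq\Tau_\Phi$ descends from Hausdorffness of $\Tau_\Phi$: item (3) provides $T_1$, and for two non-isolated $x\ne y$ the intersection $\Lambdae(x;\tfrac ee)\cap\Lambdae(y;\tfrac ee)$ is empty by Lemma~\ref{l:Lamb}(5), so already $\Lambdae(x;U)\cap\Lambdae(y;V)=\emptyset$. I expect zero-dimensionality to be the principal technical obstacle. Given $x\in V$ with $V$ $\tau$-open, if $x$ is $\tau$-isolated take $W=\{x\}$; otherwise set $W:=V\cap\Lambdae(x;U)$ for any $U\in\Phi_x$. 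Then $W$ is $\tau$-open, and its complement equals $(X\setminus\Lambdae(x;U))\cup(\Lambdae(x;U)\setminus V)$: the first summand is $\tau$-open because $\Lambdae(x;U)$ is $\Tau_\Phi$-clopen by (3), and the second is contained in $\Lambdae(x;\tfrac ee)\setminus\{x\}$, whose elements are all $\Tau_\Phi$-isolated (hence $\tau$-isolated) singletons by Lemma~\ref{l:Lamb}(4). Thus $W$ is the desired $\tau$-clopen neighborhood of $x$ contained in $V$.
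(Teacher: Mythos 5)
Your proof is correct and follows essentially the same route as the paper's: continuity of multiplication via condition (2) of the definition of an $e$-remote base combined with Lemma~\ref{l:Lamb}(8), the $T_0$ and discreteness claims via Lemma~\ref{l:Lamb}(4,5), and closedness and zero-dimensionality via regularity together with statement (3). The only differences are cosmetic: you prove (3) directly rather than by contradiction, and in (4) you exhibit the clopen neighborhood $V\cap\Lambdae(x;U)$ with $U\in\Phi_x$ (checking openness of its complement by hand) where the paper uses $V\cap\Lambdae(x;\frac ee)$, both resting on the same facts.
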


\begin{proof} By Lemma~\ref{l:base-e}, the family
$$\mathcal B\defeq\big\{\Lambdae(x;V):x\in X,\;V\in\Phi_x\big\}$$
is a base of the topology $\Tau_\Phi$.
\smallskip

1. To see that $(X,\Tau_\Phi)$ is a topological semigroup, take any elements $a,b\in X$ and a neighborhood $O_{ab}\in\Tau_\Phi$ of $ab$. Find a set $W\in\Phi_{ab}$ such that $\Lambdae(ab;W)\subseteq O_{ab}$. By Definition~\ref{d:remote}, there exist sets $U\in\Phi_a$ and $V\in\Phi_b$ such that $V\subseteq W$, $Ub\subseteq bW$ and $\forall y\in\frac be\;\;(UyV\subseteq yW)$. By Lemmas~\ref{l:base-e} and \ref{l:Lamb}(8), $\Lambdae(a;U)$ and $\Lambdae(b;V)$ are $\Tau_\Phi$-open sets such that $a\in\Lambdae(a;U)$, $b\in\Lambdae(b;V)$ and $$\Lambdae(a;U)\cdot\Lambdae(b;V)\subseteq\Lambdae(ab;W)\subseteq O_{ab}.$$
So, $(X,\Tau_\Phi)$ is a topological semigroup.
\smallskip

2. Lemma~\ref{l:Lamb}(5) ensures that the topology $\Tau_\Phi$  satisfies the separation axiom $T_0$.  To see that the subspace $X'$ of non-isolated points of $(X,\Tau_\Phi)$ is discrete, take any point $x'\in X'$ and consider the neighborhood $\Lambdae(x',\frac ee)$ of $x'$ in $(X,\Tau_\Phi)$. By Lemma~\ref{l:Lamb}(4), every point $x\in \Lambda(x',\frac ee)\setminus\{x'\}$ is isolated in $(X,\Tau_\Phi)$, which implies that $\{x'\}=X'\cap \Lambdae(x',\frac ee)$ and hence $x'$ is an isolated point of the set $X'$, so the subspace $X'$ of $(X,\Tau_\Phi)$ is discrete.
\smallskip

3. Assume that the $e$-remote base $\Phi$ is regular. Take any $b\in X$ and any set $B\subseteq \Lambdae(b;\frac ee)$ such that $b\in B$.  Assuming that $B$ is not closed in the topology $\Tau_{\Phi}$, we can find an element $a\notin B$ such that for any set $U\in\Phi_a$ we have $$\emptyset\ne \Lambdae(a;U)\cap B\subseteq \Lambdae(a;\tfrac ee)\cap\Lambdae(b;\tfrac ee)$$ and hence $\{a\}\ne\Lambdae(a;U)\subseteq\Lambdae(a;\tfrac ee)$. Applying Lemma~\ref{l:Lamb}(4), we conclude that $b\in B\subseteq\Lambdae(b;\frac ee)=\{b\}$. Then $b\in \Lambdae(a;U)$ and hence $b=a'u$ for some $a'\in\frac{a}e$ and $u\in U\subseteq \frac ee$. It follows that $be=(a'u)e=a'(ue)=a'e=a\ne b$.  Since the remote base $\Phi$ is regular, there exists a set $U\in\Phi_{be}=\Phi_a$ such that $b\notin \frac{be}{e}\cdot U$ and hence $b\notin \Lambdae(be;U)=\Lambdae(a;U)$, which contradicts $a\in\overline{B}\subseteq \overline{\Lambdae(b;\frac ee)}=\overline{\{b\}}$.
\smallskip

4. Assume that the $e$-remote base $\Phi$ is regular and take any topology $\tau$ on $X$ with $\Tau_\Phi\subseteq\tau$. Since $(X,\Tau_\Phi)$ is a $T_0$-space, the topological space $(X,\tau)$ is a $T_0$-space, too. To see that $(X,\Tau)$ is zero-dimensional, take any open set $U\in\tau$ and any point $x\in U$. Since $\Tau_\Phi\subseteq \tau$, the set $O_x\defeq U\cap \Lambda(x;\frac ee)$ is a neighborhood of $x$ in the topology $\tau$. By Theorem~\ref{t:TS}(3), the set $O_x$ is closed in the topology $\Tau_\Phi$ and hence is closed in the topology $\tau$. Therefore, the topological space $(X,\tau)$ is zero-dimensional and being a $T_0$-space,  is Hausdorff.
\end{proof}

Now we present two easy-to-apply conditions of regularity of a remote base. We recall that $EZ(X)=E(Z(X))=E(X)\cap Z(X)$ is the {\em central semilattice} of a semigroup $X$ and $H_Z(X)\defeq\bigcup_{e\in EZ(X)}H_e$ is the {\em central Clifford part} of $X$. By $\pi:\korin{\infty}{H(X)}\to E(X)$ we denote the map assigning to each $x\in\korin{\infty}{H(X)}$ the unique idempotent $\pi(x)$ such that $x^n\in H_{\pi(x)}$ for some $n\in\IN$.

\begin{proposition}\label{p:regular} Let $X$ be a semigroup  and $\Phi$ be an $e$-remote base at a central idempotent $e$ of $X$. Assume  that the semilattice $EZ(X)$ is well-founded, and for every element $b\ne be$ in $X$ and every finite set $F\subseteq EZ(X)\setminus {\downarrow}e$, there exists a set $V\in\Phi_{be}$ such that $V\subseteq H_Z(X)\setminus\pi^{-1}[{\uparrow}F]$. Then the remote base $\Phi$ is regular and the topological semigroup $(X,\Tau_\Phi)$ is  zero-dimensional.
\end{proposition}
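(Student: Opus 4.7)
The plan is to verify regularity of $\Phi$ (Definition~\ref{d:regular}) and then invoke Theorem~\ref{t:TS}(4) with $\tau=\Tau_\Phi$ to conclude that $(X,\Tau_\Phi)$ is zero-dimensional. So fix $b\in X$ with $b\ne be$; we must produce $V\in\Phi_{be}$ satisfying $b\notin\tfrac{be}{e}\cdot V$. The strategy is to apply the hypothesis to a carefully chosen finite $F\subseteq EZ(X)\setminus{\downarrow}e$ and then rule out $b\in\tfrac{be}{e}\cdot V$ by analyzing the central idempotent $\pi(v)$ attached to any would-be decomposition $b=yv$ with $v\in V\subseteq H_Z(X)$.

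The key observation is that $S\defeq\{f\in EZ(X):bf=b\}$ is a filter of the semilattice $EZ(X)$: if $f\in S$ and $g\ge f$, then $fg=f$, so $bg=(bf)g=b(fg)=bf=b$, giving upward closure; and if $f,g\in S$, then $b(fg)=(bf)g=bg=b$, giving closure under meets. Set $T\defeq S\setminus{\downarrow}e$. When $T=\emptyset$, take $F=\emptyset$; the hypothesis supplies $V\in\Phi_{be}$ with $V\subseteq H_Z(X)$. Any decomposition $b=yv$ with $y\in\tfrac{be}{e}$ and $v\in V$ would give $v\in H_{\pi(v)}$ and $b\pi(v)=yv\pi(v)=yv=b$, so $\pi(v)\in S$; since $T=\emptyset$, $\pi(v)\le e$ and thus $ve=v\pi(v)e=v\pi(v)=v$, yielding $be=(yv)e=y(ve)=yv=b$, contradicting $b\ne be$. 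Hence $b\notin\tfrac{be}{e}\cdot V$.

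In the case $T\ne\emptyset$, I would next show that $T$ has a minimum $f_0$. Well-foundedness of $EZ(X)$ yields at least one minimal element. Suppose $f_1,f_2$ are both minimal in $T$; then $f_1f_2\in S$ lies below both in $EZ(X)$. If $f_1f_2\not\le e$, then $f_1f_2\in T$, so minimality forces $f_1=f_1f_2=f_2$; if instead $f_1f_2\le e$, then $(f_1f_2)e=f_1f_2$, so $be=b(f_1f_2)e=b(f_1f_2)=(bf_1)f_2=bf_2=b$, contradicting $b\ne be$. Upward closure of $S$ then gives $T={\uparrow}f_0$. Apply the hypothesis with $F=\{f_0\}$ to obtain $V\in\Phi_{be}$ with $V\subseteq H_Z(X)\setminus\pi^{-1}[T]$. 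If $b=yv$ were a decomposition with $y\in\tfrac{be}{e}$ and $v\in V$, the computation of the previous paragraph would place $\pi(v)$ in $S\setminus{\downarrow}e=T$, contradicting $v\notin\pi^{-1}[T]$. So $b\notin\tfrac{be}{e}\cdot V$, $\Phi$ is regular, and Theorem~\ref{t:TS}(4) gives zero-dimensionality. The main difficulty is uniqueness of the minimal element of $T$: having two incomparable candidates forces their meet into ${\downarrow}e$, after which $b(f_1f_2)=b$ combined with $(f_1f_2)e=f_1f_2$ collapses to the forbidden equality $be=b$.
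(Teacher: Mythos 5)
Your proof is correct and follows essentially the same route as the paper's: the paper likewise considers $L=\{x\in EZ(X):bx=b\}$, takes $F=\emptyset$ or $F=\{s\}$ for the smallest element $s$ of $L$ (which exists by well-foundedness plus closure of $L$ under meets, the point you spell out more carefully), checks $s\notin{\downarrow}e$ via the same computation $b=bs=bse=be$, and rules out $b=yv$ by locating $\pi(v)$ in ${\uparrow}F$. The only cosmetic difference is that you split off ${\downarrow}e$ from $L$ before minimizing, whereas the paper minimizes over all of $L$ and then verifies the minimum avoids ${\downarrow}e$; both give the same $F$.
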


\begin{proof} To prove that the remote base $\Phi$ is regular, take any element $b\ne be$ in $X$. 
 Observe that  $L=\{x\in EZ(X):bx=b\}$ is a subsemigroup of $EZ(X)$. If $L$ is empty, then put $F=\emptyset$. If $L\ne\emptyset$, then let $F=\{s\}$ where $s$ is the smallest element of the poset $L$, which exists by the well-foundedness of the semilattice $EZ(X)$.

We claim that $F\subseteq EZ(X)\setminus{\downarrow}e$. In the opposite case $F\ne\emptyset$, $L\ne \emptyset$ and $se=es=s$ and  $b=bs=bse=be$, which contradicts our assumption. So, $F\subseteq EZ(X)\setminus{\downarrow}e$. By the assumption of the proposition, there exists a set $V\in\Phi_{be}$ such that $V\subseteq H_Z(X)\setminus\pi^{-1}[{\uparrow}F]$. 
We claim that $b\notin \frac {be}e\cdot V$. In the opposite case we can find elements $b'\in \frac {be}e$ and $v\in V\subseteq H_Z(X)$ such that $b=b'v$. Since $v\in H_Z(X)$, there exists an idempotent $c\in EZ(X)$ such that $v\in H_c$. Then $b=b'v=b'vc=bc$ and hence $c\in L\ne\emptyset$ and $\pi(v)=c\in{\uparrow}s={\uparrow}F$ But $\pi(v)\in{\uparrow}F$ contradicts the choice of $v\in V\subseteq H_Z(X)\setminus\pi^{-1}[{\uparrow}F]$. This contradiction shows that $b\notin \frac{be}e\cdot V$, which completes the proof of the regularity of the remote base $\Phi$. By Theorem~\ref{t:TS}, the topological semigroup $(X,\Tau_\Phi)$ is zero-dimensional.
\end{proof}

A less trivial criterion of regularity of a remote base is supplied by the following proposition. We recall that a semigroup $X$ is {\em eventually Clifford} if $X=\korin{\infty}{H(X)}$.

\begin{proposition}\label{p:regular-powers} Let $X$ be a nonsingular eventually Clifford semigroup such that the poset $E(X)$ is well-founded. Let $\Phi$ be an $e$-remote base for $X$ at a central idempotent $e\in E(X)$ such that for every $n\in\IN$, finite set $F\subseteq  E(X)\setminus {\downarrow}e$,  and every $b\in X$ with $b\ne be$, there exists a set $V\in\Phi_{be}$ such that $V\subseteq\{z^m:z\in  Z(X)\cap\frac ee \setminus\pi^{-1}[{\uparrow}F],\;m\ge n\}$. Then the $e$-remote base $\Phi$ is regular and the topological semigroup $(X,\Tau_\Phi)$ is Hausdorff and zero-dimensional.
\end{proposition}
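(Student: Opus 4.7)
The plan is to verify regularity of $\Phi$, after which Theorem~\ref{t:TS}(4) automatically upgrades $(X,\Tau_\Phi)$ to a Hausdorff zero-dimensional topological semigroup. Fix $b\in X$ with $b\ne be$. Mirroring the proof of Proposition~\ref{p:regular}, I set $L=\{c\in EZ(X):bc=b\}$, a subsemilattice of $EZ(X)$. By the well-foundedness of $E(X)$, if $L$ is nonempty it possesses a smallest element $s$: minimality of $s$ in $L$ forces, for any $c\in L$, the product $cs\in L$ to satisfy $cs\le s$, hence $cs=s$ and $s\le c$. Put $F=\{s\}$, or $F=\emptyset$ if $L=\emptyset$. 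The same one-line computation as in Proposition~\ref{p:regular} shows $F\subseteq E(X)\setminus{\downarrow}e$: if $s\le e$, then $b=bs=bse=be$, contradicting $b\ne be$.

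With $F$ fixed, I would choose an integer $n\in\IN$ and apply the hypothesis to produce $V\in\Phi_{be}$ with
$$V\subseteq\{z^m:z\in Z(X)\cap\tfrac ee\setminus\pi^{-1}[{\uparrow}F],\;m\ge n\};$$
the goal is $b\notin\frac{be}{e}\cdot V$. Suppose toward contradiction that $b=b'z^m$ for some $b'\in\frac{be}{e}$ and $z^m\in V$, and let $c=\pi(z)$. Since $z\in Z(X)$, a large power $z^N\in H_c$ lies in $Z(X)$, so by Lemma~\ref{l:ZH} the idempotent $c$ is central, i.e.\ $c\in EZ(X)$. If additionally $z^m\in H_c$, then $z^mc=z^m$ and the argument of Proposition~\ref{p:regular} runs verbatim: $bc=b'z^mc=b'z^m=b$ forces $c\in L\subseteq{\uparrow}s={\uparrow}F$, contradicting $\pi(z)\notin{\uparrow}F$. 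Thus everything reduces to choosing $n$ so that every representation $b=b'z^m$ consistent with the constraints on $V$ automatically satisfies $z^m\in H_{\pi(z)}$.

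The main obstacle is justifying this choice of $n$. Letting $j_0(z)$ be the least positive integer with $z^{j_0(z)}\in H_{\pi(z)}$ (well-defined by the eventual Clifford property and Lemma~\ref{l:pi-well-defined}), the bad case is $m<j_0(z)$, where $z^m\in\korin{\infty}{H(X)}\setminus H(X)$ and the crucial identity $z^mc=z^m$ fails. I would rule this case out by contradiction, invoking the nonsingularity of $X$: if no $n$ suffices, there would exist an infinite sequence $(b'_k,z_k,m_k)$ with $b=b'_kz_k^{m_k}$, $m_k\to\infty$, $z_k^{m_k}\notin H(X)$, and $\pi(z_k)\notin{\uparrow}F$. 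Centrality of the $z_k$'s yields the power identities $b^N=(b'_k)^Nz_k^{Nm_k}$ for every $N$; combined with Lemma~\ref{l:C-ideal} and the well-foundedness of $E(X)$ (used to stabilize the idempotents $\pi(z_k)$ along a subsequence), one extracts an infinite subset $A\subseteq X$ whose pairwise products collapse into a single element, contradicting nonsingularity. Once such $n$ is chosen, the preceding paragraph furnishes the required $V\in\Phi_{be}$ with $b\notin\frac{be}{e}\cdot V$, and regularity of $\Phi$ is established.
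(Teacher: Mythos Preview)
Your reduction to regularity and the appeal to Theorem~\ref{t:TS}(4) are correct, and you have correctly identified that the entire difficulty lies in producing $n$ and $F$. However, your sketch of the nonsingularity argument is not a proof, and the gap is substantial: this is precisely the content of Lemma~\ref{l:powers}, whose Case~2 occupies roughly two pages of delicate construction.

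Two concrete problems. First, your choice of $F$ (the least element of $\{c\in EZ(X):bc=b\}$) is borrowed from Proposition~\ref{p:regular} and is not the one that makes the nonsingularity argument go through. The paper instead passes to $c=b^q\zeta$, where $q$ is the largest index with $b^q\ne b^qe$ and $\zeta\in Z(X^1)$ is auxiliary, and takes $u$ minimal in the set $L=\{u\in E(X^1):\exists\zeta\in Z(X^1)\;(b^q\zeta u=b^q\zeta\ne b^q\zeta e)\}$, setting $F=X\cap\{u\}$. The minimality of $u$ in \emph{this} $L$ is what forces, in Claim~\ref{cl:distinct-azn}, the existence of an exponent $l\ge n$ with $az^le=az^l$ (otherwise $\pi(z)u\in L$ and $\pi(z)u=u$, contradicting $z\notin\pi^{-1}[{\uparrow}F]$). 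Your $F$ gives no leverage on products of the form $az^l$ for $a\ne b$.

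Second, ``one extracts an infinite subset $A$ whose pairwise products collapse'' hides the entire argument. The paper builds sequences $(a_k,z_k,n_k,l_k,m_k)$ inductively so that $a_kz_k^{n_k}=c$, the exponent $l_k$ is the last index with $a_kz_k^{l_k}e\ne a_kz_k^{l_k}$, and $m_k\in[l_k-k,l_k]$ is chosen to avoid earlier elements while keeping all cross-products equal to $c^2$; this uses that each set $A_i=\{a_iz_i^{m_i}z:z\in Z(X)\cap\tfrac ee\}$ is already finite by nonsingularity, which bounds certain auxiliary indices $\lambda_i$. None of this is suggested by your sketch, and your remark that well-foundedness ``stabilizes $\pi(z_k)$ along a subsequence'' is not how well-foundedness is used---it selects a minimal element of $L$, not a limit of a sequence.
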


\begin{proof} To prove that the $e$-remote base $\Phi$ is regular, take any element $b\in X$ such that $b\ne be$. We need to find a set $V\in\Phi_{be}$ such that $b\notin \frac{be}{e}\cdot V$. By Lemma~\ref{l:powers} below, there exist $n\in\IN$ and a finite set $F\subseteq E(X)\setminus{\downarrow}e$ such that $b\notin\{az^m:a\in X,\;z\in Z(X)\cap\frac ee \setminus\pi[{\uparrow}F],\;m\ge n\}$. By our assumption, there exists a set $V\in\Phi_{be}$ such that $V\subseteq\{z^m:z\in Z(X)\cap\frac ee \setminus\pi[{\uparrow}F],\;m\ge n\}$.

We claim that $b\notin\frac {be}e\cdot V$. Indeed, assuming that $b\in\frac{be}e\cdot V$, we can find elements $a\in\frac{be}e$ and $v\in V$ such that $b=av$. By the choice of $V$, there exists $z\in Z(X)\cap\frac ee\setminus\pi[{\uparrow}F]$ such that $v=z^m$ for some $m\ge n$. Then $b=az^m$, which contradicts the choice of $n$ and $u$. To complete the proof of Proposition~\ref{p:regular-powers}, it remains to prove the following lemma.
\end{proof}

\begin{lemma}\label{l:powers} Let $X$ be a nonsingular eventually Clifford semigroup such that the poset $E(X)$ is well-founded. For any $e\in EZ(X)$ and $b\in X$ with $b\ne be$, there exist $n\in\IN$ and $F\in[E(X)\setminus{\downarrow}e]^{<\w}$ such that $ue\ne u$ and $b\notin \{az^m:a\in X,\;z\in  Z(X)\cap\frac ee \setminus\pi^{-1}[{\uparrow}F],\;m\ge n\}$.
\end{lemma}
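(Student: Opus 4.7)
The plan is by contradiction. Suppose the conclusion fails; then for every $n\in\IN$ and every finite $F\subseteq E(X)\setminus{\downarrow}e$ there exist $a\in X$, $m\ge n$ and $z\in Z(X)\cap\frac{e}{e}$ with $\pi(z)\notin{\uparrow}F$ and $b=az^m$. Iterating this, I would recursively construct sets $F_0=\emptyset\subsetneq F_1\subsetneq\cdots$, representations $b=a_iz_i^{m_i}$ with $m_i\ge i$, and idempotents $f_i:=\pi(z_i)\notin{\uparrow}F_i$, setting $F_{i+1}:=F_i\cup\{f_i\}$; then $f_i\not\ge f_j$ for every $j<i$.

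First I would show that each $f_i\in EZ(X)\cap({\uparrow}e\setminus\{e\})$. Since $z_i\in Z(X)$, a sufficiently high power $z_i^N$ lies in $Z(X)\cap H_{f_i}$, so by Lemma~\ref{l:ZH} we have $f_i\in EZ(X)$. Iterating $z_ie=e$ gives $z_i^Ne=e$; applying Lemma~\ref{l:inverse} to the commuting Clifford pair $z_i^N\in H_{f_i}$ and $e\in H_e$ places the product $z_i^Ne=e$ in $H_{f_ie}$, so $f_ie=e$ by uniqueness of the $H$-class of $e$, and hence $f_i\ge e$. If $f_i=e$, then $z_i^N\in H_e\cap\frac{e}{e}=\{e\}$ (since $e$ is the identity of $H_e$), whence $z_i^N=e$; Lemma~\ref{l:pi-well-defined} then gives $z_i^{m_i}=e$ as soon as $m_i\ge N$, forcing $b=a_ie=be$ and contradicting $b\ne be$. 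Choosing $m_i$ large enough at each stage we may therefore assume $f_i>e$.

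The core of the argument is to extract an idempotent $t\in E(X)\setminus{\downarrow}e$ lying below every such $f_i$, because then $F=\{t\}$ contradicts the construction. The key computation: if $m_i$ exceeds the Clifford threshold of $z_i$, so that $\bar z_i:=z_i^{m_i}\in H_{f_i}\cap Z(X)$, then $\bar z_i$ is invertible in the group $H_{f_i}$ and $bf_i=a_i\bar z_if_i=a_i\bar z_i=b$, placing $f_i$ in the meet-semilattice $T_b:=\{f\in EZ(X):bf=b\}$. By the well-foundedness of $E(X)$, $T_b$ has a least element~$t$: any two distinct minimal elements would have a strictly smaller common meet, contradicting minimality. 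The collapse $te=t\Rightarrow b=bt=b(te)=(bt)e=be$ rules out $t\le e$, so $t\in E(X)\setminus{\downarrow}e$; every such $f_i\ge t$, and restarting the recursion with $F_1:=\{t\}$ contradicts $f_1\notin{\uparrow}F_1$.

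The main obstacle I expect is ensuring that the key computation applies uniformly: the failure hypothesis only provides $m_i\ge i$, which may fall short of the Clifford threshold $N(z_i)$ of the particular $z_i$ returned. If infinitely many stages yield ``Clifford'' representations ($z_i^{m_i}\in H_{f_i}$), one extracts such a subsequence and the argument closes. Otherwise all but finitely many stages yield ``non-Clifford'' representations with $z_i^{m_i}\notin H_{f_i}$, producing an infinite central family $\{z_i^{m_i}\}_i\subseteq X\setminus H_Z(X)$ in $\frac{e}{e}$ that shares the common target $\pi(b)=\pi(a_i)f_i=:g$. By Lemma~\ref{l:inverse} the pairwise products $z_i^{m_i}\cdot z_j^{m_j}$ lie in $\sqrt[\infty]{H_{f_if_j}}$, and stabilising $f_if_j$ along a subsequence (using well-foundedness of the semilattice $EZ(X)$) together with Lemma~\ref{l:C-ideal} and Lemma~\ref{l:HZ} collapses these products into a single $H$-class of $H_Z(X)$; an infinite subset of $X$ with constant product inside this group then contradicts the nonsingularity of~$X$. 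This Clifford-collapse step is where the main technical effort lies.
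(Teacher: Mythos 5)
The first half of your argument is sound and close in spirit to what the paper does elsewhere (compare Proposition~\ref{p:regular}): for a witness $b=az^m$ with $z^m\in H_{\pi(z)}$ you correctly get $\pi(z)\in EZ(X)$ via Lemma~\ref{l:ZH}, $b\pi(z)=b$, and hence $\pi(z)$ lies in the meet-subsemilattice $T_b=\{f\in EZ(X):bf=b\}$, whose least element $t$ (well-foundedness plus closure under meets) satisfies $t\not\le e$ because $te=t$ would force $b=bt=bte=be$. So $F=\{t\}$ kills every ``Clifford'' witness, and the dichotomy you set up correctly reduces the lemma to the case where all witnesses $(a,z,m)$ returned by the failure hypothesis have $z^m\notin H(X)$.

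The genuine gap is precisely that remaining case, which is the technical heart of the lemma, and your sketch of it would not close. Knowing that the pairwise products $z_i^{m_i}z_j^{m_j}$ (or $a_iz_i^{m_i}\cdot a_jz_j^{m_j}$) eventually fall into, or have powers in, a single $\mathcal H$-class of $H_Z(X)$ does not make the product set a singleton: the lemma carries no group-finiteness hypothesis, so that $\mathcal H$-class may be an infinite group, and nonsingularity requires an infinite set $A$ with $AA$ literally one point. No mechanism in your sketch forces the products to be \emph{constant}. The paper manufactures this constancy by hand: it first replaces $b$ by $c=b^q\zeta$, where $q$ is the transition exponent of $b$ ($b^q\ne b^qe$ but $b^{q+1}=b^{q+1}e$) and $\zeta,u$ come from minimizing over the auxiliary set $L=\{u\in E(X^1):\exists\zeta\,(b^q\zeta u=b^q\zeta\ne b^q\zeta e)\}$, so that $ce\ne c$ while $c^2e=c^2$; the minimality of $u$ is then what guarantees (Claim~\ref{cl:distinct-azn}) that each representation $c=az^n$ admits a finite transition exponent $l$ with $az^{l}e\ne az^{l}$ but $az^{l+1}e=az^{l+1}$. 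Choosing $m_k$ in the window $[l_k-k,l_k]$ makes every pairwise product $a_iz_i^{m_i}a_kz_k^{m_k}$ overshoot the relevant transition point, absorb $e$, and collapse to the single value $c^2e=c^2$; even then, bounding those transition points ($\lambda_i\le|A_i|$) and ensuring the set $\{a_kz_k^{m_k}\}_{k\in\w}$ is infinite each require separate nonsingularity arguments. None of this machinery --- the passage from $b$ to $c$, the set $L$ and its minimal element, the transition exponents, the window for $m_k$ --- appears in your proposal, and without it the ``Clifford-collapse'' step does not produce the singleton product set needed to contradict nonsingularity.
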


\begin{proof} 
Fix $e\in EZ(X)$ and $b\in X$ with $b\ne be$. If $\pi(b)e\ne\pi(b)$, then the number $n=1$ and the set $F=\{\pi(b)\}$ have the required properties. Indeed, assume that $b=az^m$ for some $a\in X$, $m\in\IN$ and $z\in  Z(X)\cap\frac ee \setminus\pi^{-1}[{\uparrow} F]$.
Since $X$ is eventually Clifford, there exists $k\in\IN$ such that $b^k\in H_{\pi(b)}$, $a^k\in H_{\pi(a)}$ and $z^{mk}\in H_{\pi(z)}$. Let $(b^k)^{-1}$, $(a^k)^{-1}$ and $(z^{mk})^{-1}$ be the inverse elements to $b^k,a^k,z^{mk}$ in the groups $H_{\pi(b)}$, $H_{\pi(a)}$ and $H_{\pi(z)}$, respectively. By Lemma~\ref{l:inverse}, the equality $b^k=(az^m)^k=a^kz^{mk}$ implies $(b^k)^{-1}=(a^k)^{-1}(z^{mk})^{-1}$. Multiplying the last equality by $b^k=a^kz^{mk}$, we obtain
$$\pi(b)=b^k(b^k)^{-1}=a^kz^{mk}(a^k)^{-1}(z^{mk})^{-1}=a^k(a^k)^{-1}z^{mk}(z^{mk})^{-1}=\pi(a)\pi(z)=\pi(z)\pi(a),$$which contradicts the choice of $z\notin\pi^{-1}[{\uparrow} F]$.
\smallskip

So, we assume that $\pi(b)e=\pi(b)$.
Since $X$ is eventually Clifford, there exists $n\in\IN$ such that $b^n\in H_{\pi(b)}$. Then $b^n=b^n\pi(b)=b^n(\pi(b)e)=(b^n\pi(b))e=b^ne$. Let $q\in\IN$ be the unique number $q\in\IN$ such that $b^q\ne b^qe$ and $b^{q+1}=b^{q+1}e$. 

Let $X^1=X\cup \{1\}$ where $1\notin X$ be an element such that $1x=x=x1$ for all $x\in X^1$. In the set $E(X^1)$ consider the subset $$L\defeq\{u\in E(X^1):\exists \zeta\in Z(X^1)\quad (b^q\zeta u=b^q\zeta\ne b^q\zeta e)\}.$$ The set $L$ contains $1$ and hence is not empty. Since the poset $E(X)$ is well-founded, there exists $u\in L$ such that $L\cap{\downarrow}u=\{u\}$. Let $F\defeq X\cap\{u\}$.

It remains to prove that there exists $n\in\IN$ such that $$b\notin\{az^m:a\in X,\;z\in  Z(X)\cap\tfrac ee \setminus\pi^{-1}[{\uparrow}F],\;m\ge n\}.$$ To derive a contradiction, assume that for every $n\in \IN$ there exist $a\in  X$ and $z\in  Z(X)\cap\tfrac ee \setminus\pi^{-1}[{\uparrow}F]$ such that $b=az^m$ for some $m\ge n$. 

By the definition of the set $L\ni u$, there exists $\zeta\in Z(X^1)$ such that $b^q\zeta u=b^q\zeta\ne b^q\zeta e$ and hence $eu\ne u$.  Let $$c\defeq b^q\zeta$$ and observe that
$$cu=c\ne ce.$$

\begin{claim}\label{cl:b2q} $c^2e=c^2$.
\end{claim}

\begin{proof} If $q=1$, then $b^{2q}e=b^{q+1}e=b^{q+1}=b^{2q}$ by the choice of $q$. If $q>1$, then $b^{2q}e=b^{q-1}b^{q+1}e=b^{q-1}b^{q+1}=b^{2q}$. In both cases we have $b^{2q}e=b^{2q}$ which implies $$c^2e=(b^q\zeta)^2e=b^{2q}\zeta^2e =b^{2q}e\zeta^2=b^{2q}\zeta^2=(b^q\zeta)^2=c^2.$$
\end{proof}

\begin{claim}\label{cl:exist-azn} For every $n\in \IN$, there exist $a\in  X$ and $z\in  Z(X)\cap\frac ee \setminus\pi^{-1}[{\uparrow} F]$ such that $c=az^m$ for some $m>n$.
\end{claim}

\begin{proof} By our assumption, for every $n\in\IN$ there exists $\alpha\in X$ and $z\in  Z(X)\cap\frac ee \setminus\pi^{-1}[{\uparrow} F]$ such that $b=\alpha z^k$ for some $k>n$. Then
$$c=b^q\zeta =(\alpha z^k)^q\zeta=(\alpha^q\zeta)z^{kq}=az^m$$
where $a=\alpha^q\zeta$ and $m=kq\ge k>n$.
\end{proof}

\begin{claim}\label{cl:distinct-azn} Let  $a\in X$, $z\in  Z(X)\cap\frac ee \setminus\pi^{-1}[{\uparrow} F]$ and $n\in\IN$ be such that $c=az^n$. Then there exists $m\ge n$ such that $az^me\ne az^m$ and $az^{m+1}e=az^{m+1}$.
\end{claim}

\begin{proof}  Since $X$ is eventually Clifford, there exists a number $l>n$ such that $z^{l}\in H_{\pi(z)}$ and hence $z^l=z^l\pi(z)$. By Lemma~\ref{l:ZH}, $\pi(z)=\pi(z^l)\in EZ(X)$ and hence $\pi(z)u\in E(X)$. We claim that $az^le=az^l$. Indeed, assuming that $az^le\ne az^l$, we obtain that
\begin{multline*}
b^q\zeta z^{l-n}\pi(z)u=c z^{l-n}\pi(z)u=cuz^{l-n}\pi(z)=cz^{l-n}\pi(z)=\\
az^nz^{l-n}\pi(z)=az^l\pi(z)=az^l=az^nz^{l-n}=cz^{l-n}=b^q\zeta z^{l-n}=\\
cz^{l-n}=az^nz^{l-n}=az^{l}\ne az^le=az^nz^{l-n}e=cz^{l-n}e=b^q\zeta z^{l-n}e
\end{multline*}
and hence $\pi(z)u=u\pi(z)\in L$ and $\pi(z)u=u$ by the minimality of $u$. Then $z\in\pi^{-1}[{\uparrow} u]=\pi^{-1}[{\uparrow}F]$, which contradicts the choice of $z$. This contradiction shows that $az^le=az^l$.

On the other hand $az^ne=ce\ne c=az^n$. Then there exists $m\in\IN$ such that $n\le m<l$ and $az^me\ne az^m$ but $az^{m+1}e=az^{m+1}$.
\end{proof}

Using Claims~\ref{cl:exist-azn} and \ref{cl:distinct-azn}, we shall inductively  construct sequences $\{a_k\}_{k\in\w}\subseteq X$, $\{z_k\}_{k\in\w}\subseteq Z(X)\cap\frac ee\setminus\pi^{-1}[{\uparrow} F]$ and $(n_k)_{k\in\w},(l_k)_{k\in\w},(m_k)_{k\in\w}\in\IN^\w$ such that for every $k\in\w$ the following conditions are satisfied:
\begin{itemize}
\item[(i)] $a_kz_k^{n_k}=c$;
\item[(ii)] $a_kz_k^{l_k}e\ne a_kz_k^{l_k}$ and $a_kz_k^{l_k+1}e=a_kz_k^{l_k+1}$;
\item[(iii)] $a_kz_k^{m_k}\notin\{a_iz_i^{m_i}:i<k\}$;
\item[(iv)]  $2k+1<n_k\le l_k$ and $l_k-k\le m_k\le l_k$;
\item[(v)] $a_iz_i^{m_i}a_kz_k^{m_k}=c^2$ for any $i\le k$.
\end{itemize}

To start the inductive construction, apply Claim~\ref{cl:exist-azn} and find $a_0\in X$, $z_0\in Z(X)\cap\frac{e}e\setminus\pi^{-1}[{\uparrow} F]$ and $n_0>1$ such that $c=a_0z_0^{n_0}$. By Claim~\ref{cl:distinct-azn}, there exists a number $l_0\ge n_0$ such that $a_0z_0^{l_0}e\ne a_0z_0^{l_0}$ and  $a_0z_0^{l_0+1}e=a_0z_o^{l_0+1}$.  Put $m_0=l_0$. It is clear that $a_0,z_0,n_0,l_0,m_0$ satisfy the inductive conditions (i)--(iv). Let us show that (v) is satisfied, too. If $m_0=n_0$, then
$$(a_0z_0^{m_0})^2=(a_0z_0^{n_0})^2=c^2.$$
 If $n_0<m_0$, then
$$(a_0z_0^{m_0})^2=(a_0z_0^{n_0})^2z_0^{2(m_0-n_0)}=c^2z_0^{2(m_0-n_0)}=c^2e z_0^{2(m_0-n_0)}=c^2e=c^2,$$
because $z_0\in\frac ee$.
\smallskip

Assume that for some $k\in\w$ sequences $(a_i)_{i<k}$, $(z_i)_{i<k}$, $(n_i)_{i<k}$, $(l_i)_{i<k}$ and $(m_i)_{i<k}$ satisfying the inductive conditions (i)--(v) have been constructed.

\begin{claim} For every $i<k$ the set $A_i=\{a_iz_i^{m_i}z:z\in Z(X)\cap\frac ee\}$ is finite.
\end{claim}

\begin{proof} The inductive condition (iv) implies that $m_i\ge l_i-i>i+1$ and hence $l_i\ge 2i+2$ and $2m_i\ge (2l_i-2i)\ge l_i+2$.
The inductive conditions (ii), (iv) and $z_i\in Z(X)\cap\frac ee$ ensure that
\begin{multline*}
(a_iz_i^{m_i})^2=(a_iz_i^{l_i+1})a_iz_i^{2m_i-l_i-1}=(a_iz_i^{l_i+1}e)a_iz_i^{2m_i-l_i-1}=(a_iz_i^{m_i})^2e=
(a_iz_i^{m_i})^2z_i^{2l_i}e=\\
(a_iz_i^{n_i})^2z_i^{2l_i+2m_i-2n_i}e=c^2z_i^{2l_i+2m_i-2n_i}e=c^2e.
\end{multline*}
Then for every $z,z'\in Z(X)\cap\frac ee$ we have
 $$(a_iz_i^{m_i}z)(a_iz_i^{m_i}z')=(a_iz_i^{m_i})^2zz'=c^2ezz'=c^2zz'e=c^2e=c^2,$$
 which means that $A_iA_i\subseteq \{c^2\}$ and hence $A_i$ is finite by the nonsingularity of the semigroup $X$.
\end{proof}

By Claim~\ref{cl:exist-azn}, there exist $a_k\in X$, $z_k\in Z(X)\cap\frac ee\setminus\pi^{-1}[{\uparrow}F]$ and $n_k>2k+1+\max_{i<k}|A_i|$ such that $a_kz_k^{n_k}=c$.
By Claim~\ref{cl:distinct-azn}, there exists a number $l_k\ge n_k$ such that $a_kz_k^{l_k}e\ne a_kz_k^{l_k}$ but $a_kz_k^{l_k+1}e=a_kz_k^{l_k+1}$.

\begin{claim}\label{cl:rizni} For every positive numbers $i<j\le l_k$ we have $a_kz_k^i\ne a_kz_k^j$.
\end{claim}

\begin{proof} To derive a contradiction, assume that $a_kz_k^i=a_kz_k^j$ and hence
$$a_kz_k^{i+2(j-i)}=a_kz_k^jz_k^{j-i}=a_kz_k^iz_k^{j-i}=a_kz_k^j=z_kz_k^i.$$
Proceeding by induction, we obtain that $a_kz_k^{i+s(j-i)}=a_kz_k^i$ for every $s\in\IN$.
Find $s\in\IN$ such that $i+s(j-i)>l_k+1$. Then
\begin{multline*}
a_kz_k^{l_k}=a_kz_k^{i}z_k^{l_k-i}=a_kz_k^{i+s(j-i)}z^{l_k-i}=a_kz_k^{l_k+1}z_k^{i+s(j-i)-l_k-1+l_k-i}=\\
a_kz_k^{l_k+1}ez_k^{s(j-i)-1}=a_kz_k^{l_k}ez_k^{s(j-i)}=a_kz_k^{l_k}e,
\end{multline*}
which contradicts the choice of $l_k$.
\end{proof}

For every $i<k$, let
$$\lambda_i=\max\big(\{0\}\cup\{\lambda\in\{1,\dots,l_k\}:a_iz_i^{m_i}a_kz_k^{\lambda}e\ne a_iz_i^{m_i}a_kz_k^{\lambda}\}\big).$$

\begin{claim}\label{cl:lambda} $a_iz_i^{m_i}a_kz_k^{\lambda_i+1}e=a_iz_i^{m_i}a_kz_k^{\lambda_i+1}$.
\end{claim}

\begin{proof} If $a_iz_i^{m_i}a_kz_k^{\lambda_i+1}e\ne a_iz_i^{m_i}a_kz_k^{\lambda_i+1}$, then the definition of the number $\lambda_i$ ensures that $\lambda_i=l_k$. Then
$$a_iz_i^{m_i}a_kz_k^{\lambda_i+1}=a_iz_i^{m_i}a_kz_k^{l_k+1}=a_iz_i^{m_i}a_kz_k^{l_k+1}e=a_iz_i^{m_i}a_kz_k^{\lambda_i+1}e,$$
which contradicts our assumption.
\end{proof}

\begin{claim} $\lambda_i\le |A_i|$.
\end{claim}

\begin{proof} Assuming that $\lambda_i>|A_i|$, we conclude that $\lambda_i>0$ and hence $a_iz_i^{m_i}a_kz_k^{\lambda_i}e\ne a_iz_i^{m_i}a_kz_k^{\lambda_i}$. Then  also  $a_iz_i^{m_i}z_k^{\lambda_i}e\ne a_iz_i^{m_i}z_k^{\lambda_i}$ and $a_iz_i^{m_i}z_k^{j}e\ne a_iz_i^{m_i}z_k^{j}$ for every $j\le \lambda_i$. The definition of the set $A_i$ ensures that $\{a_iz_i^{m_i}z_k^j:j\in\IN\}\subseteq A_i$.
Since $\lambda_i>|A_i|$, there exist positive numbers $j<j'\le\lambda_i$ such that $a_iz_i^{m_i}z_k^j=a_iz_i^{m_i}z_k^{j'}=a_iz_i^{m_i}z_k^{j+(j'-j)}$.
Then $$a_iz_i^{m_i}z_k^{j+2(j'-j)}=a_iz_i^{m_i}z_k^{j'}z_k^{j'-j}=a_iz_i^{m_i}z_k^jz_k^{j'-j}=a_iz_i^{m_i}z_k^{j'}=a_iz_i^{m_i}z_k^j.$$Proceeding by induction, we can prove that
$$a_iz_i^{m_i}z_k^{j+s(j'-j)}=a_iz_i^{m_i}z_k^j$$for every $s\in\IN$. Choose $s\ge 2$ such that $j+s(j'-j)>\lambda_i+1$.
By Claim~\ref{cl:lambda},
\begin{multline*}
a_iz_i^{m_i}a_kz_k^{\lambda_i}=a_iz_i^{m_i}a_kz_k^jz_k^{\lambda_i-j}=a_iz_i^{m_i}a_kz_k^{j+s(j'-j)}z_k^{\lambda_i-j}=a_iz_i^{m_i}a_kz_k^{\lambda_i+1}z_k^{s(j'-j)-1}=\\
a_iz_i^{m_i}a_kz_k^{\lambda_i+1}ez_k^{s(j'-j)-1}=a_iz_i^{m_i}a_kz_k^{j+s(j'-j)}z_k^{\lambda_i-j}e=
a_iz_i^{m_i}a_kz_k^{j}z_k^{\lambda_i-j}e=a_iz_i^{m_i}a_kz_k^{\lambda_i}e,
\end{multline*}
which contradicts the definition of $\lambda_i$.
\end{proof}

Using Claim~\ref{cl:rizni}, choose a number $m_k$ such that $a_kz_k^{m_k}\notin\{a_iz_i^{m_i}:i<k\}$ and $l_k-k\le m_k\le l_k$. Observe that for every $i<k$ we have  $m_k\ge l_k-k\ge n_k-k>|A_i|\ge\lambda_i$ and hence $a_iz_i^{m_i}a_kz_k^{m_k}e=a_iz_i^{m_i}a_kz_k^{m_k}$, see Claim~\ref{cl:lambda}. Then
\begin{multline*}
a_iz_i^{m_i}a_kz_k^{m_k}=a_iz_i^{m_i}a_kz_k^{m_k}e=a_iz_i^{m_i}ea_kz_k^{m_k}e=a_iz_i^{m_i}z_i^{l_i}ea_kz_k^{m_k}z_k^{l_k}e=\\
a_iz_i^{n_i}z_i^{l_i+m_i-n_i}ea_kz_k^{n_k}z_k^{l_k+m_k-n_k}e=cece=c^2e=c^2.
\end{multline*}
Also $2m_k\ge 2l_k-2k\ge l_k+n_k-2k\ge l_k+2$ implies that
\begin{multline*}
(a_kz_k^{m_k})^2=a_k^2z^{2m_k}=a_k^2z_k^{l_k+1}z_k^{2m_k-l_k-1}=a_k^2z_k^{l_k+1}ez_k^{2m_k-l_k-1}=(a_kz_k^{m_k})^2e=(a_kz_k^{m_k})^2z^{2l_k}e=\\
(a_kz_k^{n_k})^2z_k^{2(l_k+m_k-n_k)}e=c^2e=c^2.
\end{multline*}
Therefore,  $a_k,z_k,n_k,l_k,m_k$ satisfy the inductive conditions (i)--(v).
\smallskip

After completing the inductive construction, consider the set $A=\{a_kz_k^{m_k}\}_{k\in\w}$. The inductive conditions  (iii) and (v) ensure that the set $A$ is infinite and $AA=\{c^2\}$ is a singleton. But this contradicts the nonsingularity of $X$.
\end{proof}

\section{Semigroup topologies generated by $e$-bases on semigroups}\label{s:topology}

In this section we introduce the notion of an $e$-base, which is a suitable simplification of the (more general) notion of an $e$-remote base.

\begin{definition}\label{d:remote}  Let $X$ be a semigroup and $e$ be a central idempotent in $X$. An {\em $e$-base} on $X$ is a nonempty family $\Phi$ of subsemigroups of $X$ such that for every $U,W\in\Phi$ there exists $V\in \Phi$ such that $V\subseteq U\cap W\subseteq Z(X)\cap \frac ee$.
\smallskip

Given an $e$-base $\Phi$ on $X$, let  $\Tau_\Phi$ be the topology on $X$, consisting of all sets $W\subseteq X$ such that for every $x\in W$ there exist a set $U\in\Phi$ such that $x\in\Lambdae(x;U)\subseteq W$. The topology $\Tau_\Phi$ will be referred to as {\em the topology generated by the $e$-base $\Phi$}.

The $e$-base $\Phi$ is {\em regular} if for every $b\in X$ with $b\ne be$ there exists a set $V\in\Phi$ such that $b\notin\frac{be}e\cdot V$.
\end{definition}

Observe that for an $e$-base $\Phi$ on a semigroup $X$, the constant function $\Phi_*$ assigning to each $x\in X$ the family $\Phi$ is an $e$-remote base on $X$ and the topology $\Tau_\Phi$ is equal to the topology $\Tau_{\Phi_*}$. The $e$-base $\Phi$ is regular if and only if the $e$-remote base $\Phi_*$ is regular.

The following theorem can be easily derived from Lemma~\ref{l:base-e}, Theorem~\ref{t:TS}, and Propositions~\ref{p:regular}, \ref{p:regular-powers}.

\begin{theorem}\label{t:TS-b} Let $X$ be a  semigroup, $e$ be a central idempotent in $X$ and $\Phi$ be an $e$-base on $X$. Then
\begin{enumerate}
\item $(X,\Tau_{\Phi})$ is a $T_0$ topological semigroup;
\item for every $x\in X$ the family $\{\Lambdae(x;U):U\in\Phi\}\subseteq\Tau_{\Phi}$ is a neighborhood base at $x$.
\item If the $e$-base $\Phi$ is regular, then the topological semigroup $(X,\Tau_{\Phi})$ is  zero-dimensional and for every $x\in X$ and $U\in\Phi$ the set $\Lambdae(x;U)$ is clopen in $(X,\Tau_\Phi)$.
\item The $e$-base $\Phi$ is regular if one of the following conditions is satisfied:
\begin{enumerate}
\item the poset $EZ(X)$ is well-founded and for every finite set $F\subseteq EZ(X)\setminus{\downarrow}e$, there exists a set $V\in\Phi$ such that $V\subseteq H(X)\cap Z(X)\setminus\pi^{-1}[{\uparrow}F]$;
\item the semigroup $X$ is nonsingular and eventually Clifford, the poset $E(X)$ is well-founded, and for every $n\in\IN$ and $F\in [E(X)\setminus{\downarrow}e]^{<\w}$, there exists a set $V\in\Phi$ such that $V\subseteq \{z^m:z\in  Z(X)\cap\frac ee \setminus\pi^{-1}[{\uparrow} F],\;m\ge n\}$.
\end{enumerate}
\end{enumerate}
\end{theorem}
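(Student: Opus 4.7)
The plan is to reduce Theorem~\ref{t:TS-b} to the corresponding statements for remote bases by turning the $e$-base $\Phi$ into an $e$-remote base via the constant assignment: set $\Phi_x \defeq \Phi$ for every $x \in X$ and let $\Phi_*=(\Phi_x)_{x\in X}$. The main preliminary check is that $\Phi_*$ satisfies Definition~\ref{d:remote}. The first axiom is immediate from the $e$-base property (taking $U=W$ in that definition also shows $W \subseteq Z(X) \cap \frac{e}{e}$ for every $W\in\Phi$). For the second axiom, given any $x,y \in X$ and $W \in \Phi$, I simply take $U=V=W$: the inclusion $V\subseteq W$ is trivial, while both $Uy\subseteq yW$ and $UbV\subseteq bW$ for $b\in\frac{y}{e}$ follow from the fact that elements of $W$ are central and that $W$ is a subsemigroup. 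For instance, for $u,v\in W$ we have $ubv=buv$ (since $ub=bu$) and $uv\in W$, so $ubv\in bW$.

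With this reduction in place, items (1) and (2) follow immediately from Theorem~\ref{t:TS}(1,2) and Lemma~\ref{l:base-e}, because $\Tau_\Phi = \Tau_{\Phi_*}$. Regularity of the $e$-base $\Phi$ coincides literally with regularity of $\Phi_*$ (since $\Phi_{be}=\Phi$), so for item (3) the Hausdorff zero-dimensionality comes from Theorem~\ref{t:TS}(4) applied with $\tau=\Tau_\Phi$. For the clopenness of $\Lambdae(x;U)$, openness is supplied by Lemma~\ref{l:base-e}, while closedness follows from Theorem~\ref{t:TS}(3): since $U\subseteq\frac{e}{e}$ by the $e$-base definition, one has $x\in\Lambdae(x;U)\subseteq\Lambdae(x;\frac{e}{e})$, which is exactly the hypothesis of that item.

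For item (4), each sub-case is a direct invocation of the corresponding remote-base proposition applied to $\Phi_*$. In case (b) the hypothesis translates verbatim to the hypothesis of Proposition~\ref{p:regular-powers}. In case (a) the hypothesis provides $V\in\Phi$ with $V\subseteq H(X)\cap Z(X)\setminus\pi^{-1}[{\uparrow} F]$; the small observation required is that $H(X)\cap Z(X)\subseteq H_Z(X)$, which follows from Lemma~\ref{l:ZH} (a central element lying in some $H_e$ forces $e$ to be central). Hence the hypothesis of Proposition~\ref{p:regular} is satisfied by $\Phi_*$, and regularity follows.

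There is no genuine obstacle in this argument: the entire proof is a clean translation from the general framework of remote bases to the simpler language of $e$-bases. The only step requiring a moment of care is the verification of axiom (2) of Definition~\ref{d:remote} for $\Phi_*$, where one must invoke both centrality of the elements of $W$ and closure of $W$ under multiplication at the same time; everything else is bookkeeping.
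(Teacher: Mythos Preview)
Your proposal is correct and follows exactly the approach of the paper: the paper explicitly defines the constant $e$-remote base $\Phi_*$, observes that $\Tau_\Phi=\Tau_{\Phi_*}$ and that regularity of $\Phi$ coincides with regularity of $\Phi_*$, and then states that the theorem ``can be easily derived from Lemma~\ref{l:base-e}, Theorem~\ref{t:TS}, and Propositions~\ref{p:regular}, \ref{p:regular-powers}.'' Your write-up supplies precisely those details, including the one nontrivial verification (axiom~(2) of Definition~\ref{d:remote} via centrality and the subsemigroup property) and the inclusion $H(X)\cap Z(X)\subseteq H_Z(X)$ from Lemma~\ref{l:ZH}.
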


Given a central idempotent $e$ in a semigroup $X$, consider the following families:
$$
\begin{aligned}
\mathcal E[e)&\defeq\big\{\tfrac ee\cap E(X)\cap Z(X)\setminus {\uparrow}F:F\in[EZ(X)\setminus {\downarrow}e]^{<\w}\big\};\\
\mathcal H[e)&\defeq\big\{\tfrac ee\cap H(X)\cap Z(X)\setminus \pi^{-1}[{\uparrow}F]:F\in[EZ(X)\setminus {\downarrow}e]^{<\w}\big\};\\
\mathcal Z[e)&\defeq\big\{\{z^{n}:z\in \tfrac ee\cap Z(X)\cap\pi^{-1}[EZ(X)\setminus {\uparrow}F]\}:n\in\IN,\;F\in [E(X)\setminus{\downarrow}e]^{<\w}\big\}.
\end{aligned}
$$

\begin{theorem}\label{t:TS-main} Let $X$ be a semigroup and $e$ be a central idempotent in $X$.
\begin{enumerate}
\item The families $\mathcal E[e)$, $\mathcal H[e)$ and $\mathcal Z[e)$ are $e$-bases for $X$ and hence $(X,\Tau_{\mathcal E[e)})$, $(X,\Tau_{\mathcal H[e)})$, and  $(X,\Tau_{\mathcal Z[e)})$ are $T_0$ topological semigroups.
\item If  the semilattice $EZ(X)$ is well-founded, then the families $\mathcal E[e)$ and $\mathcal H[e)$ are regular $e$-bases for $X$ and hence $(X,\Tau_{\mathcal E[e)})$ and $(X,\Tau_{\mathcal H[e)})$ are zero-dimensional  topological semigroups.
\item If the poset $E(X)$ is well-founded and the semigroup $X$ is nonsingular and eventually Clifford, then the family $\mathcal Z[e)$ is a regular $e$-base for $X$ and $(X,\Tau_{\mathcal Z[e)})$ is a zero-dimensional topological semigroup.
\end{enumerate}
\end{theorem}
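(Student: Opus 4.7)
The plan is to derive all three parts from Theorem~\ref{t:TS-b} by verifying that each of $\mathcal E[e)$, $\mathcal H[e)$, $\mathcal Z[e)$ is an $e$-base on $X$ (yielding part~(1)), and then checking the regularity criteria (4a) and (4b) of that theorem (yielding parts~(2) and~(3)).

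The central tool for the subsemigroup property is the following semilattice observation: if $x,y\in EZ(X)$ and $f\in E(X)$ satisfy $f\le xy$, then $f\le x$ and $f\le y$. Indeed $fx=f(xy)x=f(x^2y)=fxy=f$, and centrality of $x$ gives $xf=fx=f$. With this in hand, a member of $\mathcal E[e)$ is closed under products since central idempotents $x,y\in\tfrac ee\setminus{\uparrow}F$ produce a central idempotent $xy$ with $(xy)e=x(ye)=xe=e$ and, by the observation, with $xy\notin{\uparrow}F$. For $\mathcal H[e)$ I combine Lemmas~\ref{l:ZH}, \ref{l:HZ}, \ref{l:inverse}: elements of $H(X)\cap Z(X)$ sit inside the subsemigroup $H_Z(X)$, products of commuting Clifford elements are Clifford with $\pi(xy)=\pi(x)\pi(y)\in EZ(X)$, and the same observation excludes $\pi(xy)$ from ${\uparrow}F$. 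Directedness is handled by replacing $F_1,F_2$ with $F_1\cup F_2$; nonemptiness comes from the idempotent~$e$ (in the member for $F=\emptyset$), and the containment $U\cap W\subseteq Z(X)\cap\tfrac ee$ is built into the defining formulas.

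The family $\mathcal Z[e)$ requires additional care. Its members lie in $Z(X)\cap\tfrac ee$ because $z^ne=e$, and closure under products uses commutativity of the central generators to write $x^ny^n=(xy)^n$, after which $\pi(xy)=\pi(x)\pi(y)\in EZ(X)\setminus{\uparrow}F$ by the semilattice observation (applied now with possibly non-central $F\subseteq E(X)$, which is fine because only $x,y$ need be central). Directedness is the delicate point: given $V_1,V_2$ corresponding to $(n_1,F_1)$ and $(n_2,F_2)$, I take the member $V$ with parameters $(n_1n_2,\,F_1\cup F_2)$ and rewrite any $z^{n_1n_2}\in V$ as $(z^{n_2})^{n_1}$; Lemma~\ref{l:pi-well-defined} yields $\pi(z^{n_2})=\pi(z)\notin{\uparrow}F_1$, placing $z^{n_2}$ in the generator set for the $F_1$-member and so $z^{n_1n_2}\in V_1$, and symmetrically $z^{n_1n_2}\in V_2$. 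An application of Theorem~\ref{t:TS-b}(1) now gives part~(1).

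For part~(2) I invoke Theorem~\ref{t:TS-b}(4a) under the well-foundedness of $EZ(X)$: the member of $\mathcal H[e)$ for a finite $F\subseteq EZ(X)\setminus{\downarrow}e$ is by construction contained in $H(X)\cap Z(X)\setminus\pi^{-1}[{\uparrow}F]$, and the member of $\mathcal E[e)$ satisfies the same inclusion because $\pi$ fixes idempotents, so $V\cap\pi^{-1}[{\uparrow}F]=V\cap{\uparrow}F=\emptyset$; Theorem~\ref{t:TS-b}(3) then delivers zero-dimensionality. For part~(3), given $(n,F)$ with $F\in[E(X)\setminus{\downarrow}e]^{<\w}$, the member of $\mathcal Z[e)$ with the same parameters consists of powers $z^n$ where $z\in Z(X)\cap\tfrac ee\setminus\pi^{-1}[{\uparrow}F]$, so it sits inside the target set of Theorem~\ref{t:TS-b}(4b) with $m=n$, and Theorem~\ref{t:TS-b}(3) again applies. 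The main obstacle is the two verifications for $\mathcal Z[e)$: passing from $x^n,y^n$ to $(xy)^n$ and devising the $(n_1n_2,\,F_1\cup F_2)$ trick both hinge on the identity $\pi(z^k)=\pi(z)$ (a consequence of Lemma~\ref{l:pi-well-defined}) combined with the semilattice observation above; everything else reduces to quoting Theorem~\ref{t:TS-b}.
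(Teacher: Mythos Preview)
Your proof is correct and follows essentially the same route as the paper: verify that each of the three families is an $e$-base (subsemigroup property, directedness via $F_1\cup F_2$ and $n_1n_2$), then invoke Theorem~\ref{t:TS-b}(4a) and (4b) for regularity. Your explicit isolation of the ``semilattice observation'' ($f\le xy$ with $x,y\in EZ(X)$ implies $f\le x$) is exactly what the paper does inline in its case~1Z, and your $(n_1n_2,F_1\cup F_2)$ trick for the directedness of $\mathcal Z[e)$ matches the paper verbatim.
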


\begin{proof} 1E. To see that the family $\mathcal E[e)$ is an $e$-base, take any sets $A_1,A_2\in\mathcal E[e)$ and find finite sets $F_1,F_2\subseteq EZ(X)\setminus{\downarrow}e$ such that $A_i=\frac ee\cap EZ(X)\setminus {\uparrow}F_i$ for every $i\in\{1,2\}$. Observe that for the finite set $F=F_1\cup F_2\subseteq EZ(X)\setminus{\downarrow}e$, the set $A\defeq \frac ee\cap EZ(X)\setminus {\uparrow}F$ belongs to $\mathcal E[e)$ and is a subset of $A_1\cap A_2$. Since $EZ(X)$ is a semilattice, the set $A$ is a subsemigroup of $EZ(X)$. Then $AA\subseteq A\subseteq A_1\cap A_2\subseteq Z(X)\cap\frac ee$, witnessing that the family $\mathcal E[e)$ is an $e$-base.
\smallskip

1H.  To see that the family $\mathcal H[e)$ is an $e$-base, take any sets $A_1,A_2\in\mathcal H[e)$ and find finite sets $F_1,F_2\subseteq EZ(X)\setminus{\downarrow}e$ such that $A_i=\frac ee\cap H(X)\cap Z(X)\setminus \pi^{-1}[{\uparrow}F_i]$ for every $i\in\{1,2\}$. Observe that for the finite set $F=F_1\cup F_2\subseteq EZ(X)\setminus{\downarrow}e$, the set $A\defeq \frac ee\cap H(X)\cap Z(X)\setminus \pi^{-1}[{\uparrow}F]$ belongs to $\mathcal H[e)$ and is a subset of $A_1\cap A_2$. We claim that $A$ is a subsemigroup of $X$. By Lemma~\ref{l:ZH}, $H(X)\cap Z(X)=H(Z(X))$, by Lemma~\ref{l:inverse}, $H(Z(X))$ is a subsemigroup of $Z(X)$, and by Lemma~\ref{l:pi-homo}, the restriction $\pi{\restriction}_{H(Z(X))}:H(Z(X))\to EZ(X)$ is a homomorphism. Since the set $EZ(X)\setminus{\uparrow}F$ is a subsemigroup of $EZ(X)$ and $\pi{\restriction}_{H(Z(X))}$ is a homomorphism, the set $H(Z(X))\cap\pi^{-1}[{\uparrow}F]$ is a subsemigroup of $H(Z(X))$. It is easy to see that $\frac ee$ is a subsemigroup of $X$. Then $A=\frac ee\cap H(Z(X))\setminus\pi^{-1}[{\uparrow}F]$ is a subsemigroup of $H(Z(X))\subseteq Z(X)$ and hence  $AA\subseteq A\subseteq A_1\cap A_2\subseteq Z(X)\cap\frac ee$, witnessing that the family $\mathcal H[e)$ is an $e$-base.
\smallskip

1Z.   To see that the family $\mathcal Z[e)$ is an $e$-base, take any sets $A_0,A_1\in\mathcal Z[e)$ and find numbers $n_0,n_1\in\IN$ and finite sets $F_0,F_1\subseteq E(X)\setminus{\downarrow}e$ such that $A_i=\{z^{n_i}:z\in  Z(X)\cap\frac ee \cap\pi^{-1}[EZ(X)\setminus {\uparrow}F_i]\}$ for every $i\in\{0,1\}$. Observe that for the number $n=n_0n_1$ and the finite set $F=F_0\cup F_1\subseteq E(X)\setminus{\downarrow}e$, the set $A\defeq \{z^n:z\in  Z(X)\cap\frac ee \cap \pi^{-1}[EZ(X)\setminus {\uparrow}F]\}$ belongs to $\mathcal Z[e)$. We claim that $A\subseteq A_0\cap A_1$. Take any $a\in A$ and find $z\in Z(X)\cap\frac ee \cap\pi^{-1}[EZ(X)\setminus{\uparrow}F]$ such that $a=z^n$. It follows from $z\in \pi^{-1}[EZ(X)\setminus{\uparrow}F]\subseteq\korin{\infty}{H(X)}$ that for every $i\in \{0,1\}$ we have $z^{n_i}\in\korin{\infty}{H(X)}$ and $\pi(z^{n_i})=\pi(z)\in EZ(X)\setminus{\uparrow}F$. Since the set $ Z(X)\cap\frac ee $ is a subsemigroup of $X$, we obtain $z^{n_i}\in  Z(X)\cap\frac ee \cap\pi^{-1}[EZ(X)\setminus{\uparrow}F]\subseteq Z(X)\cap\frac ee \cap\pi^{-1}[EZ(X)\setminus{\uparrow}F_i]$. Taking into account that  $a=z^n=z^{n_0n_1}=(z^{n_i})^{n_{1-i}}$, we conclude that $a\in A_{1-i}$ for every $i\in\{0,1\}$. 

It remains to show that $A$ is a subsemigroup of $Z(X)$.
Given any elements $a,b\in A$, find elements $x,y\in  Z(X)\cap\frac ee \cap\pi^{-1}[EZ(X)\setminus{\uparrow}F\}$ such that $a=x^n$ and $b=y^n$. Then $ab=x^ny^n=(xy)^n$ as $x,y\in Z(X)$. It remains to prove that $xy\in  Z(X)\cap\frac ee \cap \pi^{-1}[EZ(X)\setminus {\uparrow}F]$. It follows from $x,y\in  Z(X)\cap\frac ee $ that $xy\in Z(X)\cap\frac ee $. By Lemma~\ref{l:ZH}, we have $H(X)\cap Z(X)=H(Z(X))$ and hence $x,y\in Z(X)\cap \pi^{-1}[EZ(X)\setminus {\downarrow}F]\subseteq Z(X)\cap \korin{\infty}{H(X)}=Z(X)\cap\korin{\infty}{H(Z(X))}$. By Lemma~\ref{l:pi-homo}, the set $Z(X)\cap \korin{\infty}{H(Z(X))}$ is a subsemigroup of $Z(X)$ and the restriction $\pi{\restriction}_{\korin{\infty}{H(Z(X))}}$ is a homomorphism. Then $xy\in Z(X)\cap \korin{\infty}{H(Z(X))}$ and $\pi(xy)=\pi(x)\pi(y)\in EZ(X)$. Assuming that $\pi(xy)\in{\uparrow}F$, we can find an idempotent $f\in F$ such that $\pi(xy)f=f\pi(xy)=f$ and then
$$f=\pi(xy)f=\pi(x)\pi(y)f=\pi(x)\pi(x)\pi(y)f=\pi(x)f=f\pi(x)$$as $\pi(x)\in EZ(X)$. Then $\pi(x)\in EZ(X)\cap {\uparrow}f$, which contradicts the choice of $x\in \pi^{-1}[EZ(X)\setminus{\uparrow}F]\subseteq \pi^{-1}[EZ(X)\setminus{\uparrow}f]$. This contradiction shows that $\pi(xy)\notin{\uparrow}F$ and hence $xy\in Z(X)\cap\frac ee \cap\pi^{-1}[EZ(X)\setminus{\uparrow}F]$ and $ab=(xy)^n\in A$.

Then $AA\subseteq A\subseteq A_1\cap A_2\subseteq Z(X)\cap\frac ee$, witnessing that $\mathcal Z[e)$ is an $e$-base.
\smallskip

By Theorem~\ref{t:TS-b}, $(X,\Tau_{\mathcal E[e)})$,  $(X,\Tau_{\mathcal H[e)})$ and  $(X,\Tau_{\mathcal Z[e)})$ are $T_0$ topological semigroups.
\smallskip

2. If the poset $EZ(X)$ is well-founded, then by Theorem~\ref{t:TS-b}(4a), the $e$-bases $\mathcal E[e)$ and $\mathcal H[e)$ are regular and by Theorem~\ref{t:TS-b}(3), the $T_0$ semigroup topologies  $\Tau_{\mathcal E[e)},\Tau_{\mathcal H[e)}$ are zero-dimensional.
\smallskip

3. If the poset $E(X)$ is well-founded and the semigroup $X$ is nonsingular and eventually Clifford, then by Theorem~\ref{t:TS-b}(4b), the $e$-base $\mathcal Z[e)$ is regular and by Theorem~\ref{t:TS-b}(3), the $T_0$ semigroup topology  $\Tau_{\mathcal Z[e)}$ is  zero-dimensional.
\end{proof}

Now we find a condition ensuring that the topologies $\Tau_{\mathcal E[e)}$, $\Tau_{\mathcal H[e)}$, $\Tau_{\mathcal Z[e)}$ are not discrete.

\begin{proposition}\label{p:nonisolated} If the central semilattice $EZ(X)$ of a semigroup $X$ is chain-finite and infinite, then some idempotent $e\in EZ(X)$ is a non-isolated point in the topologies $\Tau_{\mathcal E[e)}$, $\Tau_{\mathcal H[e)}$, and $\Tau_{\mathcal Z[e)}$.
\end{proposition}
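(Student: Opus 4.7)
The plan is to single out an idempotent $e\in EZ(X)$ whose up-set in the semilattice $EZ(X)$ contains infinitely many atoms (i.e., minimal elements of $({\uparrow}e\setminus\{e\})\cap EZ(X)$), and then exploit the fact that any two distinct atoms above $e$ multiply to $e$. This will furnish, for every basic neighborhood $\Lambdae(e;V)$ of $e$ in any of the three topologies, an atom $s\ne e$ lying in $\Lambdae(e;V)$.

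\emph{Step 1 (existence of a suitable $e$).} Fix an injective sequence $(b_n)_{n\in\IN}$ in $EZ(X)$ and form the partial products $c_n\defeq b_1b_2\cdots b_n$. In the meet-semilattice $EZ(X)$ the sequence $(c_n)$ is descending and therefore is a chain, so by chain-finiteness it takes only finitely many values; picking $N$ past the stabilization we obtain $e_0\defeq c_N$ with $b_n\ge e_0$ for all $n>N$, and in particular ${\uparrow}e_0\cap EZ(X)$ is infinite. Now suppose for contradiction that every element of $EZ(X)$ has only finitely many atoms above it. Since chain-finiteness implies well-foundedness, each element of ${\uparrow}e_0\setminus\{e_0\}$ lies above some atom of $e_0$, so by pigeonhole there exists an atom $e_1>e_0$ whose up-set in $EZ(X)$ is still infinite; iterating this yields an infinite strictly ascending chain $e_0<e_1<e_2<\cdots$, contradicting chain-finiteness. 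Hence some $e\in EZ(X)$ admits an infinite collection $\{s_i\}_{i\in\IN}$ of atoms above it.

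\emph{Step 2 (distinct atoms meet at $e$).} For $i\ne j$, the product $s_is_j\in EZ(X)$ satisfies $s_is_je=s_i(s_je)=s_ie=e$, so $s_is_j\in{\uparrow}e$, while $s_is_j\le s_i$ and $s_is_j\le s_j$. Atomicity of $s_i$ in $({\uparrow}e\setminus\{e\})\cap EZ(X)$ forces $s_is_j\in\{e,s_i\}$, and symmetrically $s_is_j\in\{e,s_j\}$; as $s_i\ne s_j$ we conclude $s_is_j=e$.

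\emph{Step 3 (non-isolation in all three topologies).} By Theorem~\ref{t:TS-b}(2), every basic neighborhood of $e$ has the form $\Lambdae(e;V)$ for some $V$ in the corresponding $e$-base; $V$ is parametrized by a finite set $F$ (contained in $EZ(X)\setminus{\downarrow}e$ or, for $\mathcal Z[e)$, in $E(X)\setminus{\downarrow}e$), together with an extra $n\in\IN$ in the case of $\mathcal Z[e)$. I claim some atom $s\in\{s_i\}$ avoids ${\uparrow}F$: otherwise pigeonhole produces $f\in F$ with $f\le s_i$ and $f\le s_j$ for distinct atoms $s_i,s_j$, whence $f=fs_is_j=fe\in{\downarrow}e$ by Step~2, contradicting $f\notin{\downarrow}e$. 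For such an atom $s$, the identities $s^n=s$ and $\pi(s)=s$ (valid for any idempotent) together with $s\in EZ(X)\cap\tfrac ee$ place $s\in V$ in each of the three families $\mathcal E[e)$, $\mathcal H[e)$, $\mathcal Z[e)$. Since $s\in\tfrac ee$ and $s\cdot s=s$, we get $s\in\tfrac ee\cdot V\subseteq\Lambdae(e;V)$, and $s\ne e$ witnesses $\Lambdae(e;V)\ne\{e\}$; therefore $e$ is not isolated in any of the three topologies.

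The principal difficulty is Step~1 — extracting an idempotent with infinitely many atoms above — which rests on combining chain-finiteness (both to stabilize the partial products and to rule out the infinite ascending chain) with well-foundedness (to decompose an up-set into atom up-sets). Once such an $e$ is in hand, Steps~2 and 3 are short, uniform semilattice calculations that handle all three topologies at once.
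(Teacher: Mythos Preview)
Your proof is correct and takes a genuinely different route from the paper's. The paper selects $e$ to be \emph{maximal} in $EZ(X)$ among idempotents with infinite up-set (the smallest element of $EZ(X)$ witnesses that such elements exist), and then shows directly that for each $f\in F$ the sub-semilattice $S_f=EZ(X)\cap{\uparrow}e\cap{\uparrow}f$ is finite: its minimum $s$ satisfies $s>e$ (else $f\le e$), so by maximality of $e$ the up-set of $s$, hence $S_f$, is finite; thus $EZ(X)\cap\tfrac ee\setminus{\uparrow}F$ is infinite. You instead extract an $e$ with infinitely many \emph{atoms} above it, prove the product identity $s_is_j=e$ for distinct atoms, and use pigeonhole to see that at most $|F|$ atoms can lie in ${\uparrow}F$. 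The paper's argument is a touch shorter and yields the stronger conclusion that every basic neighborhood of $e$ is infinite (not merely $\ne\{e\}$); your atom calculation is more explicitly combinatorial and handles all three $e$-bases uniformly via the single observation $s^n=s=\pi(s)$. Incidentally, the paper's maximal $e$ also has infinitely many atoms above it (each ${\uparrow}a$ for an atom $a>e$ is finite, yet their union is ${\uparrow}e\setminus\{e\}$), so your choice of $e$ is in a sense more general.
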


\begin{proof} Assume that the semilattice $EZ(X)$ is chain-finite and infinite.   Observe that the set $$I\defeq\{e\in EZ(X):\mbox{$EZ(X)\cap {\uparrow}e$ is infinite}\}$$ contains the smallest element of the chain-finite semilattice $EZ(X)$ and hence $I$ is not empty. Since $EZ(X)$ is chain-finite, there exists an idempotent $e\in I$ such that $I\cap {\uparrow}e=\{e\}$. We claim that $e$ is non-isolated in the topologies $\Tau_{\mathcal E[e)}$, $\Tau_{\mathcal H[e)}$, and $\Tau_{\mathcal Z[e)}$. Indeed, given any neighborhood $O_e$ of $e$ in one of these topologies, we can find a finite set $F\subseteq E(X)\setminus{\downarrow}e$ such that $\frac ee\cap EZ(X)\setminus{\uparrow}F\subseteq O_e$. It remains to prove that the set $\frac ee\cap EZ(X)\setminus{\uparrow}F\subseteq O_e$ is infinite. First we show that for every $f\in F$, the set $S_f\defeq EZ(X)\cap {\uparrow}e\cap{\uparrow}f$ is finite. This is clear if $S_f$ is empty. So, assume that $S_f\ne\emptyset$. Being a subsemilattice of the chain-finite semilattice $EZ(X)$, the semilattice $S_f$ has the smallest element $s\in S_f\subseteq {\uparrow}e\cap{\uparrow}f$. Assuming that $s=e$, we conclude that $f\in{\downarrow}s={\downarrow}e$, which contradicts the choice of $f\in F\subseteq E(X)\setminus {\downarrow}e$. Therefore, $s\in{\uparrow}e\setminus\{e\}$. The maximality of $e$ ensures that the set $EZ(X)\cap {\uparrow}s\supseteq S_f$ is finite. Then the set $EZ(X)\cap\frac ee\setminus{\uparrow}F=EZ(X)\cap {\uparrow}e\setminus{\uparrow}F=EZ(X)\cap {\uparrow}e\setminus\bigcup_{f\in F}S_f$ is infinite and so is the set $O_e\supseteq EZ(X)\cap\tfrac ee\setminus {\uparrow}F$.
\end{proof}

\begin{theorem}\label{t:topologizable} A semigroup $X$ is $\mathsf{T_{\!z}S}$-topologizable if its central semilattice $EZ(X)$ is chain-finite and infinite.
\end{theorem}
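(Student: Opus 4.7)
The plan is to exhibit, on $X$ itself, an explicit non-discrete Tychonoff zero-dimensional semigroup topology; then the identity map serves as the required injective homomorphism witnessing $\mathsf{T_{\!z}S}$-topologizability. The heavy machinery developed in Sections~\ref{s:Lambda}--\ref{s:topology} does essentially all of the work.

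First, since every chain-finite poset is well-founded (as noted in Section~\ref{s:prelim}), the hypothesis gives that the central semilattice $EZ(X)$ is well-founded. Fix any central idempotent $e\in EZ(X)$. Theorem~\ref{t:TS-main}(1,2) then provides that the family $\mathcal E[e)$ is a regular $e$-base on $X$ and that $(X,\Tau_{\mathcal E[e)})$ is a $T_0$ zero-dimensional topological semigroup. A zero-dimensional $T_0$ space is Hausdorff (as noted in the proof of Theorem~\ref{t:TS}(4)), and a zero-dimensional Hausdorff space is Tychonoff, so $(X,\Tau_{\mathcal E[e)})$ lies in the class $\mathsf{T_{\!z}S}$.

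Second, I would invoke Proposition~\ref{p:nonisolated}: the assumption that $EZ(X)$ is both chain-finite and infinite yields a particular idempotent $e\in EZ(X)$ that is a non-isolated point of the topology $\Tau_{\mathcal E[e)}$. Choosing this specific $e$ in the construction above, the resulting topology is not discrete.

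Combining these facts, the identity map $X\to (X,\Tau_{\mathcal E[e)})$ is an injective continuous homomorphism from the discrete semigroup $X$ into a topological semigroup in $\mathsf{T_{\!z}S}$ whose image fails to be discrete. By definition this negates $\mathsf{T_{\!z}S}$-discreteness, so $X$ is $\mathsf{T_{\!z}S}$-topologizable. There is no genuine obstacle: the theorem is a direct packaging of Theorem~\ref{t:TS-main} and Proposition~\ref{p:nonisolated}, with the only minor verification being that zero-dimensional $T_0$ topological semigroups belong to $\mathsf{T_{\!z}S}$.
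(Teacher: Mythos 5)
Your proposal is correct and follows essentially the same route as the paper: the author likewise combines Proposition~\ref{p:nonisolated} (to get a non-isolated idempotent $e$ from chain-finiteness plus infiniteness of $EZ(X)$) with Theorem~\ref{t:TS-main}(2) (well-foundedness giving that $(X,\Tau_{\mathcal E[e)})$ is a Hausdorff zero-dimensional topological semigroup). Your extra remark that a zero-dimensional $T_0$ space is Hausdorff, hence Tychonoff, is exactly the justification implicit in the paper's appeal to Theorem~\ref{t:TS}(4).
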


\begin{proof} Assume that the semilattice $EZ(X)$ is chain-finite and infinite. Since the semilattice $EZ(X)$ is chain-finite, the poset $EZ(X)$ is well-founded. By Proposition~\ref{p:nonisolated}, some idempotent $e$ is non-isolated in the topology $\Tau_{\mathcal E[e)}$. By Theorem~\ref{t:TS-main}(2), $(X,\Tau_{\mathcal E[e)})$ is a Hausdorff zero-dimensional topological semigroup, witnessing that the semigroup $X$ is $\mathsf{T_{\!z}S}$-topologizable.
\end{proof}






\section{Proof of Theorem~\ref{t:main-iT1}}\label{s:main-iT1}

Given a semigroup $X$, we need to prove the implications $(1)\Ra(2)\Leftrightarrow(3)\Ra(4)\Ra(5)\Ra(6)\Leftrightarrow(7)$ of the conditions:
\begin{enumerate}
\item $X$ is commutative, bounded, nonsingular and Clifford-finite;
\item $X$ is injectively $\mathsf{T_{\!1}S}$-closed;
\item $X$ is $\mathsf{T_{\!1}S}$-closed and $\mathsf{T_{\!1}S}$-discrete;
\item $X$ is $\mathsf{T_{\!z}S}$-closed and $\mathsf{T_{\!z}S}$-discrete;
\item $X$ is $\mathsf{T_{\!z}S}$-closed and $Z(X)$ is Clifford-finite;
\item $Z(X)$ is bounded, nonsingular and Clifford-finite;
\item $Z(X)$ is injectively $\mathsf{T_{\!1}S}$-closed.
\end{enumerate}
\bigskip

$(1)\Ra(2)$: Assume that $X$ is commutative, bounded, nonsingular and Clifford-finite. To prove that $X$ is injectively $\mathsf{T_{\!1}S}$-closed, it suffices to show that $X$ is closed in any $T_1$ topological semigroup $Y$ that contains $X$ as a subsemigroup. Let $\pi:X\to E(X)$ be the map assigning to each $x\in X$ the unique idempotent $\pi(x)\in E(X)$ such that $x\in H_{\pi(x)}$. By Lemma~\ref{l:pi-homo}, $\pi$ is a homomorphism and hence for every $e\in E(X)$ the preimage $\pi^{-1}(e)$ is a unipotent semigroup. By Theorem~\ref{t:unipotent}, the bounded, nonsingular, group-finite commutative unipotent semigroup $\pi^{-1}(e)$ is closed in $Y$. Since the set $E(X)\subseteq H(X)$ is finite, the set $X=\bigcup_{e\in E(X)}\pi^{-1}(e)$ is closed in $Y$, being the union of finitely many closed sets.
\smallskip

The equivalence $(2)\Leftrightarrow (3)$ is proved in Proposition 3.2 of \cite{BB2}.
\smallskip

The implication $(3)\Ra(4)$ is trivial and follows immediately from the inclusion $\mathsf{T_{\!z}S}\subseteq\mathsf{T_{\!1}S}$.
\smallskip

$(4)\Ra(5)$: Assume that $X$ is $\mathsf{T_{\!z}S}$-closed and $\mathsf{T_{\!z}S}$-discrete. By Theorem~\ref{t:center}, the central semilattice $EZ(X)$ is chain-finite and by Theorem~\ref{t:topologizable}, $EZ(X)$ is finite. By Theorem~\ref{t:center}, the semigroup $Z(X)$ is group-finite. By Lemma~\ref{l:ZH}, for every idempotent $e\in EZ(X)$ the intersection $H_e\cap Z(X)$ is a subgroup of $Z(X)$. Since $Z(X)$ is group-finite, the subgroup $H_e\cap Z(X)$ is finite. Then $H(Z(X))=\bigcup_{e\in EZ(X)}(H_e\cap Z(X))$ is finite, which means that the semigroup $Z(X)$ is Clifford-finite.
\smallskip

$(5)\Ra(6)$: Assume that $X$ is $\mathsf{T_{\!z}S}$-closed and $Z(X)$ is Clifford-finite. By Theorem~\ref{t:center}, the semigroup $Z(X)$ is periodic and nonsingular.
Let $\pi_Z:Z(X)\to EZ(X)$ be the map assigning to each $x\in X$ the unique idempotent $\pi_Z(x)\in x^\IN$. By Lemma~\ref{l:pi-homo}, the map $\pi_Z$ is a homomorphism. Then for every $e\in EZ(X)$ the preimage $\pi_Z^{-1}(e)\subseteq Z(X)$ is a unipotent commutative semigroup. Since $Z(X)$ is periodic, nonsingular and Clifford-finite, for every $e\in EZ(X)$, the unipotent commutative semigroup $\pi_Z^{-1}(e)$ is chain-finite, periodic, nonsingular and group-finite. By Theorem~\ref{t:C-closed}, $\pi^{-1}(e)$ is $\mathsf{T_{\!1}S}$-closed and by Theorem~\ref{t:unipotent-C}, $\pi_Z^{-1}(e)$ is bounded.  Since $Z(X)$ is Clifford-finite, the semilattice $EZ(X)$ is finite and then the semigroup $Z(X)=\bigcup_{e\in EZ(X)}\pi_Z^{-1}(e)$ is bounded, being a finite union of bounded subsemigroups.
\smallskip

The equivalence $(6)\Leftrightarrow(7)$ follows from the implications $(1)\Ra(2)\Ra(6)$ and the obvious equality $Z(Z(X))=Z(X)$.

\section{Proof of Theorem~\ref{t:main-aT1}}\label{s:main-aT1}

Given a semigroup $X$, we need to prove the implications $(1)\Ra(2)\Leftrightarrow(3)\Ra(4)\Leftrightarrow(5)\Ra(6)\Ra(7)\Ra(8)\Leftrightarrow(9)$ of the conditions:
\begin{enumerate}
\item $X$ is finite;
\item $X$ is absolutely $\mathsf{T_{\!1}S}$-closed;
\item $X$ is projectively $\mathsf{T_{\!1}S}$-closed and projectively $\mathsf{T_{\!1}S}$-discrete;
\item $X$ is projectively $\mathsf{T_{\!1}S}$-closed and injectively $\mathsf{T_{\!1}S}$-closed;
\item $X$ is projectively $\mathsf{T_{\!1}S}$-closed and $\mathsf{T_{\!1}S}$-discrete;
\item $X$ is ideally $\mathsf{T_{\!z}S}$-closed and $\mathsf{T_{\!z}S}$-discrete;
\item $X$ is ideally $\mathsf{T_{\!z}S}$-closed and the semigroup $Z(X)$ is Clifford-finite;
\item $Z(X)$ is finite;
\item $Z(X)$ is absolutely $\mathsf{T_{\!1}S}$-closed.
\end{enumerate}
\bigskip

The implication $(1)\Ra(2)$ is trivial, the equivalence $(2)\Leftrightarrow(3)$ is proved in Proposition 3.3 of \cite{BB2}, the implication $(2)\Ra(4)$ is trivial, the equivalence $(4)\Leftrightarrow(5)$ follows from the equivalence $(2)\Leftrightarrow(3)$ in Theorem~\ref{t:main-iT1}; the implication $(5)\Ra(6)$ is trivial,  and $(6)\Ra(7)$ follows from the implication $(4)\Ra(5)$ of Theorem~\ref{t:main-iT1}.
\smallskip

$(7)\Ra(8)$: Assume that $X$ is ideally $\mathsf{T_{\!z}S}$-closed and $Z(X)$ is Clifford-finite. By Theorem~\ref{t:center}, the semigroup $Z(X)$ is periodic. Let $\pi_Z:Z(X)\to EZ(X)$ be the map assigning to each $x\in Z(X)$ the unique idempotent in the monogenic semigroup $x^\IN$. By Lemma~\ref{l:pi-homo}, $\pi_Z$ is a homomorphism and hence for every $e\in EZ(X)$ the preimege $\pi_Z^{-1}(e)=\korin{\infty}{e}$ is a unipotent subsemigroup of $Z(X)$. By Lemma~\ref{l:ZH}, the intersection $H_e\cap Z(X)$ is a subgroup of $Z(X)$. Since $Z(X)$ is Clifford-finite, the subgroup $H_e\cap Z(X)$ is finite. Since $X$ is ideally $\mathsf{T_{\!z}S}$-closed, we can apply Lemma~7.5 of \cite{BB} and conclude that the set $\pi_Z^{-1}(e)\setminus H_e$ is finite and so is the semigroup $\pi^{-1}(e)=(\pi_Z^{-1}(e)\setminus H_e)\cap (H_e\cap Z(X))$. Then the set $Z(X)=\bigcup_{e\in EZ(X)}\pi_Z^{-1}(e)$ is finite.
\smallskip

The equivalence $(8)\Leftrightarrow(9)$ follows from the implications $(1)\Ra(2)\Ra(8)$ and the trivial equality $Z(Z(X))=Z(X)$.

\end{document}